\newtheorem{theorem}{Theorem}[section]
\newtheorem{lemma}[theorem]{Lemma}
\newtheorem{proposition}[theorem]{Proposition}
\newtheorem{definition}[theorem]{Definition}
\newtheorem{remark}[theorem]{Remark}
\DeclareMathOperator{\asin}{asin}  
\DeclareMathSymbol{,}{\mathord}{letters}{"3B}  
\def\1{\mathds{1}} 
\def\abs#1{\left\lvert#1\right\rvert} 
\def\of#1{\left(#1\right)} 
\def\inbrackets#1{\of{#1}} 
\def\setof#1{\left\{#1\right\}} 
\renewcommand\d[1]{\ensuremath{\;\mathrm{d}#1\@ifnextchar\d{\!}{}}} 
\newcommand\nopagebreakhere{\par\nobreak\@afterheading}  
\newcommand{\bsx}{\boldsymbol{x}}
\newcommand{\de}{\mathrm{\,d}}
\newcommand{\cmark}{\color{green}{\ding{51}}\color{black}}%
\newcommand{\xmark}{\color{red}{\ding{55}}\color{black}}%
\newcommand{\gnearrow}{\color{green}{\nearrow}\color{black}}%
\newcommand{\gsearrow}{\color{green}{\searrow}\color{black}}%
\newcommand{\gdownarrow}{\color{green}{\downarrow}\color{black}}%
\newcommand{\guparrow}{\color{green}{\uparrow}\color{black}}%
\newcommand{\qmark}{\textbf{?}}%
\newcommand{\eqd}{\stackrel{\mathrm{d}}=}
\newcommand{\E}{\mathbb{E}}
\newcommand{\R}{\mathbb{R}}
\newcommand{\cC}{\mathcal{C}}
\providecommand{\keywords}[1]{\textbf{Keywords } #1}
\newcommand{\TP}{\mathrm{TP_2}}
\newcommand{\conv}{{\scalebox{1.5}{\raisebox{-0.2ex}{$\ast$}}}}
\newcommand{\Ran}{\mathsf{Ran}}
\newcommand{\rank}{\mathsf{rank}}
\newcommand{\Var}{\mathrm{Var}}
\author{Jonathan Ansari$^1$ and Marcus Rockel$^{2}$}
\title{Dependence properties of bivariate copula families}
\begin{document}
\maketitle
\vspace{-5ex}

\begin{center}
    \small\textit{
		$\phantom{2}^1$Department of Artificial Intelligence and Human Interfaces,\\
		University of Salzburg,\\
		Hellbrunner Straße 34, 5020 Salzburg, Austria,\\
        jonathan.ansari@plus.ac.at \\[2mm]
	}
	\small\textit{
		$\phantom{2}^2$Department of Quantitative Finance,\\
		Institute for Economics, University of Freiburg,\\
		Rempartstr. 16,	79098 Freiburg, Germany,\\
        marcus.rockel@finance.uni-freiburg.de \\[2mm]
	}
\end{center}
\begin{abstract}{
    Motivated by recently investigated results on dependence measures and robust risk models, this paper provides an overview of dependence properties of many well-known bivariate copula families, where the focus is on the Schur order for conditional distributions, which has the fundamental property that minimal elements characterize independence and maximal elements characterize perfect directed dependence.
    We give conditions on copulas that imply the Schur ordering of the associated conditional distribution functions.
    For extreme-value copulas, we prove the equivalence of the lower orthant order, the Schur order for conditional distributions, and the pointwise order of the associated Pickands dependence functions.
    Further, we provide several tables and figures that list and illustrate various positive dependence and monotonicity properties of copula families, in particular, from classes of Archimedean, extreme-value, and elliptical copulas.
    Finally, for Chatterjee's rank correlation, which is consistent with the Schur order for conditional distributions, we give some new closed-form formulas in terms of the parameter of the underlying copula family.
 }
 \end{abstract}
 \vspace{0ex}
 \keywords{Archimedean copula, Chatterjee's rank correlation, concordance, conditionally increasing, dependence measure, elliptical copula, extreme-value copula, Kendall's tau, Schur order, Spearman's rho, \(\TP\)}
  \maketitle
\tableofcontents
\section{Introduction}
In recent years, there is an increasing number of scientific papers on dependence measures (a.k.a. measures of predictability or measures of regression dependence), i.e., on functionals \(\kappa\) of random vectors \((X,Y)\) satisfying the properties that \(\kappa(Y|X)\) only attains values in the interval \([0,1]\,,\) where the values \(0\) and \(1\) characterize independence and perfect directed dependence, respectively, meaning that \(\kappa(Y|X)=0\) if and only if \(X\) and \(Y\) are independent and \(\kappa(Y|X)=1\) if and only if there exists some Borel measurable (not necessarily increasing or decreasing) function \(f\) such that \(Y=f(X)\), see, e.g., \cite{dette2013copula,fuchs2021quantifying,Gamboa-2022,fgwt2021,Junker-2021,Nies-2021,Shih-2021,Sungur-2005,Trutschnig-2011,Wiesel-2022}.
The certainly most famous such measure is Chatterjee's rank correlation
\begin{align}\label{eqchatt}
    \xi(Y|X) := \frac{\int_\R \Var(P(Y\geq y\mid X)) \de P^Y(y) }{\int_\R \Var(\1_{\{Y\geq y\}}) \de P^Y(y)}\,,
\end{align}
which takes a simple form, has a fast estimator and allows interesting applications, such as a model-free, dependence based forward feature selection, see \cite{Ansari-Fuchs-2022,deb2020b,chatterjee2020,chatterjee2021}.
A large class of measures of predictability \(\kappa\) is based on an ordering \(\prec\) that satisfies the axioms
\begin{enumerate}[({A}1)]
    \item \label{ax1} Characterization of independence: \((Y|X)\prec (Y'|X')\) for all \((X',Y')\) with \(Y'\stackrel{\mathrm{d}}= Y\) if and only if \(X\) and \(Y\) are independent,
    \item \label{ax2} Characterization of perfect directed dependence: \((Y'|X')\prec (Y|X)\) for all \((X',Y')\) with \(Y'\stackrel{\mathrm{d}}= Y\) if and only if \(Y\) is perfectly dependent on \(X\,,\)
    \item \label{ax3} Consistency with \(\kappa\,:\) \((Y|X)\prec (Y'|X')\) implies \(\kappa(Y|X)\leq \kappa(Y'|X')\,,\)
\end{enumerate}
where \(\eqd\) denotes equality in distribution.
An interesting such ordering, which reflects, in particular, the fundamental properties of Chatterjee's rank correlation, is the Schur order for conditional distributions in \eqref{defschurconddist}, see \cite{Ansari-Fuchs-2022,Ansari-Fuchs-2023}.
It is a rearrangement-invariant order that compares the variability of conditional distribution functions in the conditioning variable, where small variability means low predictability and large variability corresponds to a high determination of \(Y\) given \(X\,.\)
Some related global dependence stochastic orders based on the variability of conditional expectations and conditional variances are studied in \cite{Shaked-2013}. For similar dependence orders which are, however, not rearrangement-invariant in the conditioning variable, we refer to \cite{Averous-2000,Colangelo-2008,dette2013copula,Hollander-1990,Joe-2018,Yanagimoto-1969}.\\
The Schur order for conditional distributions and so Chatterjee's rank correlation, which both can be extended to multivariate vectors of input variables, 
are merely rank-based concepts and depend in the case of continuous marginal distributions only on the underlying copula.
More precisely, they are
fully described by stochastically increasing\footnote{The concept of a 'stochastically increasing' bivariate copula \(C\) often used in the literature is more accurately denoted as 'conditionally increasing in sequence' since for \((U,V)\sim C\,,\) the conditional distribution \(V|U=u\) is stochastically increasing in \(u\,,\) which, however, is not a symmetric concept, see Definition \ref{defposdep}\eqref{defposdep2} and \cite{Mueller-Stoyan-2002}.} 
bivariate copulas, for which a pointwise comparison is equivalent to the comparison in the sense of the Schur order, see \cite[Proposition 3.4]{Ansari-Fuchs-2022}. \\
The Schur order for conditional distributions also applies to recently studied comparison results for \(\conv\)-products of several bivariate copulas modeling the dependence structure of conditionally independent factor models.
Comparison results for large classes of such models with respect to the strong notion of supermodular order are given in \cite{Ansari-Rueschendorf-2022} allowing applications in risk analysis when some structural assumptions on the underlying distribution are imposed.
Risk bounds for these models are specified by a set of marginal distributions and a set of stochastically increasing or \(\TP\)-copulas, which both are concepts of positive dependence. \\
Motivated by the above-mentioned applications, in this paper we investigate positive dependence and ordering properties for members of various well-known bivariate copula families with the aim to provide a concise overview of their dependence properties. More specifically, we determine for copulas, in particular from classes of Archimedean, extreme-value and elliptical distributions, whether they are conditionally increasing/decreasing, $\TP$ or neither.
For this, we either cite references, verify well-known characterizations from the literature (e.g., \cite{Durante-2016,Mueller-2005,Nelsen-2006}) or use direct calculations.
Further, we determine whether the copulas are increasing or decreasing in their parameter with respect to the lower orthant order and Schur order for conditional distributions.
For classes of extreme-value distributions, we prove that the latter orderings are equivalent and can also be characterized by the pointwise ordering of the associated Pickands dependence functions.
While measures of concordance such as Kendall's tau and Spearman's rho are consistent with respect to the lower orthant ordering of copulas, various measures of predictability such as Chatterjee's rank correlation are consistent with respect to the Schur order for conditional distributions, see \cite{Ansari-Fuchs-2023}.
For the aforementioned three measures of association, we illustrate their behavior in dependence on the copula family parameters in several plots and also provide some new closed-form formulas. \\
The remainder of this paper is organized as follows: Section \ref{sec2} provides the necessary tools for analyzing bivariate dependencies.
In Section \ref{secdeppropbivcopfam}, we focus on ordering results with respect to the Schur order for conditional distributions and provide several tables and figures which give a concise overview of the dependence properties of more than \(35\) bivariate copula families.
The often tedious calculations are all deferred to the appendix.

\section{Basic concepts of dependence modeling}\label{sec2}

In this section, we provide the main tools for modeling bivariate dependence structures.
First, we give the definition of a copula and consider the well-known classes of Archimedean, extreme-value and elliptical copulas.
Then, we introduce the stochastic orderings and dependence concepts which we make use of.
Finally, we consider some specific measures of association.
For multivariate extensions of all these concepts, we refer to the literature on dependence modeling, see, e.g., \cite{Durante-2016,Nelsen-2006}.

\subsection{Copulas}

A bivariate copula is a function \(C\colon [0,1]^2\to [0,1]\) that is grounded, \(2\)-increasing and that has uniform marginals, i.e., \(C(u)=0\) for \(u=(u_1,u_2)\) whenever \(u_1=0\) or \(u_2=0\,,\) \(C(u_1,u_2)+C(v_1,v_2)-C(u_1,v_2)-C(v_1,u_2)\geq 0\) for all $u_1,u_2,v_1,v_2\in [0,1]$ with \(u_1\leq v_1\) and \(u_2\leq v_2\,,\) and \(C(u_1,u_2)=u_i\) for all $(u_1,u_2)\in [0,1]^2$ and \(i\in \{1,2\}\) whenever \(u_j=1\) for \(j\neq i\,.\)
The motivation to consider copulas comes from Sklar's theorem, see, e.g., \cite[Theorem 2.3.3]{Nelsen-2006}, which states that every bivariate distribution function \(F\colon \R^2\to [0,1]\) can be decomposed into its marginal distribution functions \(F_1\) and \(F_2\) and a copula \(C\) such that
\begin{align}\label{eqsklar}
    F(x)=C(F_1(x_1),F_2(x_2))\quad \text{for all } x=(x_1,x_2)\in \R^2\,.
\end{align}
The copula \(C\) is uniquely determined on \(\Ran(F_1)\times \Ran(F_2)\,,\) where \(\Ran(F_i)\) denotes the range of \(F_i\,.\)
Further, for any bivariate copula \(C\) and for all univariate distribution functions \(F_1\) and \(F_2\,,\) the right-hand side of \eqref{eqsklar} defines a bivariate distribution function.
Denote by \(\cC_2\) the class of bivariate copulas.
In the following, we consider some well-known subclasses of \(\cC_2\,.\)

\subsubsection{Archimedean copulas}
\label{subsubsec:Arch}

Let \(\varphi\colon [0,1]\to [0,\infty]\) be a continuous, strictly decreasing function such that \(\varphi(1)=0\,.\)
Define the pseudo-inverse \(\psi\colon [0,\infty]\to [0,1]\) by \(\psi(t) := \varphi^{-1}(t)\) if \(0\leq t \leq \varphi(0)\) and by \(\psi(t) := 0\) if \(\varphi(0)< t \leq \infty\,.\)
Then, the function \(C_\varphi\colon [0,1]^2\to [0,1]\) defined by
    \begin{align*}
        C_\varphi(u,v) = \psi(\varphi(u)+\varphi(v))
    \end{align*}
is a bivariate copula if and only if \(\varphi\) is convex, see, e.g., \cite[Theorem 4.1.4]{Nelsen-2006}.
For such convex \(\varphi\,,\) the copula \(C_\varphi\) is denoted as \emph{Archimedean copula} with \emph{generator} \(\varphi\,.\)
In Section \ref{secdeppropArch}, we provide various dependence properties of the Archimedean copula families given in \cite[Chapter 4]{Nelsen-2006}.

\subsubsection{Extreme-value copulas}
\label{subsubsec:EVC}

Let \(A\colon [0,1]\to [1/2,1]\) be a convex function that satisfies the constraints \(\max\{t,1-t\}\leq A(t)\leq 1\) for all \(t\in [0,1]\,.\)
Then, a bivariate copula \(C = C_A\) is an \emph{extreme-value copula} generated by \emph{Pickands dependence function} \(A\,,\) if
\begin{align}\label{defEVC}
    C_A(u,v)=\exp\left( \ln(uv) A\Big(\frac{\ln v}{\ln u + \ln v}\Big) \right) \quad\text{for all } (u,v)\in (0,1)^2\,,
\end{align}
see, e.g., \cite[Theorem 6.6.7]{Durante-2016}.
We study dependence properties of several extreme-value copula families in Section \ref{secdeppropEVC}.

\subsubsection{Elliptical copulas}
\label{subsubsec:elliptical}

A bivariate random vector \(X=(X_1,X_2)\) follows an elliptical distribution centered at \(\mu\in \R^2\) if the characteristic function of \(X-\mu\) is a function of a quadratic form, i.e., if \(\varphi_{X-\mu}(t) = \phi(t^T\Sigma t)\) for all \(t\in \R^2\) for some positive semi-definite matrix \(\Sigma\in \R^{2\times 2}\) and some \emph{characteristic generator} \(\phi\colon [0,\infty)\to [0,\infty)\,.\)
For given \(\phi\) and for \(\rho\in [-1,1]\,,\) setting w.l.o.g.\@ \(\mu=0\) and \(\Sigma = \left(\begin{smallmatrix}
    1 & \rho \\
    \rho & 1
\end{smallmatrix}\right)\,,\)
any copula implicitly obtained from Sklar's theorem by the distribution function of \(X\) via \eqref{eqsklar} is denoted as \emph{elliptical copula} with parameter \(\rho\) and generator \(\phi\,,\) see, e.g., \cite[Section 6.7]{Durante-2016}.

The random vector \(X\) admits a stochastic representation given by
\begin{align*}
    X = \mu + R A U
,\end{align*}
where \(R\) is a non-negative random variable, \(A\in \R^{2\times k}\) is a matrix such that \(\Sigma = AA^T\) for \(k=\rank(\Sigma)\,,\) and where \(U =(U_1,U_2)\) is a bivariate random vector that is independent of \(R\) and uniformly distributed on the \(2\)-sphere, i.e., on the unit circle \(\{(x,y)\mid x^2+y^2=1\}\,.\)
If \(X\) has a Lebesgue-density, \(\rho\in (-1,1)\,,\) and \(P(X=0)=0\,,\) then the \emph{radial part} \(R\) admits a Lebesgue-density \(g\,,\) see, e.g., \cite[Section 2.6]{Fang-1990}.
In Section \ref{secpropellcop}, we study properties of elliptical copulas with \emph{density generator} \(g\) and parameter \(\rho\,.\)

\subsection{Stochastic orderings}\label{secstoo}

For comparing dependencies, orderings on the set of copulas are useful.
In the first part of this section, we consider the lower orthant (i.e., the pointwise) ordering of copulas.
In the second part, we introduce to the recently studied rearrangement-based orderings of copulas.

\subsubsection{Orthant orderings}

The certainly most popular ordering on the set of bivariate copulas is the lower orthant order which is defined by the pointwise comparison of bivariate copulas as follows.

\begin{definition}[Lower orthant order]\label{defi:lower_orthant_order}~\\
    Let \(D\) and \(E\) be bivariate copulas.
    Then \(D\) is said to be smaller than \(E\) with respect to the \emph{lower orthant order}, written \(D\leq_{lo} E\,,\) if \(D(u,v)\leq E(u,v)\) for all \((u,v)\in [0,1]^2\,.\)
\end{definition}

The uniquely determined maximal and minimal elements in the class of bivariate copulas are the upper and lower Fr\'{e}chet copula \(M\) and \(W\,,\) respectively, defined by \(M(u,v):=\min\{u,v\}\) and \(W(u,v):=\max\{u+v-1,0\}\) for \((u,v)\in [0,1]^2\,.\)
The upper (lower) Fr\'{e}chet copula models comonotonicity (countermonotoncity), i.e., for random variables \(U\) and \(V,\) it holds \(C_{U,V}=M\) (\(C_{U,V}=W\)) if and only if \(U=V\) (\(U=1-V\)) almost surely, see, e.g., \cite[Examples 1.3.3 and 1.3.5]{Durante-2016}.
Given a bivariate copula $C$, its \emph{survival function} \(\overline{C}\colon [0,1]^2\to [0,1]\) is defined by
\begin{align}\label{eqsurvfun}
    \overline{C}(u,v):= 1 -u-v+C(u,v)\,,
\end{align}
see, e.g., \cite{Durante-2016}.
Furthermore, the upper orthant order on \(\cC_2\) is defined by the pointwise comparison of survival functions of bivariate copulas, i.e.,
\begin{align*}
    D \leq_{uo} E \quad \colon \Longleftrightarrow \quad \overline{D}(u,v) \leq \overline{E}(u,v) \quad \text{for all } (u,v)\in [0,1]^2\,.
\end{align*}
As an immediate consequence of \eqref{eqsurvfun}, for bivariate copulas the lower and upper orthant order are equivalent.
Hence, the in the literature frequently considered \emph{concordance order}, which is defined through the lower and upper orthant ordering of copulas, coincides for bivariate copulas with the lower orthant order. Note that in the three- and higher-dimensional setting, the lower and upper orthant order diverge, see \cite{Mueller-Stoyan-2002}.

\subsubsection{Orderings of predictability}

Due to axiom (A\ref{ax2}), an ordering of predictability should be invariant with respect to bijective transformations of the input variable \(X\,.\)
To this end, define for integrable functions \(f,\,g\colon (0,1)\to \R\) the \emph{Schur order} \(f\prec_S g\) by
\begin{align}\label{defschurfun}
\int_0^x f^*(t)\de t &\leq \int_0^x g^*(t)\de t ~~~\text{for all } x\in (0,1) \text{ and} ~~~
\int_0^1 f^*(t)\de t = \int_0^1 g^*(t)\de t\,,
\end{align}
where \(h^*\) denotes the decreasing rearrangement\footnote{Roughly speaking, the decreasing rearrangement of a (piecewise constant) function can be obtained by sorting the graph in descending order, compare Figure \ref{fig:schur_order}.} of an integrable function \(h\colon (0,1)\to \R\,,\) i.e., the essentially uniquely determined decreasing function \(h^*\) such that \(\lambda(h^*\geq w)=\lambda(h\geq w)\) for all \(w\in \R\,,\) where \(\lambda\) denotes the Lebesgue measure on \((0,1)\,,\) see, e.g., \cite{Ru-2013};
for an overview of rearrangements, see \cite{Chong-1974, Chong-1971, Day-1972, Hardy-1929, Luxemburg-1967, ruschendorf1981ordering}.
Minimal elements in the Schur order are constant functions while maximal elements do generally not exist. \\
Denote by \(q_X\) the quantile function of a real-valued random variable \(X\,,\) i.e., \(q_X(t):=\inf\{x\in \R\mid F_X(x)\geq t\}\,,\) \(t\in (0,1)\,.\)
Due to the following definition, conditional distribution functions are compared with respect to the conditioning variable in the Schur order, see \cite[Section 3.1]{Ansari-Fuchs-2022}.

\begin{definition}[Schur order for conditional distributions]~\\
    Let \((X_1,X_2)\) and \((Y_1,Y_2)\) be a bivariate random vector with \(X_2\eqd Y_2\,.\)
    Then \(X_2\) given \(X_1\) is said to be smaller than \(Y_2\) given \(Y_1\) in the \emph{Schur order for conditional distributions}, written \((X_2|X_1)\leq_S (Y_2|Y_1)\), if
    \begin{align}\label{defschurconddist}
        \E[\1_{\{X_2\leq y\}}\mid X_1 = q_{X_1}(\cdot)] \prec_S \E[\1_{\{Y_2\leq y\}}\mid Y_1 = q_{Y_1}(\cdot)] \quad \text{for all }y\in \R\,.
    \end{align}
\end{definition} 
Due to \eqref{defschurconddist}, the Schur order in the above definition compares the variability of conditional distribution functions in the conditioning variable with respect to the Schur order for functions, see Figure \ref{fig:schur_order}.
Since minimal elements of the Schur order for functions in \eqref{defschurfun} are constant functions, it follows that minimal elements with respect to the Schur order for conditional distributions are independent random vectors.
Similarly, for bounded functions, maximal elements in the Schur order for functions attain essentially two values given by the bounds.
It follows that maximal elements with respect to the Schur order for conditional distributions are perfectly directed dependent random vectors, see \cite[Theorem 3.5]{Ansari-Fuchs-2022}.
As shown in \cite{Ansari-Fuchs-2023}, the Schur order for conditional distributions satisfies the axioms (A\ref{ax1})--(A\ref{ax3}) for a large class of functionals \(\kappa\,.\)
In particular, it is invariant under bijective transformations of the conditioning variable.
An important property is that Chatterjee's rank correlation \(\xi\) defined in \eqref{eqchatt} is consistent with the Schur order for conditional distributions. \\
In the case of continuous marginal distributions, the copula \(C\) of a bivariate random vector \((X,Y)\) is uniquely determined and the conditional distribution function of \(Y|X=x\) can be represented as
\begin{align}\label{eqsklarcondcdf}
    F_{Y|X=x}(y) = \partial_1 C(F_X(x),F_Y(y))
\end{align}
for all \(y\in \R\) and for all \(x\in \R\) outside an \(F_X\)-null set which may depend on \(y\,,\) see, e.g., \cite[Theorem 2.2]{Ansari-Rueschendorf-2021}, where \(\partial_i\) denotes the partial derivative of a function of several arguments with respect to the \(i\)th component.
Hence, for \(U_i,V_i\) uniformly distributed in \((0,1)\,,\) \(i\in \{1,2\}\,,\) the Schur order for conditional distributions \((U_2|U_1)\leq_S (V_2|V_1)\) is equivalent to
\begin{align*}
    \partial_1 C_{U_1,U_2}(\cdot,v) \prec_S \partial_1 C_{V_1,V_2}(\cdot,v) \quad \text{for all } v\in [0,1]\,.
\end{align*}
This motivates to define a version of the Schur order for conditional distributions considering derivatives of bivariate copulas as follows, see \cite{Ansari-2019,Ansari-Rueschendorf-2021,strothmann2022rearranged}.

\begin{definition}[Schur order for copula derivatives]
    \label{defschurcopder}~\\
    Let \(D\) and \(E\) be bivariate copulas.
    Then \(D\) is said to be smaller than \(E\) in the \emph{Schur order for bivariate copula derivatives} with respect to the first (resp. second) component, written  \(D\leq_{\partial_1 S} E\) (resp. \(D\leq_{\partial_2 S}E\)) if
    \begin{align*}
        \partial_1 D(\cdot,v)\prec_S \partial_1 E(\cdot,v) \quad \text{(resp. } \partial_2 D(v,\cdot) \prec_S \partial_2 E(v,\cdot)\text{)} \quad \text{for all } v\in (0,1) \,.
    \end{align*}
    We write \(D\leq_{\partial S} E\) if \(D\leq_{\partial_1 S} E\) and \(D\leq_{\partial_2 S} E\,.\)
    Further, for \(i\in \{1,2\}\,,\) we write \(D=_{\partial_i S} E\) if \(D\leq_{\partial_i S} E\) and \(D\geq_{\partial_i S} E\,.\)
\end{definition}

\begin{figure}[htb]
    \centering
    \includegraphics[width=0.45\textwidth]{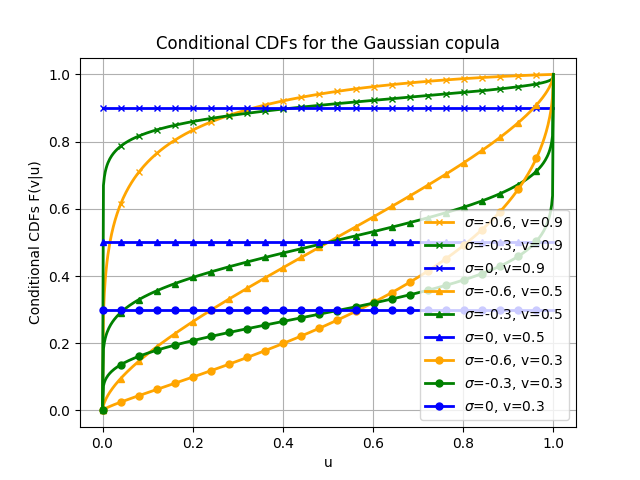}
    \includegraphics[width=0.45\textwidth]{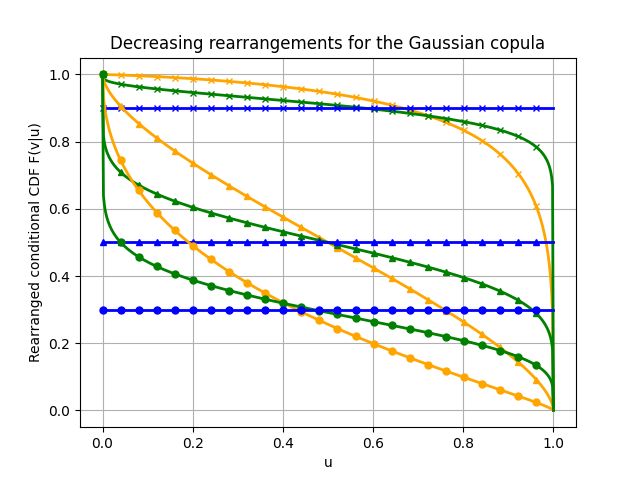} \\
    \includegraphics[width=0.45\textwidth]{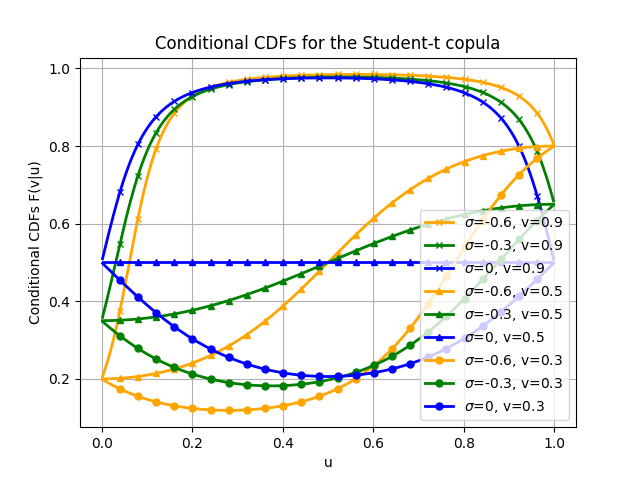}
    \includegraphics[width=0.45\textwidth]{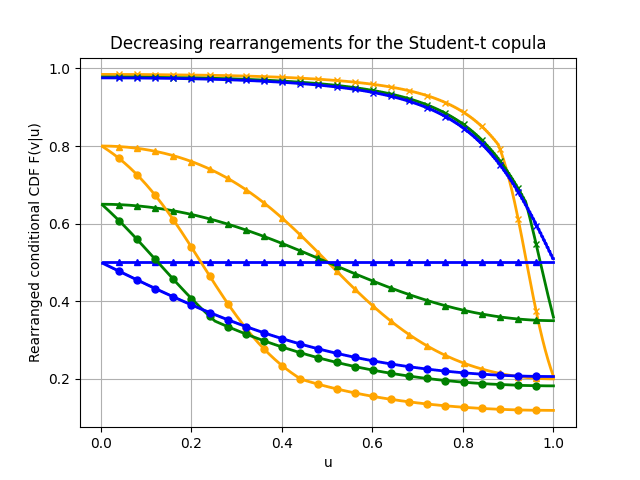}
    \caption{
        Variability of conditional distribution functions described by the decreasing rearrangements (right) of the copula derivatives \(\partial_1 C(\cdot,v)\) (left) in the case of the Gaussian copula (top) and the Student-t copula with one degree of freedom (bottom) for three choices of correlation parameters and for three choices of \(v\): \(\rho = -0.6\) (orange plots), \(\rho = -0.3\) (green plots) and \(\rho=0\) (blue plots), \(v=0.3\) (crossed plots), \(v= 0.6\) (triangulared plots), and $v=0.9$ (circled plots). 
        As the upper right plot indicates, the Gaussian copula family is increasing in the correlation parameter \(|\rho|\) with respect to the Schur order for conditional distributions. Further, it shows that the independence copula is the minimal element with respect to the Schur order for conditional distributions as a consequence of the definition of the Schur order for functions in \eqref{defschurfun}.
        The lower left plot also illustrates that Student-t copulas are not CI, see Definition \ref{defposdep} as well as Table \ref{tab:non_arch_results}. \\
    }
    \label{fig:schur_order}
\end{figure}

Since constant functions are minimal with respect to the Schur order for functions, the independence copula \((u,v)\mapsto \Pi(u,v):= uv\,,\) \((u,v)\in [0,1]^2\,,\) is the uniquely determined minimal element with respect to \(\leq_{\partial_1 S}\,,\) \(\leq_{\partial_2 S}\,,\) and \(\leq_{\partial_S}\) in the class \(\cC_2\) of bivariate copulas.
A visualization of the Schur order for copula derivatives is given in Figure \ref{fig:schur_order}. \\
As discussed above, the Schur order for conditional distributions and the Schur order for copula derivatives coincide in the following sense.

\begin{lemma}[Characterization of the Schur orderings]\label{lemcharschur}~\\
    Let \(D\) and \(E\) be bivariate copulas and let \((U_1,U_2)\) and \((V_1,V_2)\) be bivariate random vectors with distribution functions \(F_{U_1,U_2}=D\) and \(F_{V_1,V_2}=E\).
    For \(i\in \{1,2\}\) and \(j\in \{1,2\}\setminus \{i\}\,,\) the following statements are equivalent:
    \begin{enumerate}[(i)]
        \item \(D\leq_{\partial_i S} E\,.\)
        \item \((U_j|U_i) \leq_{S} (V_j|V_i)\,.\)
    \end{enumerate}
\end{lemma}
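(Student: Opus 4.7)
The plan is to unpack both sides of the claimed equivalence via the conditional version of Sklar's theorem stated in \eqref{eqsklarcondcdf} and to exploit the fact that the marginals of a copula are uniform. After this translation, the lemma reduces to a line-by-line matching of the definitions of the two Schur orderings involved, so essentially no work remains.

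I would first handle the case $i=1$, $j=2$, the other case being symmetric. Since $D$ and $E$ are copulas, the first marginals of $(U_1,U_2)\sim D$ and $(V_1,V_2)\sim E$ are uniform on $(0,1)$, so $q_{U_1}(t) = q_{V_1}(t) = t$ for $t \in (0,1)$. Applying \eqref{eqsklarcondcdf} to $(U_1,U_2)$ gives
\begin{align*}
    \E[\1_{\{U_2 \leq y\}} \mid U_1 = q_{U_1}(t)] = \partial_1 D(t, y)
\end{align*}
for every $y \in [0,1]$ and almost every $t \in (0,1)$ (with the exceptional null set possibly depending on $y$), and analogously with $D$, $U$ replaced by $E$, $V$. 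Hence for each fixed $y \in [0,1]$, the Schur comparison of functions $\prec_S$ between the two conditional expectations above is literally the comparison $\partial_1 D(\cdot,y) \prec_S \partial_1 E(\cdot,y)$. Thus the defining condition \eqref{defschurconddist} of $(U_2|U_1) \leq_S (V_2|V_1)$ restricted to $y \in [0,1]$ is exactly the defining condition of $D \leq_{\partial_1 S} E$ in Definition \ref{defschurcopder}. Values $y \notin [0,1]$ are harmless: for $y < 0$ both conditional distribution functions vanish almost surely and for $y \geq 1$ both equal $1$ almost surely, so the Schur comparison there is a trivial equality of constants.

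The remaining case $i=2$, $j=1$ is handled identically after swapping the roles of the two coordinates, using the symmetric analogue of \eqref{eqsklarcondcdf}, namely $F_{U_1 \mid U_2 = u_2}(u_1) = \partial_2 D(u_1,u_2)$ almost surely in $u_2$. I do not expect any genuine obstacle; the only point requiring care is bookkeeping of the $y$-dependent Lebesgue null set on which \eqref{eqsklarcondcdf} may fail. Since the Schur order in \eqref{defschurconddist} is required to hold for each $y$ separately and the Schur order for functions in \eqref{defschurfun} depends only on the almost-everywhere equivalence class of its arguments, this accounting causes no issue. Everything else is a direct translation of definitions.
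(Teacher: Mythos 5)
Your proposal is correct and follows essentially the same route as the paper, which justifies this lemma in the discussion preceding Definition \ref{defschurcopder}: since copula marginals are uniform, the quantile functions are the identity, the conditional Sklar representation \eqref{eqsklarcondcdf} turns the conditional distribution functions into the partial derivatives $\partial_i D(\cdot,v)$, and the two orderings then coincide by definition. Your additional care about the $y$-dependent null sets and the trivial cases $y\notin(0,1)$ is exactly the right bookkeeping and introduces no gap.
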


Under some positive dependence assumptions on the underlying distributions, the lower orthant order and the Schur order for conditional distributions are equivalent, as we discuss in the following subsection.


\subsection{Positive and negative dependence concepts}
\label{subsec:pos_and_neg_dep_concepts}

Many members of well-known bivariate copula families exhibit positive or negative dependencies.
We make use of the following positive dependence concepts.

\begin{definition}[Positive dependence concepts]\label{defposdep}~\\
    A bivariate random vector \((X_1,X_2)\) is said to be
    \begin{enumerate}[(i)]
        \item \emph{positive lower orthant dependent} (PLOD) if \(P(X_1\leq x, X_2\leq y) \geq P(X_1\leq x) P(X_2\leq y)\) for all \(x,y\in \R\,.\)
        \item \label{defposdep2} \emph{conditionally increasing in sequence} (CIS) if \(P(X_2\geq y\mid X_1 =x)\) is increasing in \(x\) outside a Lebesgue-null set for all \(y\in \R\,.\)
        \item \label{defposdep3} \emph{conditionally increasing} (CI) if \((X_1,X_2)\) and \((X_2,X_1)\) are CIS.
        \item \emph{totally positive of order \(2\)} (\(\TP\)) if it has a Lebesgue density \(f\) that is log-supermodular, i.e., \(\log f(x\vee y) + \log f(x\wedge y) \geq \log f(x)+ \log f(y)\) for all \(x,y\in \R^2\,,\) where \(x\wedge y\) and \(x\vee y\) denote the componentwise minimum and maximum, respectively.
    \end{enumerate}
\end{definition}

For continuous marginal distribution functions, the terms in the above definition depend only on the underlying copula, so we also refer the definition to copulas.
The concepts are related by
\begin{align}\label{implposdepcon}
	\TP ~~~\Longrightarrow ~~~ \text{CI} ~~~\Longrightarrow ~~~ \text{CIS} ~~~\Longrightarrow ~~~ \text{PLOD}\,,
\end{align}
where all implications are strict, see \cite[page 146]{Mueller-Stoyan-2002} for an overview of these concepts.
The following lemma relates the lower orthant order and the Schur order for copula derivatives under some positive dependence conditions, see \cite[Lemma 3.16]{Ansari-Rueschendorf-2021}.

\begin{lemma}\label{lem:3.16}
    Let $D$ and $E$ be bivariate copulas.
    Then, the following statements hold true: 
    \begin{enumerate}[(i)]
        \item If $E$ is CIS, then $D\leq_{\partial_1 S} E$ implies $D\leq_{lo} E$.
        \item If $D$ and $E$ are CIS, then $D\leq_{\partial_1 S} E$ and $D\leq_{lo}E$ are equivalent.
    \end{enumerate}
\end{lemma}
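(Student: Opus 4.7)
The plan is to exploit a simple but decisive observation: for a CIS copula $C$, the map $u\mapsto \partial_1 C(u,v)$ is decreasing for (almost) every fixed $v$, and hence coincides with its own decreasing rearrangement $(\partial_1 C)^*(\cdot, v)$. Combined with the fact that $\int_0^1 \partial_1 C(t,v)\,\d t = C(1,v)-C(0,v) = v$ for every copula, this lets me translate the Schur-order condition directly into a pointwise condition on $C$.

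First, I would record the equivalence: for a copula $C$ with associated $(U_1,U_2)\sim C$, one has $\mathbb{P}(U_2 \geq v \mid U_1 = u) = 1-\partial_1 C(u,v)$, so $C$ is CIS if and only if $\partial_1 C(\cdot,v)$ is decreasing on $(0,1)$ for every $v\in(0,1)$. Next I would invoke the Hardy--Littlewood inequality, which is the basic building block: for any integrable $h\colon(0,1)\to\mathbb{R}$,
\begin{align*}
    \int_0^x h(t)\,\d t \leq \int_0^x h^*(t)\,\d t \quad\text{for all } x\in(0,1),
\end{align*}
with equality for every $x$ whenever $h$ is itself decreasing.

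For part (i), suppose $E$ is CIS and $D\leq_{\partial_1 S} E$. Fix $v\in(0,1)$ and $x\in(0,1)$. Since $E$ is CIS, $(\partial_1 E)^*(\cdot,v) = \partial_1 E(\cdot,v)$, so
\begin{align*}
    D(x,v) = \int_0^x \partial_1 D(t,v)\,\d t \leq \int_0^x (\partial_1 D)^*(t,v)\,\d t \leq \int_0^x (\partial_1 E)^*(t,v)\,\d t = E(x,v),
\end{align*}
where the first inequality is Hardy--Littlewood and the second is the Schur-order hypothesis. This gives $D\leq_{lo} E$.

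For part (ii), the nontrivial direction is $D\leq_{lo} E \Rightarrow D\leq_{\partial_1 S} E$ (the converse being part (i)). If both $D$ and $E$ are CIS, then both $\partial_1 D(\cdot,v)$ and $\partial_1 E(\cdot,v)$ coincide with their decreasing rearrangements, so
\begin{align*}
    \int_0^x (\partial_1 D)^*(t,v)\,\d t = D(x,v) \leq E(x,v) = \int_0^x (\partial_1 E)^*(t,v)\,\d t
\end{align*}
for all $x\in(0,1)$, while the equality of full integrals over $(0,1)$ is automatic since both sides equal $v$. Thus $D\leq_{\partial_1 S} E$. I do not anticipate a serious obstacle here; the only subtlety is to state carefully that CIS is an \emph{a.e.} monotonicity statement and to justify using the Hardy--Littlewood inequality on $\partial_1 D(\cdot, v)$, which is a legitimate integrable function for almost every $v$ since the total integral over $(0,1)$ equals $v<\infty$.
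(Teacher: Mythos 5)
Your proof is correct: the key observation that a CIS copula satisfies $\partial_1 C(\cdot,v)=(\partial_1 C)^*(\cdot,v)$ a.e., combined with $C(x,v)=\int_0^x\partial_1 C(t,v)\,\mathrm{d}t$ (valid since $C(\cdot,v)$ is Lipschitz) and the Hardy--Littlewood bound $\int_0^x h\le\int_0^x h^*$, is exactly what is needed. The paper itself does not prove this lemma but cites it from Ansari and R\"uschendorf (2021), and your argument is essentially the standard one given there.
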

The Schur order for conditional distributions generates large subclasses of distributions for which the extremal elements with respect to the lower orthant order are CIS.
To be more precise, consider for a bivariate copula \(E\) the subclass \(\cC^E:=\{C\in \cC_2 \mid C\leq_{\partial_1 S} E\}\) of bivariate copulas that are smaller than \(E\) or equal to \(E\) in the Schur order for copula derivatives with respect to the first component.
The following result is due to \cite[Proposition 3.17]{Ansari-Rueschendorf-2021}.

\begin{lemma}[Extremal elements in \(\cC^E\)]\label{lem:extremal_elements}~\\
    For any bivariate copula \(E\,,\) the following statements hold true:
    \begin{enumerate}[(i)]
        \item There exists a uniquely determined minimal copula \(E_\downarrow\) in \(\cC^E\) and a uniquely determined maximal copula \(E_\uparrow\) in \(\cC^E\) such that \(E_\downarrow \leq_{lo} D \leq_{lo} E_\uparrow\) for all \(D\in \cC^E\,,\)
        \item \(E_\uparrow\) is CIS.
        \item \(E_\downarrow(u,v) = u - E_\uparrow(1-u,v)\) for all \((u,v)\in [0,1]^2\,.\)
        \item \(E_\uparrow=_{\partial_1 S} E =_{\partial_1 S} E_\downarrow\)
    \end{enumerate}
\end{lemma}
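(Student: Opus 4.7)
The plan is to construct $E_\uparrow$ explicitly by rearranging the first-variable derivative of $E$ pointwise in the second variable, verify it is a copula with the needed properties, and then obtain $E_\downarrow$ as a first-coordinate reflection of $E_\uparrow$.

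First, for each $v \in [0,1]$ set $f_v(u) := \partial_1 E(u,v)$, which is a $[0,1]$-valued conditional distribution function with $\int_0^1 f_v(t)\,dt=v$, and let $f_v^*$ denote its decreasing rearrangement. Define
\begin{align*}
    E_\uparrow(u,v) := \int_0^u f_v^*(t)\,dt.
\end{align*}
I would check $E_\uparrow \in \cC_2$: groundedness is immediate from $f_0^* \equiv 0$; the marginal $E_\uparrow(1,v)=v$ follows from equimeasurability of $f_v$ and $f_v^*$; the marginal $E_\uparrow(u,1)=u$ follows from $f_1 \equiv 1$; and $2$-increasingness reduces to showing $f_{v_1}^* \leq f_{v_2}^*$ pointwise whenever $v_1 \leq v_2$, which I would deduce from the pointwise inequality $f_{v_1} \leq f_{v_2}$ (a consequence of $E$ itself being $2$-increasing) together with the monotonicity of the decreasing rearrangement under pointwise order of the underlying functions.

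Second, since $\partial_1 E_\uparrow(\cdot,v) = f_v^*$ is nonincreasing, $E_\uparrow$ is CIS, giving (ii). Equimeasurability of $f_v$ and $f_v^*$ yields $E_\uparrow =_{\partial_1 S} E$, giving one half of (iv). For the upper bound in (i), any $D \in \cC^E$ satisfies $D \leq_{\partial_1 S} E =_{\partial_1 S} E_\uparrow$, and since $E_\uparrow$ is CIS, Lemma \ref{lem:3.16}(i) yields $D \leq_{lo} E_\uparrow$; uniqueness of the maximum is then automatic from the antisymmetry of $\leq_{lo}$.

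For the lower element, I would define $E_\downarrow$ as the first-coordinate reflection of $E_\uparrow$, i.e.\@ the copula of $(1-U_1,U_2)$ with $(U_1,U_2)\sim E_\uparrow$, which supplies the closed form in (iii). Its derivative $\partial_1 E_\downarrow(\cdot,v)$ is then the increasing rearrangement of $f_v$, still equimeasurable with $f_v$, so $E_\downarrow =_{\partial_1 S} E$, finishing (iv). For the lower bound in (i), given $D \in \cC^E$ I would introduce the analogous reflection $\widetilde D$ of $D$ and observe $\widetilde D =_{\partial_1 S} D \leq_{\partial_1 S} E_\uparrow$; applying Lemma \ref{lem:3.16}(i) to the CIS copula $E_\uparrow$ yields $\widetilde D \leq_{lo} E_\uparrow$, which unfolds under the reflection to $E_\downarrow \leq_{lo} D$, with uniqueness again automatic. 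The main obstacle is the verification of $2$-increasingness of $E_\uparrow$: rearranging $f_v$ independently for each $v$ could in principle destroy joint monotonicity in $(u,v)$, and the argument hinges on the fact that the decreasing rearrangement respects the pointwise order, so that the monotonicity of $f_v$ in $v$ (inherited from the $2$-increasingness of $E$) passes through to the pointwise ordering of $f_v^*$.
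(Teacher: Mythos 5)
The paper does not prove this lemma itself---it cites \cite[Proposition 3.17]{Ansari-Rueschendorf-2021}---and your construction of \(E_\uparrow\) as \(\int_0^u (\partial_1 E(\cdot,v))^*(t)\de t\) is exactly the ``increasing rearranged copula'' construction used in that reference and in \cite{strothmann2022rearranged}, with all the key verifications (monotonicity of the decreasing rearrangement under a.e.\@ pointwise order for the \(2\)-increasingness, equimeasurability for (iv), and Lemma \ref{lem:3.16}(i) applied to \(E_\uparrow\) and to the reflected copulas for the extremality in (i)) carried out correctly. One point to note: the first-coordinate reflection of \(E_\uparrow\) has distribution function \(v - E_\uparrow(1-u,v)\), not \(u - E_\uparrow(1-u,v)\) as printed in statement (iii); the printed formula violates the marginal conditions (e.g.\@ it gives the value \(1\) at \(u=1\)) and is evidently a typo, so your construction yields the correct identity, but you should not assert that it ``supplies the closed form in (iii)'' without flagging that discrepancy.
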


The copula \(E_\uparrow\) in the above lemma is denoted as \emph{increasing rearranged copula}, see \cite[Proof of Proposition 3.17]{Ansari-Rueschendorf-2021} and \cite{strothmann2022rearranged} for a construction of \(E_\uparrow\,.\)
In analogy to the CIS property, we say that a random vector $(X_1, X_2)$ (or its distribution function) is \emph{conditionally decreasing in sequence} (CDS) if \(P(X_2\geq y\mid X_1 =x)\) is decreasing in \(x\) outside a Lebesgue-null set for all \(y\in \R\), and \emph{conditionally decreasing} (CD) if also $(X_2, X_1)$ is conditionally decreasing in sequence.
Similar to Lemma \ref{lem:3.16} we obtain the following result.
\begin{lemma}\label{lem:3.16_cds}
    Let $D$ and $E$ be bivariate copulas.
    Then, the following statements hold true: 
    \begin{enumerate}[(i)]
        \item If $E$ is CDS, then $D\leq_{\partial_1 S} E$ implies $E\leq_{lo} D$.
        \item If $D$ and $E$ are CDS, then $D\leq_{\partial_1 S} E$ and $E\leq_{lo}D$ are equivalent.
    \end{enumerate}
\end{lemma}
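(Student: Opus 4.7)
The plan is to reduce Lemma \ref{lem:3.16_cds} to Lemma \ref{lem:3.16} via a reflection of the first coordinate, which exchanges CDS with CIS and reverses the lower orthant order. For any bivariate copula $C$, introduce $C^\sigma(u,v) := v - C(1-u,v)$; this is a bivariate copula, namely the distribution function of $(1-U_1,U_2)$ when $(U_1,U_2)\sim C$.

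Two short observations do the real work. First, differentiating gives $\partial_1 C^\sigma(u,v) = \partial_1 C(1-u,v)$, so the map $u\mapsto \partial_1 C^\sigma(u,v)$ is increasing (resp.\ decreasing) exactly when $u\mapsto \partial_1 C(u,v)$ is decreasing (resp.\ increasing); in particular $C^\sigma$ is CIS if and only if $C$ is CDS. Second, since $u\mapsto 1-u$ is a measure-preserving involution of $(0,1)$, the two functions $\partial_1 C(\cdot,v)$ and $\partial_1 C^\sigma(\cdot,v)$ share both their total integral (namely $v$) and their decreasing rearrangement, so $D\leq_{\partial_1 S} E$ if and only if $D^\sigma \leq_{\partial_1 S} E^\sigma$.

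For part (i), I would apply Lemma \ref{lem:3.16}(i) to the pair $(D^\sigma, E^\sigma)$: since $E$ is CDS, $E^\sigma$ is CIS, and $D^\sigma \leq_{\partial_1 S} E^\sigma$ then yields $D^\sigma \leq_{lo} E^\sigma$, i.e., $v - D(1-u,v) \leq v - E(1-u,v)$ for every $(u,v)\in[0,1]^2$. Rearranging and substituting $u\mapsto 1-u$ gives $E(u,v)\leq D(u,v)$, i.e., $E\leq_{lo} D$. Part (ii) follows analogously by applying Lemma \ref{lem:3.16}(ii) to $(D^\sigma, E^\sigma)$, both of which are now CIS.

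There is no conceptual obstacle here; the main care lies in tracking how the three orderings (CDS vs.\ CIS, $\leq_{\partial_1 S}$, and $\leq_{lo}$) each transform under the reflection $C\mapsto C^\sigma$. A fully self-contained alternative, avoiding the reduction, is to mirror the proof of Lemma \ref{lem:3.16} using the increasing rearrangement $h_*$ in place of the decreasing one $h^*$: since $E$ being CDS means $\partial_1 E(\cdot,v)$ is itself increasing, one has $\int_0^x (\partial_1 E(\cdot,v))_*(s)\,\mathrm{d}s = E(x,v)$, and combining the equivalent formulation $\int_0^x (\partial_1 D(\cdot,v))_*(s)\,\mathrm{d}s \geq \int_0^x (\partial_1 E(\cdot,v))_*(s)\,\mathrm{d}s$ of the Schur order with the Hardy--Littlewood bound $\int_0^x (\partial_1 D(\cdot,v))_*(s)\,\mathrm{d}s \leq D(x,v)$ produces $E(x,v)\leq D(x,v)$ directly.
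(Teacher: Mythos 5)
Your proof is correct, and it actually does more than the paper, which offers no argument for this lemma at all: it merely states that the result is obtained ``similar to Lemma \ref{lem:3.16}''. Your second, self-contained argument is essentially the mirrored proof the paper is alluding to --- replace the decreasing rearrangement by the increasing one, use that a CDS copula satisfies $\int_0^x (\partial_1 E(\cdot,v))_*(s)\de s = E(x,v)$, and invoke the Hardy--Littlewood-type bound $\int_0^x (\partial_1 D(\cdot,v))_*(s)\de s \leq D(x,v)$ --- so that part matches the intended route. Your primary argument, the reduction via the reflection $C^\sigma(u,v) = v - C(1-u,v)$, is a genuinely different and arguably cleaner device: all three ingredients (CDS $\leftrightarrow$ CIS, invariance of $\leq_{\partial_1 S}$ under the measure-preserving involution $u\mapsto 1-u$, and the order reversal of $\leq_{lo}$) transform correctly, and both parts of the lemma then fall out of Lemma \ref{lem:3.16} with no new analysis. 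What the reduction buys is that it packages the ``similarity'' into a single structural statement that could be reused elsewhere (e.g.\ it makes transparent why every CD entry in Tables \ref{tab:arch_results} and \ref{tab:non_arch_results} pairs a $\leq_{lo}$-monotonicity with the opposite $\leq_{\partial S}$-monotonicity); what the direct mirroring buys is independence from Lemma \ref{lem:3.16} as a black box. Both are valid; no gaps.
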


As we list in Tables \ref{tab:arch_results} and \ref{tab:non_arch_results}, many well-known copulas are CI or CD and thus coincide with their increasing rearranged copula \(C_\uparrow\) or their decreasing rearranged copula \(C_{\downarrow}\,.\)

\subsection{Measures of association}\label{secmoca}

In this section, we consider some well-known measures of association.
While Spearman's rho, Kendall's tau and the tail-dependence coefficients are consistent with the lower orthant order, Chatterjee's rank correlation is consistent with the Schur order for conditional distributions.

\subsubsection{Measures of concordance}

Let \((X,Y)\) be a random vector with continuous marginal distribution functions and let \(C\) be its uniquely determined copula.
Then \emph{Spearman's rho}, denoted by \(\rho_S(X,Y)\) or \(\rho_S(C)\,,\) is defined by
\begin{align}\label{frm_rho_integral}
    \rho_S(X,Y)=\rho_S(C) = 12 \int_{[0,1]^2} C(u,v) \de \lambda^2(u,v) - 3\,,
\end{align}
where \(\lambda^2\) denotes the Lebesgue measure on \([0,1]^2\,.\)
Further, \emph{Kendall's tau}, denoted by \(\tau(X,Y)\) or \(\tau(C)\,,\) is defined by
\begin{align}\label{frm_tau_integral}
    \tau(X,Y) = \tau(C) = 4 \int_{[0,1]^2} C(u,v) \de C(u,v) - 1\,.
\end{align}

Both measures fulfil the axioms of a measure of concordance and are, in particular, consistent with the pointwise ordering of copulas as follows, see, e.g., \cite[Theorem 2.4.9]{Durante-2016}.

\begin{lemma}[Consistency with \(\leq_{lo}\)]\label{lemmoc}~\\
    Let \((X,Y)\) and \((X',Y')\) be bivariate random vectors with continuous distribution functions.
    Then \((X,Y)\leq_{lo} (X',Y')\) implies \(\rho(X,Y)\leq \rho (X',Y')\) and \(\tau(X,Y)\leq \tau(X',Y')\,.\)
\end{lemma}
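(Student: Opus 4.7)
The plan is to handle Spearman's rho and Kendall's tau separately. For Spearman's rho, the conclusion is immediate from \eqref{frm_rho_integral}: the map $C \mapsto 12 \int_{[0,1]^2} C(u,v) \de \lambda^2(u,v) - 3$ is affine and pointwise-monotone in $C$ with respect to the positive reference measure $\lambda^2$, so $C \leq_{lo} C'$ yields $\rho_S(C) \leq \rho_S(C')$ directly.

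The harder case is Kendall's tau, because in \eqref{frm_tau_integral} the copula $C$ appears both as integrand and as integrating measure. My plan is to introduce the auxiliary bilinear concordance functional
\[
Q(C_1, C_2) := 4 \int_{[0,1]^2} C_1(u,v) \de C_2(u,v) - 1,
\]
so that $\tau(C) = Q(C, C)$, and then to establish two properties: (a) $Q$ is $\leq_{lo}$-monotone in its first argument, and (b) $Q$ is symmetric in its two arguments. Property (a) is immediate: if $C_1 \leq_{lo} C_1'$, then $C_1' - C_1 \geq 0$ pointwise, and integration against the positive measure $\de C_2$ preserves the inequality, giving $Q(C_1, C_2) \leq Q(C_1', C_2)$ for every copula $C_2$.

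For the symmetry (b), the cleanest route is a probabilistic interpretation. Let $(U_1, V_1) \sim C_1$ and $(U_2, V_2) \sim C_2$ be independent; conditioning on $(U_2, V_2)$ and using continuity of the marginals yields $P(U_1 < U_2, V_1 < V_2) = \int C_1 \de C_2$ and $P(U_1 > U_2, V_1 > V_2) = \int \overline{C}_1 \de C_2$, where $\overline{C}_1$ denotes the survival function associated with $C_1$ via \eqref{eqsurvfun}. Adding these and using $\E[U_2] = \E[V_2] = 1/2$ to absorb the linear terms, the concordance probability of the two independent pairs equals $2 \int C_1 \de C_2$, so $Q(C_1, C_2)$ equals the difference between concordance and discordance probabilities of pairs sampled from $C_1$ and $C_2$, an expression manifestly symmetric in the two arguments.

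Combining (a) and (b), if $C \leq_{lo} C'$ then
\[
\tau(C) = Q(C, C) \leq Q(C', C) = Q(C, C') \leq Q(C', C') = \tau(C'),
\]
completing the proof. The only step requiring any real work is the symmetry identity; the probabilistic derivation above makes it short, though a purely analytic proof via Fubini and a boundary integration-by-parts on $[0,1]^2$ (exploiting the uniform-margin property) works as well.
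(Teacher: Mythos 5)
Your proof is correct. Note that the paper itself gives no proof of this lemma, citing instead \cite[Theorem 2.4.9]{Durante-2016}; your argument is essentially the classical one underlying that citation (cf.\ also \cite[Theorem 5.1.1 and Corollary 5.1.2]{Nelsen-2006}): Spearman's rho is handled by pointwise monotonicity of the integral in \eqref{frm_rho_integral}, and Kendall's tau by the symmetry of the concordance functional \(Q(C_1,C_2)=4\int C_1\de C_2-1\,,\) which you correctly derive from the probabilistic identity \(\int \overline{C}_1\de C_2=\int C_1\de C_2\) together with the manifest symmetry of the concordance probability of two independent pairs. The chain \(Q(C,C)\leq Q(C',C)=Q(C,C')\leq Q(C',C')\) then closes the argument exactly as in the standard treatment, so there is nothing to object to.
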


\subsubsection{Measures of predictability}

For a bivariate random vector \((X,Y)\,,\) a measure of predictability \(\kappa(Y|X)\) (also known as dependence measure) takes values in the interval \([0,\,1]\) where \(0\) is attained if and only if \(X\) and \(Y\) are independent and where \(1\) is attained if and only if \(Y\) is perfectly dependent on $X$. \\
A recently studied measure of predictability that has attracted a lot of attention is \emph{Chatterjee's rank correlation} \(\xi(Y|X)\), also known as \emph{Dette-Siburg-Stoimenov's} dependence measure, which is defined for a bivariate random vector \((X,Y)\) by \eqref{eqchatt},
see \cite{chatterjee2020,dette2013copula}.
Chatterjee's rank correlation is consistent with respect to the Schur order for conditional distributions as follows, see \cite[Theorem 3.5]{Ansari-Fuchs-2022}.

\begin{lemma}[Consistency with \(\leq_S\)]\label{lemconS}~\\
Let \((X,Y)\) and \((X',Y')\) be bivariate random vectors.
Then \((Y|X)\leq_S (Y'|X')\) implies \(\xi(Y|X)\leq \xi(Y'|X')\,.\)
\end{lemma}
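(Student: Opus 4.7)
The plan is to split the inequality \(\xi(Y|X)\leq\xi(Y'|X')\) into an easy denominator equality and a non-trivial numerator inequality, and to obtain the latter by combining the defining property of the Schur order with the classical Hardy-Littlewood-Polya majorization inequality. Since \(Y\eqd Y'\), the denominator
\[
\int_\R \Var(\1_{\{Y\geq y\}})\de P^Y(y) = \int_\R P(Y\geq y)\bigl(1-P(Y\geq y)\bigr)\de P^Y(y)
\]
in \eqref{eqchatt} depends only on the marginal law of \(Y\) and hence coincides for \(\xi(Y|X)\) and \(\xi(Y'|X')\). Thus it suffices to compare the corresponding numerators
\[
\int_\R \Var\bigl(P(Y\geq y\mid X)\bigr)\de P^Y(y) \quad \text{and}\quad \int_\R \Var\bigl(P(Y'\geq y\mid X')\bigr)\de P^{Y'}(y).
\]

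Next I would pass to the quantile transform to pull everything back to Lebesgue measure on \((0,1)\). For \(U\) uniform on \((0,1)\) one has \(q_X(U)\eqd X\), so that with
\[
h_y(u) := P(Y\leq y\mid X=q_X(u)), \qquad h'_y(u) := P(Y'\leq y\mid X'=q_{X'}(u)),
\]
the identity \(\Var\bigl(P(Y\leq y\mid X)\bigr) = \Var_\lambda(h_y)\) holds (and similarly for \(h'_y\)), where \(\Var_\lambda\) denotes the variance with respect to Lebesgue measure on \((0,1)\). Since variance is invariant under \(f\mapsto 1-f\) and \(P(Y\geq y\mid X)=1-P(Y\leq y\mid X)\) almost surely, the problem reduces to proving the pointwise-in-\(y\) bound \(\Var_\lambda(h_y)\leq \Var_\lambda(h'_y)\) and then integrating against \(\de P^Y(y)=\de P^{Y'}(y)\).

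The hypothesis \((Y|X)\leq_S(Y'|X')\) from \eqref{defschurconddist} is, by definition, exactly \(h_y \prec_S h'_y\) in the sense of \eqref{defschurfun} for every \(y\in\R\). By the classical Hardy-Littlewood-Polya majorization theorem, \(h_y\prec_S h'_y\) implies \(\int_0^1 \phi\circ h_y\,\de\lambda \leq \int_0^1 \phi\circ h'_y\,\de\lambda\) for every convex \(\phi\colon\R\to\R\). Applying this with \(\phi(t)=t^2\), and observing that the equality condition \(\int_0^1 h_y^*\,\de\lambda = \int_0^1 (h'_y)^*\,\de\lambda\) in \eqref{defschurfun}, combined with equimeasurability of a function and its decreasing rearrangement, forces \(\int_0^1 h_y\,\de\lambda = \int_0^1 h'_y\,\de\lambda\), one deduces \(\Var_\lambda(h_y)\leq \Var_\lambda(h'_y)\), which is exactly what was needed.

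The argument has no real obstacle; the mathematical content is concentrated in the appeal to Hardy-Littlewood-Polya, i.e.\ the equivalence of majorization with the ordering induced by convex test functions, which is what converts the integral-of-rearrangement hypothesis \eqref{defschurfun} into the \(L^2\) bound that gives the variance comparison. The only mildly delicate point to record is the harmless swap between the indicators \(\1_{\{Y\leq y\}}\) (used to define \(\leq_S\)) and \(\1_{\{Y\geq y\}}\) (used in \eqref{eqchatt}), which is justified by the invariance of variance under complementation.
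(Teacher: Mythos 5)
The paper does not actually prove Lemma \ref{lemconS}; it is quoted from \cite[Theorem 3.5]{Ansari-Fuchs-2022}, so there is no internal proof to compare against. Your argument is the natural self-contained one, and its core is sound: the denominator of \eqref{eqchatt} depends only on the law of \(Y\), which is fixed by the marginal condition \(Y\eqd Y'\) built into the Schur order for conditional distributions; the quantile transform correctly identifies \(\Var(P(Y\leq y\mid X))\) with the Lebesgue variance of \(h_y\); and the Hardy--Littlewood--P\'olya characterization of majorization by convex test functions (see also \cite{Chong-1974}), applied to \(\phi(t)=t^2\) together with the equality of means forced by \eqref{defschurfun} and equimeasurability, yields \(\Var_\lambda(h_y)\leq\Var_\lambda(h'_y)\). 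That is exactly the content one needs.

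The one step that is literally false as written is the claim that \(P(Y\geq y\mid X)=1-P(Y\leq y\mid X)\) almost surely: the complement of \(\{Y\geq y\}\) is \(\{Y<y\}\), not \(\{Y\leq y\}\), and these differ with positive probability whenever \(y\) is an atom of \(Y\). Since the lemma is stated for arbitrary bivariate vectors and atoms carry positive \(P^Y\)-mass, this cannot be dismissed. It is, however, easily repaired: for fixed \(y\) one has \(P(Y\leq y'\mid X)\to P(Y<y\mid X)\) almost surely as \(y'\uparrow y\), so by bounded convergence
\[
\Var\bigl(P(Y<y\mid X)\bigr)=\lim_{y'\uparrow y}\Var\bigl(P(Y\leq y'\mid X)\bigr)\leq\lim_{y'\uparrow y}\Var\bigl(P(Y'\leq y'\mid X')\bigr)=\Var\bigl(P(Y'<y\mid X')\bigr)\,,
\]
and \(\Var(P(Y\geq y\mid X))=\Var(P(Y<y\mid X))\) by invariance of the variance under \(f\mapsto 1-f\). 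With this left-limit step inserted, your proof is complete.
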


If \((X,Y)\) has continuous marginal distribution functions, then \(\xi(Y|X)\) depends only on the copula \(C\) of \((X,Y)\) and
\begin{align}\label{frm_xi_integral}
    \xi(C):=\xi(Y|X) = 6 \int_0^1\int_0^1 (\partial_1 C(u,v))^2 \de u \de v - 2\,,
\end{align}
see, e.g., \cite{fuchs2021quantifying}.
Due to Lemmas \ref{lemcharschur} and \ref{lemconS}, for bivariate copulas \(D\) and \(E\,,\) \(D\leq_{\partial_1 S} E\) implies \(\xi(D)\leq \xi(E)\,;\) we refer to \cite{Ansari-Fuchs-2023} for a class of measures of predictability that are consistent with \(\leq_S\,.\)

\subsubsection{Tail dependence}

Further classical measures of association for bivariate copulas are the tail dependence coefficients, which are defined as follows, see, e.g., \cite{Joe-1997,Nelsen-2006}.

\begin{definition}[Tail dependence coefficients]~\\
    Let $C$ be a bivariate copula.
    Then, the \emph{lower tail dependence coefficient} of $C$ is defined by
    \[
        \lambda_L = \lambda_L^C=\lim _{t \rightarrow 0^{+}} \frac{C(t, t)}{t}
    ,\]
    and the \emph{upper tail dependence coefficient} of $C$ by
    \[
        \lambda_U = \lambda_U^C =2-\lim _{t \rightarrow 1^{-}} \frac{1-C(t, t)}{1-t}
    .\]
\end{definition}
The following lemma is immediate from the definition of the tail dependence coefficient.

\begin{lemma}[Consistency with \(\leq_{lo}\)]\label{lem:tail_consistent_with_lo}~\\
    Let $C$ and $D$ be bivariate copulas with $C\leq_{lo}D$.
    Then $\lambda_L^C\leq \lambda_L^D$ and $\lambda_U^C\leq \lambda_U^D$.
\end{lemma}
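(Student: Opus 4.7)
The plan is to unfold both tail coefficient definitions and exploit the single pointwise inequality that $C \leq_{lo} D$ forces on the diagonal, namely $C(t,t) \le D(t,t)$ for all $t \in [0,1]$. Each tail dependence coefficient is a monotone functional of this one-parameter diagonal quantity, so the lemma reduces to elementary monotonicity of limits (the limits themselves exist by the definition of $\lambda_L$ and $\lambda_U$, so no $\limsup/\liminf$ subtleties arise).

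For the lower coefficient, I would divide $C(t,t) \le D(t,t)$ by $t > 0$ to obtain $C(t,t)/t \le D(t,t)/t$ for $t \in (0,1]$ and pass to $t \to 0^+$, giving $\lambda_L^C \le \lambda_L^D$ immediately. For the upper coefficient, I would subtract both sides of $C(t,t) \le D(t,t)$ from $1$, which reverses the inequality to $1 - D(t,t) \le 1 - C(t,t)$, divide by $1-t > 0$ for $t \in [0,1)$, and pass to $t \to 1^-$ to obtain
\[
    \lim_{t \to 1^-} \frac{1-D(t,t)}{1-t} \;\le\; \lim_{t \to 1^-} \frac{1-C(t,t)}{1-t}.
\]
Subtracting each side from $2$ reverses the inequality once more, yielding $\lambda_U^C \le \lambda_U^D$. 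Alternatively, one may invoke the equivalence of $\leq_{lo}$ and $\leq_{uo}$ for bivariate copulas noted earlier in the paper via \eqref{eqsurvfun}, which gives $\overline{C}(t,t) \le \overline{D}(t,t)$ and makes the upper-tail monotonicity structurally identical to the lower-tail case.

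There is no genuine obstacle here: the statement is almost tautological once the definitions are opened up. The only bookkeeping step worth flagging is the double sign reversal in the upper-tail argument (subtract from $1$, then from $2$), which is purely algebraic and requires no additional hypotheses on $C$ or $D$ beyond $\leq_{lo}$.
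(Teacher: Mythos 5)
Your proposal is correct and matches the paper's intent exactly: the paper gives no written proof, stating only that the lemma ``is immediate from the definition of the tail dependence coefficient,'' and your argument (divide the diagonal inequality by $t$ for the lower tail; subtract from $1$, divide by $1-t$, and subtract from $2$ for the upper tail) is precisely that immediate argument spelled out. No gaps.
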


\section{Dependence properties of bivariate copula families}\label{secdeppropbivcopfam}
\begin{table}[tbp]
    \begin{center}
        \scalebox{0.8}{
        \begin{tabular}{llll}
            \toprule 
            & Family & Notation & Cumulative Distribution Function $C(u, v)$ \\
            \midrule
            Arch. & Clayton & $C^{\text{Cl}}_{\theta}$ & $\inbrackets{\left(u^{-\theta}+v^{-\theta}-1\right)^{-1 / \theta}}_+$ \\
            & Nelsen2 & $C^{\text{N2}}_{\theta}$ & $\left(1-[(1-u)^\theta+(1-v)^\theta]^{1 / \theta}\right)_+$ \\
            & Ali-Mikh.-Haq & $C^{\text{AMH}}_{\theta}$ & $\frac{u v}{1-\theta(1-u)(1-v)}$ \\
            & Gumbel-Hougaard & $C^{\text{GH}}_{\theta}$ & $\exp (-[(-\ln u)^\theta+(-\ln v)^\theta]^{1 / \theta})$ \\
            & Frank & $C^{\text{Fra}}_{\theta}$ & $-\frac{1}{\theta} \ln \left(1+\frac{\left(e^{-\theta u}-1\right)\left(e^{-\theta v}-1\right)}{e^{-\theta}-1}\right)$\\
            & Joe & $C^{\text{Joe}}_{\theta}$ & $1-\left[(1-u)^\theta+(1-v)^\theta-(1-u)^\theta(1-v)^\theta\right]^{1 / \theta}$\\
            & Nelsen7 & $C^{\text{N7}}_{\theta}$ & $(\theta u v+(1-\theta)(u+v-1))_+$ \\
            & Nelsen8 & $C^{\text{N8}}_{\theta}$ & $\left(\frac{\theta^2 u v-(1-u)(1-v)}{\theta^2-(\theta-1)^2(1-u)(1-v)}\right)_+$\\
            & Gumb.-Barn. & $C^{\text{GB}}_{\theta}$ & $u v \exp (-\theta \ln u \ln v)$ \\
            & Nelsen10 & $C^{\text{N10}}_{\theta}$ & $u v /\left[1+\left(1-u^\theta\right)\left(1-v^\theta\right)\right]^{1 / \theta}$ \\
            & Nelsen11 & $C^{\text{N11}}_{\theta}$ & $\left(u^\theta v^\theta-2\left(1-u^\theta\right)\left(1-v^\theta\right)\right)_+^{1 / \theta}$ \\
            & Nelsen12 & $C^{\text{N12}}_{\theta}$ & $\left(1+\left[(u^{-1}-1)^\theta+(v^{-1}-1)^\theta\right]^{1 / \theta}\right)^{-1}$\\
            & Nelsen13 & $C^{\text{N13}}_{\theta}$ & $\exp \left(1-\left[(1-\ln u)^\theta+(1-\ln v)^\theta-1\right]^{1 / \theta}\right)$\\
            & Nelsen14 & $C^{\text{N14}}_{\theta}$ & $\left(1+\left[(u^{-1 / \theta}-1)^\theta+(v^{-1 / \theta}-1)^\theta\right]^{1 / \theta}\right)^{-\theta}$\\
            & Genest-Ghoudi & $C^{\text{GG}}_{\theta}$ & $\left(1-\left[(1-u^{1 / \theta})^\theta+(1-v^{1 / \theta})^\theta\right]^{1 / \theta}\right)_+^\theta$\\
            & Nelsen16 & $C^{\text{N16}}_{\theta}$ & $\frac{1}{2}\left(S+\sqrt{S^2+4 \theta}\right), \quad S=u+v-1-\theta\left(\frac{1}{u}+\frac{1}{v}-1\right)$\\
            & Nelsen17 & $C^{\text{N17}}_{\theta}$ & $\left(1+\frac{1}{2^{-\theta}-1}\left[(1+u)^{-\theta}-1\right]\left[(1+v)^{-\theta}-1\right]\right)^{-1 / \theta}-1$\\
            & Nelsen18 & $C^{\text{N18}}_{\theta}$ & $\left(1+\theta / \ln \left[e^{\theta /(u-1)}+e^{\theta /(v-1)}\right]\right)_+$\\
            & Nelsen19 & $C^{\text{N19}}_{\theta}$ & $\theta / \ln (e^{\theta / u}+e^{\theta / v}-e^\theta)$\\
            & Nelsen20 & $C^{\text{N20}}_{\theta}$ & $\left[\ln \left(\exp \left(u^{-\theta}\right)+\exp \left(v^{-\theta}\right)-e\right)\right]^{-1 / \theta}$\\
            & Nelsen21 & $C^{\text{N21}}_{\theta}$ & $1-\left(1-\left(\left[1-(1-u)^\theta\right]^{1 / \theta}+\left[1-(1-v)^\theta\right]^{1 / \theta}-1\right)_+^\theta\right)^{1 / \theta}$\\
            & Nelsen22 & $C^{\text{N22}}_{\theta}$ & $\left(\sin\left(\asin(u^{\theta} - 1 ) + \asin(v^{\theta} - 1 ) \right) + 1\right)^{\frac{1}{\theta}}\1_{\setof{\asin(u^{\theta} - 1 ) + \asin(v^{\theta} - 1 ) \geq - \frac{\pi}{2}}}$\\
            \midrule
            EV & BB5 & $C^{\text{BB5}}_{\theta,\delta}$ & $\exp\of{-\left((-\log v)^{\theta} + (-\log u)^{\theta} - \left((-\log u)^{- \delta \theta} + (-\log v)^{- \delta \theta}\right)^{- \frac{1}{\delta}}\right)^{\frac{1}{\theta}}}$ \\
            & Cuadras-Augé & $C^{\text{CA}}_{\delta}$ & $(u\wedge v)^{\delta}(uv)^{1-\delta}$ \\
            & Galambos & $C^{\text{Gal}}_{\delta}$ & $uv\exp\of{\left(\log\of{\frac1u}^{- \delta} + \log\of{\frac1v}^{- \delta}\right)^{- \frac{1}{\delta}}} $ \\
            & Gumbel-Hougaard & $C^{\text{GH}}_{\theta}$ & $\exp (-[(-\ln u)^\theta+(-\ln v)^\theta]^{1 / \theta})$ \\
            & Hüsler-Reiss & $C^{\text{HR}}_{\delta}$ & $\exp\left( \ln(uv) A\Big(\frac{\ln v}{\ln u + \ln v}\Big) \right) $ with $A(t):= (1-t)\Phi(z_{1-t}) + t\Phi(z_t)$ \\
            & & & where $z_t := \frac1\delta + \frac{\delta}2\log\of{\frac{t}{1-t}}$ \\
            & Joe-EV & $C^{\text{JoeEV}}_{\alpha_1,\alpha_2,\delta}$ & $uv\exp\of{\left(\left(\alpha_{2} \log\of{\frac1v}\right)^{- \delta} + \left(\alpha_{1}\log\of{\frac1u}\right)^{- \delta}\right)^{- \frac{1}{\delta}}} $ \\
            & Marshall-Olkin & $C^{\text{MO}}_{\alpha_1, \alpha_2}$ & $\min \left(u^{1-\alpha_1} v, u v^{1-\alpha_2}\right)$\\
            & t-EV & $C^{\text{tEV}}_{\rho}$ & $\exp\left( \ln(uv) A\Big(\frac{\ln v}{\ln u + \ln v}\Big) \right) $ with $A(t) = (1-t)T_{\nu + 1}(z_{1-t}) + t T_{\nu + 1}(z_t)$ \\
            & & & where $z_t := \sqrt{\frac{1+\nu}{1-\rho^2}}\inbrackets{\of{\frac{t}{1-t}}^{1/\nu}-\rho}$ \\
            & Tawn & $C^{\text{Tawn}}_{\alpha_1,\alpha_2, \theta}$ & $u^{1-\alpha_1}v^{1-\alpha_2}e^{-\left(\left(\alpha_{2} \log\of{\frac1v}\right)^{\theta} + \left(\alpha_{1}\log\of{\frac1u}\right)^{\theta}\right)^{\frac{1}{\theta}}} $ \\
            \midrule
            Ell. & Gaussian & $C^{\text{Gauss}}_{\rho}$ & $\int_{-\infty}^{\Phi^{-1}\left(v\right)} \int_{-\infty}^{\Phi^{-1}\left(u\right)} \frac{1}{2 \pi\left(1-\rho^2\right)^{1 / 2}} \exp \left\{\frac{-\left(x^2-2 \rho x y+y^2\right)}{2\left(1-\rho^2\right)}\right\} \mathrm{d} x \mathrm{~d} y$  \\
            & Student-t & $C^{\text{t}}_{\rho, \nu}$ & $\int_{-\infty}^{T_\nu^{-1}\left(v\right)} \int_{-\infty}^{T_\nu^{-1}\left(u\right)}\frac{\Gamma[(\nu+\rho) / 2]}{\Gamma(\nu / 2) \nu^{\rho / 2} \pi^{\rho / 2}|\Sigma|^{1 / 2}}\left[1+\frac{1}{\nu}\bsx^{\mathrm{T}} \Sigma^{-1}\bsx\right]^{-(\nu+\rho) / 2}  \de \bsx$\\
            & Laplace & $C^{\text{Lap}}_{\rho}$ & $\int_{-\infty}^{F^{-1}\left(v\right)} \int_{-\infty}^{F^{-1}\left(u\right)} \frac{1}{\pi|\Sigma|^{1/2}}\left(\frac{\bsx^{\prime} \Sigma^{-1} \bsx}{2}\right)^{v / 2} K_v\left(\sqrt{2 \bsx^{\prime} \Sigma^{-1} \bsx}\right) \de \bsx$\\
            \midrule
            Uncl. & Fréchet & $C^{\text{Fré}}_{\alpha, \beta}$ & $\alpha M(u, v)+(1-\alpha-\beta) \Pi(u, v)+\beta W(u, v)$\\
            & Mardia & $C^{\text{Ma}}_{\theta}$ & $\frac{\theta^2(1+\theta)}{2}M(u, v)+(1-\theta^2) \Pi(u, v)+\frac{\theta^2(1-\theta)}{2} W(u, v)$ \\
            & Farl.-Gumb.-Morg. & $C^{\text{FGM}}_{\theta}$ & $uv + \theta u v (1-u)(1-v)$  \\
            & Plackett & $C^{\text{Pl}}_{\theta}$ & $\frac{1+(\theta-1)(u+v) - \sqrt{(1+(\theta-1)(u+v))^2-4 u v \theta(\theta-1)}}{2(\theta-1)}$ for $\theta\neq 1$ else $\Pi$ \\
            & Raftery & $C^{\text{Ra}}_{\delta}$ & $u \wedge v + (1-\delta)(uv)^{\frac1{1-\delta}}\inbrackets{1-(u\vee v)^{-\frac{1+\delta}{1-\delta}}}$\\
            \bottomrule
        \end{tabular}
        }
    \end{center}
    \caption{
        Overview of Archimedean (Arch.), extreme-value (EV), elliptical (Ell.), and unclassified (Uncl.) copula families for which dependence properties are studied in this paper.
        The Gumbel-Hougaard family also belongs to the class of EV copula families.
        \(\Phi\), \(T_\nu\) and \(F\) denote the standard normal, Student-t (with \(\nu\) degrees of freedom) and standard Laplace distribution function, respectively, and
        $K_\nu(x):=\left(\frac{2}{x}\right)^\nu \frac{\Gamma(\nu+1 / 2)}{\sqrt{\pi}} \int_0^{\infty}\left(1+u^2\right)^{-(\nu+1 / 2)} \cos (r u) \de u$ for $x>0,\nu > -1/2$ is the modified Bessel function.
    }
    \label{tab:copulas}
\end{table}

As motivated in the introduction, in this section, we study various dependence properties of bivariate copula families that are often used in applications.
More precisely, we consider the families listed in Table \ref{tab:copulas}.
For each copula family, we verify or reference for which parameters the copulas are CI, CD or \(\TP\,,\) and whether the families are increasing or decreasing in the lower orthant order and in the recently established Schur order for copula derivatives.
We present the results for Archimedean copula families in Tables \ref{tab:arch_overview} and \ref{tab:arch_results}; for the extreme-value copula families, the elliptical copulas families and the unclassified copula families, we refer to Tables \ref{tab:non_arch_overview} and \ref{tab:non_arch_results}. 
Note that the independence copula is member of many copula families. 
Hence, whenever such families are lower orthant ordered, they consist of positive lower orthant dependent (PLOD) and/or negative lower orthant dependent copulas. \\
In order to verify the dependence properties, we provide in the following subsection sufficient positive/negative dependence and ordering conditions for the families from the respective classes of copulas.
As we study in Section \ref{secmoa}, various ordering properties imply monotonicity of Chatterjee's xi, Spearman's rho and Kendall's tau with respect to the parameter of the underlying copula family.
We also provide some closed-form expressions for these measures in dependence on the copula parameter.

\subsection{Ordering and positive dependence properties}

While characterizations of the lower orthant order in terms of the generator or correlation parameter are well-known for Archimedean and elliptical copula families (see the Propositions \ref{prop_ord_arch_cop} \eqref{ord_arch_cop_1} and \ref{prop:ordering_elliptical_copulas} \eqref{ord_ell_cop_1} below), we establish in Theorem \ref{thm:ordering_ev_copulas} the equivalence between lower orthant ordering of extreme-value copulas and pointwise ordering of the associated Pickands dependence functions.
For deriving ordering results with respect to the Schur order for copula derivatives, we make use of positive dependence properties for the respective classes of copulas.
Before proceeding with the specific classes of copulas, we give two general results concerning the relation between the Schur order for copula derivatives and the lower orthant order as well as dependence properties for survival copulas.\\
The following result characterizes the Schur order for copula derivatives in terms of the pointwise ordering of the rearranged copulas.
If a copula is CI, it coincides with its increasing rearranged copula.
Hence, for families of CI copulas, the lower orthant order is equivalent to the Schur order for copula derivatives with respect to the first (and similarly to the second) component.

\begin{proposition}[Rearranged copulas]\label{proprearcop}~\\
    For \(D,E\in \cC_2\,,\) 
    the following are equivalent:
    \begin{enumerate}[(i)]
        \item \label{proprearcop1} $ D\leq_{\partial_1 S} E$ 
        \item \label{proprearcop2}$ D_\uparrow \leq_{lo} E_\uparrow $
        \item \label{proprearcop3}$ D_\downarrow \geq_{lo} E_\downarrow$
    \end{enumerate}
\end{proposition}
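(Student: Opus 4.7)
The plan is to deduce the equivalences directly from Lemma \ref{lem:extremal_elements} together with Lemma \ref{lem:3.16}, using the structural facts that $C_\uparrow$ is CIS and that $C =_{\partial_1 S} C_\uparrow =_{\partial_1 S} C_\downarrow$.

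For the equivalence (\ref{proprearcop1})$\Leftrightarrow$(\ref{proprearcop2}), I would first observe that by Lemma \ref{lem:extremal_elements}(iv) applied to $D$ and to $E$, we have $D =_{\partial_1 S} D_\uparrow$ and $E =_{\partial_1 S} E_\uparrow$. Consequently, transitivity of $\leq_{\partial_1 S}$ gives $D \leq_{\partial_1 S} E$ if and only if $D_\uparrow \leq_{\partial_1 S} E_\uparrow$. Now by Lemma \ref{lem:extremal_elements}(ii), both rearranged copulas $D_\uparrow$ and $E_\uparrow$ are CIS, so Lemma \ref{lem:3.16}(ii) applies and yields the equivalence $D_\uparrow \leq_{\partial_1 S} E_\uparrow \Longleftrightarrow D_\uparrow \leq_{lo} E_\uparrow$.

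For the equivalence (\ref{proprearcop2})$\Leftrightarrow$(\ref{proprearcop3}), the key ingredient is the reflection formula $C_\downarrow(u,v) = u - C_\uparrow(1-u,v)$ from Lemma \ref{lem:extremal_elements}(iii), which transforms lower orthant comparisons of $C_\uparrow$'s into reversed lower orthant comparisons of $C_\downarrow$'s. Explicitly, for any $(u,v)\in[0,1]^2$ one has
\begin{align*}
    D_\uparrow(u,v) \leq E_\uparrow(u,v) \quad\text{for all }(u,v)\in[0,1]^2 &\Longleftrightarrow D_\uparrow(1-u,v) \leq E_\uparrow(1-u,v) \quad\text{for all }(u,v) \\
    &\Longleftrightarrow u - D_\uparrow(1-u,v) \geq u - E_\uparrow(1-u,v) \quad\text{for all }(u,v) \\
    &\Longleftrightarrow D_\downarrow(u,v) \geq E_\downarrow(u,v) \quad\text{for all }(u,v),
\end{align*}
which is precisely $D_\uparrow \leq_{lo} E_\uparrow \Longleftrightarrow D_\downarrow \geq_{lo} E_\downarrow$.

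There is no real obstacle here, since all the heavy lifting has been done in Lemma \ref{lem:extremal_elements} and Lemma \ref{lem:3.16}; the proposition is essentially a repackaging of those results. The only point that requires a small check is that $\leq_{\partial_1 S}$ is transitive (so that $D \leq_{\partial_1 S} E$ may be replaced by $D_\uparrow \leq_{\partial_1 S} E_\uparrow$), but this is immediate from the transitivity of the Schur order $\prec_S$ for integrable functions, applied pointwise in $v$.
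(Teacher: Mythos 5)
Your proof is correct and follows essentially the same route as the paper: both arguments rest on Lemma \ref{lem:extremal_elements} (parts (ii)--(iv)) together with Lemma \ref{lem:3.16}(ii), and the equivalence of (ii) and (iii) via the reflection identity is exactly the paper's (unspelled-out) step. The only cosmetic difference is in (i)$\Rightarrow$(ii): the paper argues via the inclusion $\cC^D\subseteq\cC^E$ and the maximality of $E_\uparrow$, whereas you replace $D,E$ by $D_\uparrow,E_\uparrow$ using part (iv) and transitivity and then apply Lemma \ref{lem:3.16}(ii) in both directions, which is a slightly more symmetric but equivalent packaging.
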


\begin{proof}
    Statement \eqref{proprearcop1} is equivalent to $\cC^D\subseteq\cC^E$.
    Applying Lemma \ref{lem:extremal_elements}, it follows that $D_\uparrow\in\cC^E$ and $ D_\uparrow \leq_{lo} E_\uparrow $.
    Hence, \eqref{proprearcop1} implies \eqref{proprearcop2}.
    To show the reverse direction, we obtain from Lemma \ref{lem:extremal_elements} (ii) that $D_\uparrow$ and $E_\uparrow$ are CIS.
    Hence, by Lemma \ref{lem:3.16} (ii), we have $ D_\uparrow\leq_{\partial_1 S} E_\uparrow$.
    Since \(E_\uparrow =_{\partial_1 S} E\) due to Lemma \ref{lem:extremal_elements} (iv), it follows that $D_\uparrow\in\cC^{E_\uparrow}=\cC^E$, from which we get $ D\leq_{\partial_1 S} E$.
    The equivalence of (ii) and (iii) is given by Lemma \ref{lem:extremal_elements}\,(iii).
\end{proof}

For a bivariate copula \(C\,,\) the survival copula \(\hat{C}\) associated with \(C\) is defined by
\begin{align}\label{defsurvcop}
    \hat{C}(u,v) = u+v+C(1-u,1-v)-1\,, \quad(u,v)\in [0,1]^2\,,
\end{align}
see, e.g., \cite[Definition 1.7.18]{Durante-2016}.
Due to the following result, all dependence and monotonicity properties in Tables \ref{tab:arch_results} and \ref{tab:non_arch_results} transfer to the associated survival copula families.

\begin{proposition}[Survival copula]~\\
    Let \(D\) and \(E\) be a bivariate copula.
    Then, the following statements hold true.
    \begin{enumerate}[(i)]
        \item \label{frm_lo_survival} \(D\leq_{lo} E\) if and only if \(\hat{D}\leq_{lo} \hat{E}\,.\)
        \item \label{frm_cis_survival}\(D\) is CIS if and only if \(\hat{D}\) is CIS
        \item \label{frm_partial_2_survival}\(D\leq_{\partial_1 S} E\) if and only if \(\hat{D}\leq_{\partial_1 S} \hat{E}\,.\)
    \end{enumerate}
\end{proposition}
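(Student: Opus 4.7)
The plan is to reduce all three parts to the explicit formula $\hat{C}(u,v) = u+v-1+C(1-u,1-v)$ from \eqref{defsurvcop} and to exploit the bijection $\sigma\colon (u,v)\mapsto (1-u,1-v)$ of $[0,1]^2$.

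For part \eqref{frm_lo_survival}, I would simply subtract the defining formulas to obtain $\hat{E}(u,v) - \hat{D}(u,v) = E(1-u,1-v) - D(1-u,1-v)$ for all $(u,v)\in[0,1]^2$. Since $\sigma$ is a bijection of $[0,1]^2$, the pointwise inequality $\hat{D}\leq \hat{E}$ on $[0,1]^2$ is equivalent to the pointwise inequality $D\leq E$ on $[0,1]^2$.

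For part \eqref{frm_cis_survival}, differentiating the survival-copula formula yields $\partial_1 \hat{D}(u,v) = 1 - \partial_1 D(1-u,1-v)$. Recalling that a copula $C$ is CIS if and only if $u\mapsto \partial_1 C(u,v)$ is decreasing for every $v\in (0,1)$ (since $F_{V|U=u}(v) = \partial_1 C(u,v)$), the two sign-reversals in $1-\partial_1 D(1-u,1-v)$ compose to monotonicity-preserving, so that $u\mapsto \partial_1 \hat{D}(u,v)$ is decreasing for every $v\in(0,1)$ if and only if $u\mapsto \partial_1 D(u,v')$ is decreasing for every $v'\in(0,1)$.

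For part \eqref{frm_partial_2_survival}, the key ingredient is the rearrangement identity: for an integrable $f\colon (0,1)\to\R$, the function $h(u):= 1-f(1-u)$ has decreasing rearrangement $h^*(t) = 1 - f^*(1-t)$. This holds because $h(U)\eqd 1-f(U)$ under $U\sim U(0,1)$ (using that $1-U\eqd U$), and the right-hand side is manifestly decreasing in $t$ since $f^*$ is decreasing. Applying this with $f = \partial_1 D(\cdot, 1-v)$ and integrating, I would derive
\begin{align*}
    \int_0^x (\partial_1 \hat{D}(\cdot,v))^*(t)\de t = x - (1-v) + \int_0^{1-x} (\partial_1 D(\cdot,1-v))^*(s)\de s\,,
\end{align*}
where the constant $1-v$ comes from $\int_0^1 \partial_1 D(s,1-v)\de s = D(1,1-v) - D(0,1-v) = 1-v$. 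The analogous identity for $\hat{E}$ and the bijection $(x,v)\mapsto (1-x,1-v)$ of $(0,1)^2$ would then show that the majorization inequalities defining $\hat{D}\leq_{\partial_1 S}\hat{E}$ at $(x,v)$ translate exactly to those defining $D\leq_{\partial_1 S} E$ at $(1-x,1-v)$. The equality condition required in \eqref{defschurfun} is automatic, since $\int_0^1 \partial_1 C(u,v)\de u = v$ for any copula $C$ and any $v\in(0,1)$.

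The main obstacle is the rearrangement identity $(1-f(1-\cdot))^*(t) = 1-f^*(1-t)$ used in part \eqref{frm_partial_2_survival}; it is a short distributional argument but easy to misstate, so I would be explicit about the step $h(U)\eqd 1-f(U)$ via $1-U\eqd U$ and about the monotonicity check for $1-f^*(1-t)$ before appealing to the essential uniqueness of the decreasing rearrangement.
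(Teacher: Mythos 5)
Your proof is correct and follows essentially the same route as the paper: all three parts rest on the identity $\partial_1\hat{D}(u,v)=1-\partial_1 D(1-u,1-v)$ and the reflection $(u,v)\mapsto(1-u,1-v)$, with part (ii) differing only in that you characterize CIS via monotonicity of $u\mapsto\partial_1 C(u,v)$ where the paper uses the equivalent concavity-in-$u$ criterion. In part (iii) your explicit rearrangement identity $(1-f(1-\cdot))^*(t)=1-f^*(1-t)$ and the resulting affine transformation of the partial integrals correctly substantiates, from the definition \eqref{defschurfun}, the step the paper dispatches by appealing to rearrangement invariance of $\prec_S$ (which strictly also requires that $f\prec_S g$ implies $1-f\prec_S 1-g$), so this is a welcome filling-in of detail rather than a genuinely different argument.
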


\begin{proof}
    Since $\hat{\hat{D}}=D$, it suffices to show only one implication for the statements.
    Statement \eqref{frm_lo_survival} follows from the definition of a survival copula in \eqref{defsurvcop}.
    For \eqref{frm_cis_survival}, note that $D$ is CIS if and only if it is concave in $u$ for all $v$, see, e.g., \cite[Corollary 5.2.11]{Nelsen-2006}.
    Since $D$ is concave in $u$ for all $v$, also $u \mapsto u + v - 1 + C(1-u, 1-v)$ is concave for all $v$.
    Hence, $\hat{D}$ is CIS. \\
    Lastly, to derive \eqref{frm_partial_2_survival}, let $v\in[0,1]$ be arbitrary and notice that $\partial_1\hat{D}(\cdot,v) = 1- (\partial_1 D)(1-\cdot, 1-v)$.
    From the hypothesis follows $\partial_1 D(\cdot, 1-v)\prec_S \partial_1 E(\cdot, 1-v)$.
    Since the Schur order is invariant under rearrangements, one gets $ 1-\partial_1 D(\cdot, 1-v)\prec_S 1- \partial_1 E(\cdot, 1-v)$ and thus
    \[ 
        \partial_1\hat{D}(\cdot, v)
        =_S \partial_1\hat{D}(1-\cdot, v)
        = 1- \partial_1 D(\cdot, 1-v)
        \prec_S
        1-\partial_1 E(\cdot, 1-v)
        =\partial_1\hat{E}(1-\cdot, v)
        =_S\partial_1\hat{E}(\cdot, v)
    .\]
    Since $v$ is arbitrary, we conclude that $\hat{D}\leq_{\partial_1 S} \hat{E}$.
\end{proof}

\subsubsection{Archimedean copulas}\label{secdeppropArch}

\begin{table}[t!]
    \begin{center}
        \scalebox{0.8}{
        \begin{tabular}{llllll}
            \toprule
            Family & $\theta$-Interval & $\varphi(0)$ & Generator $\varphi(t)$ & Inverse Generator $\psi(y)$ & Special/Limiting Cases \\
            \midrule
            Clayton& $[-1,\infty)$ & $-\frac{1}{\theta}\text{ if }\theta <0$&  $\frac{-1 + t^{- \theta}}{\theta}$ & $\left(\theta y + 1\right)^{- \frac{1}{\theta}}\1_{\setof{\theta > 0\vee y\leq -1/\theta }}$ & $C^{\text{Cl}}_{-1} = W$, $C^{\text{Cl}}_0=\Pi$, \\
            & &else $\infty$ & & & $C^{\text{Cl}}_1 = \frac{\Pi}{\Sigma - \Pi}$, $C^{\text{Cl}}_{\infty} = M$ \\
            Nelsen2& $[1,\infty)$ & 1 & $\left(1 - t\right)^{\theta}$ & $\left(1 - y^{\frac{1}{\theta}}\right)\1_{\setof{y\leq 1}}$ & $C^{\text{N2}}_{1} = W$, $C^{\text{N2}}_{\infty} = M$ \\
            AMH & $[-1,1]$ & $\infty$ &  $\log{\left(\frac{- \theta \left(1 - t\right) + 1}{t} \right)}$&         $\frac{\theta - 1}{\theta - e^{y}}$ & $C^{\text{AMH}}_{0} = \Pi$,\\
            & & & & & $C^{\text{AMH}}_1 = \frac{\Pi}{\Sigma - \Pi}$ \\
            Gum.-Ho. & $[1,\infty)$ & $\infty$&$\left(- \log{\left(t \right)}\right)^{\theta}$ & $e^{- y^{\frac{1}{\theta}}}$ & $C^{\text{GH}}_{1} = \Pi$, $C^{\text{GH}}_{\infty} = M$ \\
            Frank & $\R$ & $\infty$ &  $- \log{\left(\frac{-1 + e^{- t \theta}}{-1 + e^{- \theta}} \right)}$& $\frac{\theta + y - \log{\left(- e^{\theta} + e^{\theta + y} + 1 \right)}}{\theta}$& $C^{\text{Fra}}_{-\infty} = W,C^{\text{Fra}}_{0} = \Pi$,  \\
            & & & & & $C^{\text{Fra}}_{\infty} = M$ \\
            Joe & $[1,\infty)$ & $\infty$& $- \log{\left(1 - \left(1 - t\right)^{\theta} \right)}$ & $1 - \left(1- e^{- y}\right)^{\frac{1}{\theta}}$ & $C^{\text{Joe}}_{1} = \Pi$, $C^{\text{Joe}}_{\infty} = M$  \\
            Nelsen7 & $[0,1]$ & $\log\of{\frac1{1-\theta}}$ &  $- \log{\left(t \theta - \theta + 1 \right)}$ &$\left(\left(1 - \frac{1}{\theta} + \frac{e^{- y}}{\theta}\right)\right.$ & $C^{\text{N7}}_{0} = W$, $C^{\text{N7}}_{1} = \Pi$  \\
            & & & &$\left.\quad\cdot\1_{\setof{y\leq -\log(1-\theta)}}\right)$ & \\
            Nelsen8 & $[1,\infty)$ & 1 & $\frac{1 - t}{t \left(\theta - 1\right) + 1}$ & $\frac{1 - y}{\theta y - y + 1}\1_{\setof{y\leq 1}}$ & $C^{\text{N8}}_{1} = W$, $C^{\text{N8}}_{\infty} = \frac{\Pi}{\Sigma - \Pi}$ \\
            Gum.-Ba. & $[0,1]$ & $\infty$& $\log{\left(- \theta \log{\left(t \right)} + 1 \right)}$ & $e^{\frac{1 - e^{y}}{\theta}}$ & $C^{\text{GB}}_0 = \Pi$ \\
            Nelsen10 & $[0,1]$ & $\infty$&  $\log{\left(-1 + 2 t^{- \theta} \right)}$ &  $\left(\frac{2}{e^{y} + 1}\right)^{\frac{1}{\theta}}$ & $C^{\text{N10}}_0 = \Pi$ \\
            Nelsen11 &$[0,1/2]$ & $\log(2)$ & $\log{\left(2 - t^{\theta} \right)}$ & $\left(2 - e^{y}\right)^{\frac{1}{\theta}}\1_{\setof{y\leq\log(2)}}$ & $C^{\text{N11}}_0 = \Pi$ \\
            Nelsen12 & $[1,\infty)$ & $\infty$& $\left(-1 + \frac{1}{t}\right)^{\theta}$ & $\frac{1}{y^{\frac{1}{\theta}} + 1}$ & $C^{\text{N12}}_{1} = \frac{\Pi}{\Sigma - \Pi}$, $C^{\text{N12}}_{\infty} = M$ \\
            Nelsen13 & $[0,\infty)$ & $\infty$& $\left(1 - \log{\left(t \right)}\right)^{\theta} - 1$ &$e^{1 - \left(y + 1\right)^{\frac{1}{\theta}}}$ & $C^{\text{N13}}_{1} = \Pi$, $C^{\text{N13}}_{\infty} = M$  \\
            Nelsen14 & $[1,\infty)$ & $\infty$&    $\left(-1 + t^{- \frac{1}{\theta}}\right)^{\theta}$ &  $\left(y^{\frac{1}{\theta}} + 1\right)^{- \theta}$ & $C^{\text{N14}}_{1} = \frac{\Pi}{\Sigma - \Pi}$, $C^{\text{N14}}_{\infty} = M$  \\
            Gen.-Gh. &$[1,\infty)$ & 1 &  $\left(1 - t^{\frac{1}{\theta}}\right)^{\theta}$ &  $\left(1 - y^{\frac{1}{\theta}}\right)^{\theta}\1_{\setof{y\leq1}}$ &  $C^{\text{GG}}_{1} = W$, $C^{\text{GG}}_{\infty} = M$ \\
            Nelsen16 &$[0,\infty)$ & $1\text{ if }\theta = 0$ & $\left(1 - t\right) \left(1 + \frac{\theta}{t}\right)$ &  $\left(\frac{1 - y- \theta}{2} 
            \right.$ &  $C^{\text{N16}}_{0} = W$,  \\
            && else $\infty$ & & $\left.~ + \frac{\sqrt{\theta^{2} + 2 \theta y + 2 \theta + y^{2} - 2 y + 1}}{2}\right)$ & $C^{\text{N16}}_{\infty} = \frac{\Pi}{\Sigma - \Pi}$ \\
            Nelsen17 &$\R\setminus\setof{0}$ & $\infty$&  $- \log{\left(\frac{-1 + \left(t + 1\right)^{- \theta}}{-1 + 2^{- \theta}} \right)}$ &     $\left(\frac{2^{\theta} e^{y}}{2^{\theta} e^{y} - 2^{\theta} + 1}\right)^{\frac{1}{\theta}} - 1$ & $C^{\text{N17}}_{-1} = \Pi$, $C^{\text{N17}}_{\infty} = M$ \\
            Nelsen18 & $[2,\infty)$ & $e^{-\theta}$ &$e^{\frac{\theta}{t - 1}}$ &  $\inbrackets{\frac{\theta}{\log{\left(y \right)}} + 1}\1_{\setof{y\geq e^{-\theta}}}$ & $C^{\text{N18}}_{\infty} = M$ \\
            Nelsen19 & $[0,\infty)$ & $\infty$&    $- e^{\theta} + e^{\frac{\theta}{t}}$ & $\frac{\theta}{\log{\left(y + e^{\theta} \right)}}$ & $C^{\text{N19}}_{0} = \frac{\Pi}{\Sigma - \Pi}$, $C^{\text{N19}}_{\infty} = M$ \\
            Nelsen20 & $[0,\infty)$ & $\infty$&    $e^{t^{- \theta}} - e$ & $\log{\left(y + e \right)}^{- \frac{1}{\theta}}$ & $C^{\text{N20}}_{0} = \Pi$, $C^{\text{N20}}_{\infty} = M$ \\
            Nelsen21 & $[1,\infty)$ & 1 & $1 - \left(1 - \left(1 - t\right)^{\theta}\right)^{\frac{1}{\theta}}$ & $\left(\left(1 - (1 - \left(1 - y\right)^{\theta})^{1/\theta}\right)\right.$ & $C^{\text{N21}}_{1} = W$, $C^{\text{N21}}_{\infty} = M$ \\
            & & & & $\left.\quad\cdot\1_{\setof{y\leq \frac{\pi}{2}}}\right)$ & \\
            Nelsen22 & $[0,1]$ & $\pi/2$  &  $- \operatorname{asin}{\left(t^{\theta} - 1 \right)}$ &   $\left(1 - \sin{\left(y \right)}\right)^{\frac{1}{\theta}}\1_{\setof{y\leq \pi/2}}$ & $C^{\text{N22}}_{0} = \Pi$ \\
            \bottomrule
        \end{tabular}
        }
    \end{center}
    \caption{ 
        Overview of Archimedean copula families, for which dependence properties are given in Table \ref{tab:arch_results}.
        The generators are taken from \cite[Table 3.2]{Nelsen-2006}.
        Generator and inverse generator for $W$ are $\varphi(t)=1-t$ and $\psi(y)=(1-y)_+$, for $\Pi$ they are $\varphi(t)=-\ln(t)$ and $\psi(y)=e^{-y}$, and for $\frac{\Pi}{\Sigma - \Pi}$ they are $\varphi(t)=t^{-1} -1$ and $\psi(y)=(y+ 1)^{-1}$.
        $M$ is not Archimedean.
    }
    \label{tab:arch_overview}
\end{table}

Positive dependence concepts for Archimedean copulas can be characterized in terms of their generators.
For a bivariate Archimedean copula \(C\) with sufficiently smooth inverse generator \(\psi\) it holds that
\begin{align}\label{lempdparch}
    C \text{ is CI\phantom{\(\TP\)}} &~ \Longleftrightarrow \quad -\psi'\phantom{''} \text{ is log-convex on the positive real line,}\\
    \label{lempdparch2} C \text{ is \(\TP\)\phantom{CI}} &~ \Longleftrightarrow \quad \phantom{-}\psi''\phantom{'} \text{ is log-convex on the positive real line,}
\end{align}
see \cite[Theorems 2.8 and 2.11]{Mueller-2005}.
Since Archimedean copulas are symmetric, the concepts CIS and CI coincide.
Concerning negative dependence, it follows similarly to the proof of \cite[Theorems 2.8]{Mueller-2005} that
\begin{align}\label{lempdparch_cd}
    C \text{ is CD\phantom{\(\TP\)}} &~ \Longleftrightarrow \quad -\psi'\phantom{''} \text{ is log-concave on the positive real line.}\
\end{align}
We make use of the positive and negative dependence properties \eqref{lempdparch} and \eqref{lempdparch_cd} to give sufficient conditions for the Schur ordering of bivariate Archimedean copula derivatives.
To this end, a function \(f\colon [0,\infty)\to [0,\infty)\) is said to be \textit{subadditive} if \(f(x+y)\leq f(x)+f(y)\) for all \(x,y\in [0,\infty)\,.\)

\begin{table}[t!]
    \begin{center}
        \scalebox{0.85}{
        \begin{tabular}{llllllll}
            \toprule 
            Copula & CI/CD & $\TP$ & $\leq_{lo}$ & $\leq_{\partial S}$ & $\lambda_L$ & $\lambda_U$\\
            \midrule
            Clayton & $\guparrow$~iff $\theta\geq 0$, & \cmark~iff $\theta\geq 0$ & $\gnearrow$ & $\gnearrow$ if $\theta\geq 0$, & $2^{-1/\theta}\1_{\setof{\theta\geq0}}$& 0  \\
            &  $\gdownarrow$~iff $\theta\leq 0$ & &  & $\gsearrow$ if $\theta\leq 0$ & &  \\
            Nelsen2 & \xmark~iff $\theta>1$& \xmark~& $\gnearrow$ & $\text{\xmark}^*$ & 0& $2 - 2^{1/\theta}$  \\
            Ali-Mikhail-Haq & $\guparrow$~iff $\theta \geq 0$, & \cmark~iff $\theta \geq 0$ & $\gnearrow$ & $\gnearrow$ if $\theta\geq 0$ & 0 & 0  \\
            &  $\gdownarrow$~iff $\theta\leq 0$ & &  & $\gsearrow$ if $\theta\leq 0$ & &  \\
            Gumbel-Hougaard & $\guparrow$ & \cmark~& $\gnearrow$ &$\gnearrow$&0 & $2 - 2^{1/\theta}$  \\
            Frank & $\guparrow$~iff $\theta \geq 0$, & \cmark~iff $\theta \geq 0$ & $\gnearrow$ & $\gnearrow$ if $\theta\geq 0$ & 0 &0 \\
            &  $\gdownarrow$~iff $\theta\leq 0$ & &  & $\gsearrow$ if $\theta\leq 0$ & &  \\
            Joe & $\guparrow$ & \cmark~& $\gnearrow$ & $\gnearrow$ &0 &$2 - 2^{1/\theta}$  \\
            Nelsen7  & $\gdownarrow$ & \xmark~iff $\theta < 1$ & $\gnearrow$ & $\gsearrow$ &0  &0 \\
            Nelsen8  & \xmark~iff $\theta>1$ & \xmark~& $\gnearrow$ & $\text{\xmark}^*$ & 0&0 \\
            Gumbel-Barnett  & $\gdownarrow$& \xmark~iff $\theta > 0$ & $\gsearrow$ & $\gnearrow$ &0 &0\\
            Nelsen10 & $\gdownarrow$ & \xmark~iff $\theta > 0$ & \xmark & $\text{\xmark}$~& 0&0  \\
            Nelsen11 & $\gdownarrow$ &\xmark~iff $\theta > 0$ & $\gsearrow$ & $\gnearrow$ &0 & 0\\
            Nelsen12 & $\guparrow$ & \cmark~ & $\gnearrow$ & $\gnearrow$ & $2^{-1/\theta}$& $2 - 2^{1/\theta}$ \\
            Nelsen13 & $\guparrow$~iff $\theta \geq 1$ & \cmark~iff $\theta \geq 1$ & $\gnearrow$ & $\gnearrow$ if $\theta\geq 1$ &0 & 0 \\
            Nelsen14 & $\guparrow$& \cmark~ & $\gnearrow$ & $\gnearrow$ & $1/2$ & $2 - 2^{1/\theta}$ \\
            Genest-Ghoudi & \xmark~iff $\theta>1$& \xmark~& $\gnearrow$ &  $\text{\xmark}^*$ &0 & $2 - 2^{1/\theta}$ \\
            Nelsen16 & $\guparrow$~iff $\theta\geq 3$ & \cmark~iff $\theta\geq 3+2\sqrt{2}$ & $\gnearrow$ & $\gnearrow$~if $\theta \geq 3$& $1/2$ & 0 \\
            Nelsen17 & $\guparrow$~iff $\theta \geq -1$, & \cmark~iff $\theta \geq -1$ & $\gnearrow$ & $\gnearrow$ if $\theta\geq -1$, & 0& 0\\
            & $\gdownarrow$~iff $\theta\leq -1$ & & & $\gsearrow$ if $\theta\leq -1$ & &  \\
            Nelsen18 & \xmark~& \xmark~ & $\gnearrow$ & $\text{\xmark}^*$ & 0&1 \\
            Nelsen19 & $\guparrow$ & \cmark~& $\gnearrow$ & $\gnearrow$ & 1 & 0 \\
            Nelsen20 & $\guparrow$ & \cmark~ & $\gnearrow$ & $\gnearrow$ & 1 & 0 \\
            Nelsen21 & \xmark~iff $\theta>1$& \xmark~& $\gnearrow$ &  $\text{\xmark}^*$ &0 & $2 - 2^{1/\theta}$ \\
            Nelsen22 & $\gdownarrow$ & \xmark~iff $\theta > 0$ & $\gsearrow$ & $\gnearrow$ & 0&0 \\
            \bottomrule
        \end{tabular}
        }
    \end{center}
    \caption{
        Dependence properties of Archimedean copula families from Table \ref{tab:arch_results}.
        For example, '$\guparrow$~iff $\theta\geq 0$' in the CI/CD column for the Clayton copula means that the copula is conditionally increasing (CI) if and only if $\theta\geq 0$.
        Similarly, '\xmark' in the \(\TP\) column for the Nelsen2 copula family means that the copulas are not \(\TP\) for any parameter.
        The columns \(\leq_{lo}\) and \(\leq_{\partial S}\) indicate whether (and for which parameters) the family is increasing (\(\gnearrow\)) or decreasing (\(\gsearrow\)) with respect to the lower orthant order and Schur order for copula derivatives, respectively, see Section \ref{secstoo} for the definitions.
        Results marked with * are obtained from numerical checks and neither referenced nor proved.
    }
    \label{tab:arch_results}
\end{table}

\begin{proposition}[Ordering Archimedean copulas]\label{prop_ord_arch_cop}~\\
    Let \(D_1\) and \(D_2\) be Archimedean copulas with inverse generator \(\psi_1\) and \(\psi_2\,,\) respectively.
    Then, the following statements hold true.
    \begin{enumerate}[(i)]
        \item \label{ord_arch_cop_1} \(D_1\leq_{lo} D_2\) if and only if \(\psi^{-1}_1\circ \psi_2\) is subadditive.
        \item If \(-\psi_1'\) and \(-\psi_2'\) are log-convex, then subadditivity of \(\psi_1^{-1}\circ \psi_2\) is equivalent to \(D_1\leq_{\partial S} D_2\,.\)
        \item If \(-\psi_1'\) and \(-\psi_2'\) are log-concave, then subadditivity of \(\psi_1^{-1}\circ \psi_2\) is equivalent to \(D_1\geq_{\partial S} D_2\,.\)
    \end{enumerate}
\end{proposition}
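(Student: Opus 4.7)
The plan is to reduce parts (ii) and (iii) to part (i) by invoking the dependence characterizations \eqref{lempdparch} and \eqref{lempdparch_cd} together with Lemmas~\ref{lem:3.16} and~\ref{lem:3.16_cds}. Part (i) is classical and can be dispatched by a direct manipulation.

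For (i), I would record the short Genest--MacKay-style argument (cf. \cite[Theorem~4.4.2]{Nelsen-2006}). Writing $\varphi_i := \psi_i^{-1}$, the subadditivity of $\varphi_1\circ\psi_2$, evaluated at $x := \varphi_2(u)$ and $y := \varphi_2(v)$, reads
\[
    \varphi_1(u) + \varphi_1(v) \;\geq\; \varphi_1\!\bigl(\psi_2(\varphi_2(u)+\varphi_2(v))\bigr) \;=\; \varphi_1\!\bigl(D_2(u,v)\bigr),
\]
and applying the strictly decreasing function $\psi_1$ to both sides yields $D_1(u,v) \leq D_2(u,v)$. The argument is reversible, establishing the desired equivalence.

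For (ii), the hypothesis combined with \eqref{lempdparch} gives that both $D_1$ and $D_2$ are CI. Since Archimedean copulas are symmetric in their arguments, the coordinate-swapped vectors are CIS as well, so Lemma~\ref{lem:3.16}(ii) applied in each coordinate produces
\[
    D_1 \leq_{\partial_1 S} D_2 \;\Longleftrightarrow\; D_1 \leq_{lo} D_2 \;\Longleftrightarrow\; D_1 \leq_{\partial_2 S} D_2,
\]
so that $D_1 \leq_{\partial S} D_2$ is equivalent to $D_1 \leq_{lo} D_2$, which by (i) is equivalent to subadditivity of $\psi_1^{-1}\circ\psi_2$. Part (iii) follows by the same scheme, with \eqref{lempdparch_cd} replacing \eqref{lempdparch} and Lemma~\ref{lem:3.16_cds}(ii) replacing Lemma~\ref{lem:3.16}(ii); the only structural difference is that the CDS lemma reverses the orthant inequality, so subadditivity of $\psi_1^{-1}\circ\psi_2$ — equivalent by (i) to $D_1 \leq_{lo} D_2$ — becomes equivalent to $D_1 \geq_{\partial S} D_2$.

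I do not expect any serious obstacle, as all ingredients are already stated. The one point requiring care is that Lemmas~\ref{lem:3.16} and~\ref{lem:3.16_cds} are formulated only for $\leq_{\partial_1 S}$, whereas the claims concern $\leq_{\partial S}$; this is handled by the symmetry $D_i(u,v) = D_i(v,u)$ of Archimedean copulas, which forces the CIS/CDS conditions in the two coordinates to coincide and hence makes the single-coordinate lemmas suffice.
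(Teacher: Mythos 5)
Your proposal is correct and follows essentially the same route as the paper: part (i) is the classical Genest--MacKay/Nelsen argument (the paper simply cites \cite[Theorem 4.4.2]{Nelsen-2006} where you write it out), and parts (ii) and (iii) are reduced to (i) via \eqref{lempdparch}, \eqref{lempdparch_cd} and Lemmas \ref{lem:3.16} and \ref{lem:3.16_cds}, exactly as in the paper. Your extra remark that the symmetry of Archimedean copulas lets the single-coordinate Lemmas \ref{lem:3.16} and \ref{lem:3.16_cds} yield the full $\leq_{\partial S}$ conclusion is a detail the paper leaves implicit, and it is handled correctly.
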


\begin{proof}
    The first statement is a well-known result given, e.g., in \cite[Theorem 4.4.2]{Nelsen-2006}.
    For the second statement, notice that the log-convexity of \(-\psi_1'\) and \(-\psi_2'\) yields that $D_1$ and $D_2$ are CI, see \eqref{lempdparch}, and from (i) one obtains $D_1\leq_{lo} D_2$.
    Hence, the claim follows from the equivalence of the lower orthant order and the Schur order for copula derivatives whenever the underlying copulas are CI, see Lemma \ref{lem:3.16}.
    The third statement follows similarly from \eqref{lempdparch_cd} and Lemma \ref{lem:3.16_cds}.
\end{proof}

~\\
~\\
~\\
\subsubsection{Extreme-values copulas}\label{secdeppropEVC}

\begin{table}[b!]
    \begin{center}
        \scalebox{0.8}{	
        \begin{tabular}{lllllcc}
            \toprule
            Type & Copula & $\phantom{-}$Parameters & Pickands dependence function \(A(t)\) & Special / Limiting cases \\
            \midrule
            EV & BB5 &  $\phantom{-}1\leq\theta,~0<\delta$ & $\left(t^\theta+(1-t)^\theta  \right.$ & $C^{\text{BB5}}_{\theta, 0} = C^{\text{GH}}_{\theta}$,  $C^{\text{BB5}}_{\theta, \infty} = M$,\\
            & & & $\left.-\left[(1-t)^{-\theta \delta}+t^{-\theta \delta}\right]^{-1 / \delta}\right)^{1 / \theta}$ & $C^{\text{BB5}}_{1, \delta} = C^{\text{Gal}}_{\delta}$ \\
            & Cuad.-Au. & $\phantom{-}0 \leq \delta \leq 1$ & $1 - \delta\min{\setof{1-t, t}}$ & $C^{\text{CA}}_{0} = \Pi$, $C^{\text{CA}}_{1} = M$ \\
            & Galambos & $\phantom{-}0<\delta$ & $1 - \inbrackets{t^{-\delta} + (1-t)^{-\delta}}^{-1/\delta}$ & $C^{\text{Gal}}_0=\Pi$, $C^{\text{Gal}}_{\infty}=M$ \\
            & Gum.-Hou. & $\phantom{-}1\leq\theta$  & $\inbrackets{t^\theta + (1-t)^\theta}^{1/\theta}$ & $C^{\text{GH}}_1 = \Pi$, $C^{\text{GH}}_\infty=M$ \\
            & Hüsl.-Rei. &$\phantom{-}0\leq\delta$ & $(1-t)\Phi(z_{1-t}) + t\Phi(z_t)$, & $C^{\text{HR}}_0=\Pi$, $C^{\text{HR}}_\infty=M$ \\
            & & & $z_t := \frac1\delta + \frac{\delta}2\log\of{\frac{t}{1-t}}$ & \\
            & Joe-EV & $\phantom{-}0\leq\alpha_1,\alpha_2\leq1$, & $1-\left\{\left[\alpha_1(1-t)\right]^{-\delta}+\left(\alpha_2 t\right)^{-\delta}\right\}^{-1 / \delta}$ & $C^{\text{JoeEV}}_{1, 1, \delta} = C^{\text{Gal}}_{\delta}$, \\
            & & $\phantom{-}0<\delta$ & & $C^{\text{JoeEV}}_{\alpha_1, 0, \delta} = C^{\text{JoeEV}}_{0, \alpha_2, \delta} = \Pi$ \\
            & & & & $C^{\text{JoeEV}}_{\alpha_1, \alpha_2, 0} = \Pi$ \\
            & & & & $C^{\text{JoeEV}}_{\alpha_1, \alpha_2, \infty} = C^{\text{MO}}_{\alpha_1, \alpha_2}$ \\
            & Marsh.-Ol. & $\phantom{-}0 \leq \alpha_1, \alpha_2 \leq 1$ & $\max{\setof{1-\alpha_1(1-t), 1- \alpha_2t}}$ & $C^{\text{MO}}_{0,0} = \Pi$, $C^{\text{MO}}_{1,1}=M$ \\
            & Tawn & $\phantom{-}0\leq\alpha_1,\alpha_2\leq1$, & $(1-\alpha_1)(1-t) + (1-\alpha_2)t $ & $C^{\text{Tawn}}_{\alpha_1, \alpha_2, 1} = C^{\text{Tawn}}_{0, 0, \theta} =\Pi$, \\
            & & $\phantom{-}1\leq\theta$ & $ + \inbrackets{\alpha_1(1-t)^{\theta} + (\alpha_2 t)^\theta}^{1/\theta}$ & $C^{\text{Tawn}}_{\alpha_1, \alpha_2, \infty} = C^{\text{MO}}_{\alpha_1, \alpha_2}$, \\
            & & & & $C^{\text{Tawn}}_{1, 1, \theta} = C^{\text{GH}}_{\theta}$ \\
            & t-EV & $-1<\rho<1,~0<\nu$ & $(1-t)T_{\nu + 1}(z_{1-t}) + t T_{\nu + 1}(z_t)$, & $C^{\text{tEV}}_{0, \rho} = C^{\text{MO}}_{\frac{\rho}{\sqrt{1-\rho^2}}, \frac{\rho}{\sqrt{1-\rho^2}}}$ \\
            & & & $z_t := \sqrt{\frac{1+\nu}{1-\rho^2}}\inbrackets{\of{\frac{t}{1-t}}^{1/\nu}-\rho}$ & $C^{\text{tEV}}_{\infty, \rho} = C^{\text{HR}}_{\rho}$ \\
            \midrule
            Ellip. & Gaussian & $-1 \leq \rho \leq 1$ & -- & $C^{\text{Gauss}}_{-1} = W, C^{\text{Gauss}}_{0} = \Pi$, \\
            & & & & $C^{\text{Gauss}}_{1}=M$ \\
            & Student-t & $-1 \leq \rho \leq 1,~0<\nu$ & -- & $C^{\text{t}}_{\nu, -1} = W$, $C^{\text{t}}_{\nu,1}=M$, \\
            & & & &  $C^{\text{t}}_{\infty,\rho} = C^{\text{Gauss}}_\rho$ \\
            & Laplace & $-1 \leq \rho \leq 1$ & -- & $C^{\text{Lap}}_{-1} = W$, $C^{\text{Lap}}_{1}=M$ \\
            \midrule
            Uncl. & Fréchet & $\phantom{-}0\leq\alpha,\beta,~\alpha+\beta \leq 1$ & -- & $C^{\text{Fré}}_{0, 1}=W$, $C^{\text{Fré}}_{0,0} = \Pi$, \\
            & & & & $C^{\text{Fré}}_{1, 0}=M$ \\
            & Mardia & $-1 \leq \theta \leq 1$ & -- & $C^{\text{Ma}}_{-1}=W$, $C^{\text{Ma}}_{0}=\Pi$, \\
            & & & & $C^{\text{Ma}}_{1}=M$ \\
            & FGM & $-1 \leq \theta \leq 1$ & -- & $C^{\text{FGM}}_0=\Pi$ \\
            & Plackett & $\phantom{-}0<\theta$ & -- & $C^{\text{Pl}}_0=W$, $C^{\text{Pl}}_1=\Pi$, \\
            & & &  & $C^{\text{Pl}}_{\infty}=M$ \\
            & Raftery & $\phantom{-}0 \leq \delta \leq 1$ & -- & $C^{\text{Ra}}_{0} = \Pi$, $C^{\text{Ra}}_{1} = M$ \\
            \bottomrule
        \end{tabular}
        }
    \end{center}
    \caption{
        Overview of elliptical, extreme-value and unclassified copula families, for which dependence properties are given in Table \ref{tab:non_arch_results}.
    }
    \label{tab:non_arch_overview}
\end{table}
 
\begin{figure}[tbp]
    \begin{center}
        \includegraphics[width=0.49\textwidth]{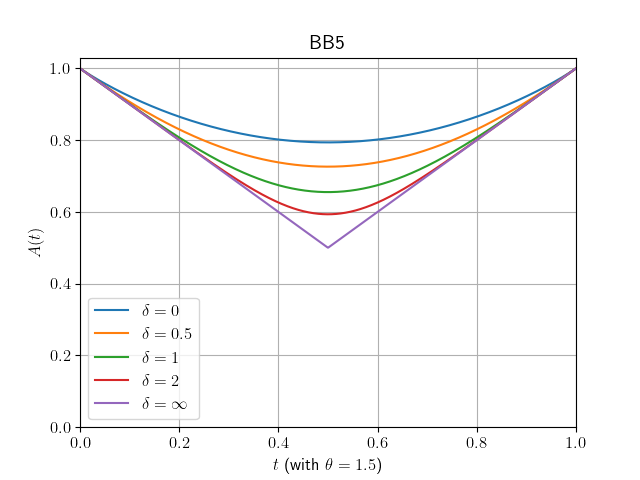}
        \includegraphics[width=0.49\textwidth]{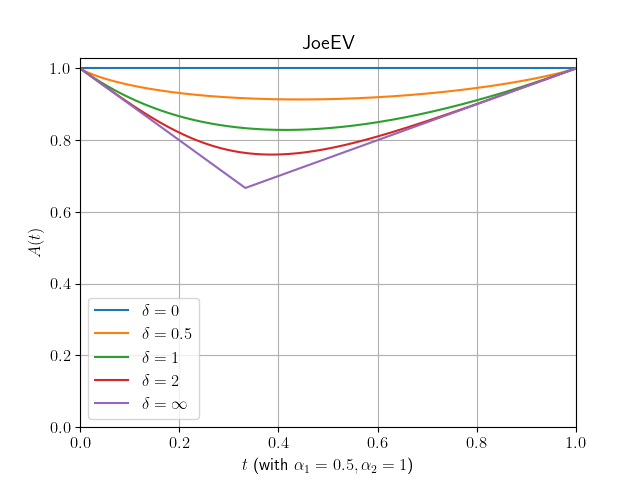}
        \caption{
            Pickands dependence functions for the two-parametric BB5 and for the three-parametric Joe's extreme-value copula family for some parameter choices.
            The Pickands functions are pointwise decreasing in their parameter.
            Hence, by Theorem \ref{thm:ordering_ev_copulas}, the BB5 and the Joe's extreme-value copula family are increasing with respect to the lower orthant order and the Schur order for copula derivatives. The skewed Pickands dependence function for Joe's extreme-value copula family reflects the fact that this copula family is not symmetric.
            For $\delta=0$ and $\delta=\infty$, the plots show the limiting cases from Table \ref{tab:non_arch_overview}.
            The BB5 copula converges uniformly to the Gumbel-Hougaard copula with parameter \(\theta\) as \(\delta \rightarrow 0\), and to the upper Fréchet copula $M$ as $\delta\rightarrow\infty$.
            Joe's extreme-value copula family converges uniformly to the independence copula $\Pi$ as $\delta\rightarrow 0$ and to the Marshall-Olkin copula with parameters $\alpha_1$ and $\alpha_2$ as $\delta\rightarrow\infty$.
        }
        \label{fig:joe_ev_pickand}
    \end{center}
\end{figure}

\begin{table}[tbp]
    \begin{center}
        \scalebox{0.8}{
        \begin{tabular}{rllllllll}
            \toprule 
            Type & Copula & CI/CD & $\TP$ & $\leq_{lo}$ & $\leq_{\partial S}$ & $\lambda_L$ & $\lambda_U$\\
            \midrule
            EV & BB5 & $\guparrow$ &\qmark & $\gnearrow$~in $\delta$ & $\gnearrow$~in $\delta$ &  $0$ & $2 - (2 - 2^{-1/\delta})^{1/\theta}$ \\
            & Cua.-Au. & $\guparrow$ & \xmark~iff $\delta > 0$ & $\gnearrow$ & $\gnearrow$ & $\1_{\setof{\delta = 1}}$ & $\delta$ \\
            & Galambos & $\guparrow$ &\qmark & $\gnearrow$ & $\gnearrow$ & $0$ & $2^{-1/\delta}$ \\
            & Gum.-Ho. & $\guparrow$ &\cmark~ & $\gnearrow$ & $\gnearrow$ & 0 & $2 - 2^{1/\theta}$ \\
            & Hüsl.-Re. & $\guparrow$ & $\text{\cmark}^*$ & $\gnearrow$ & $\gnearrow$ & $0$ & $2(1-\Phi(1/\delta))$ \\
            & Joe-EV & $\guparrow$ & $\text{\xmark}^*$ iff $C^{\text{JoeEV}}\neq\Pi$ & $\gnearrow$~in $\delta$ &$\gnearrow$~in $\delta$ & 0 & $(\alpha_1^{-\delta} + \alpha_2^{-\delta})^{-1/\delta}$ \\
            & Mar.-Ol. & $\guparrow$ &\xmark ~iff $\alpha_1\wedge \alpha_2 > 0$ & $\gnearrow$~in & $\gnearrow$~in & $\1_{\setof{\alpha_1=\alpha_2=1}}$ & $\min\setof{\alpha_1, \alpha_2}$ \\
            & & &$\text{\xmark}^*$~iff $\alpha_1\vee \alpha_2 > 0$ & $\phantom{\cmark}\alpha_1 = \alpha_2$ & $\phantom{\cmark}\alpha_1 = \alpha_2$ & & \\
            & Tawn & $\guparrow$ &$\text{\xmark}^*$ iff $C^{\text{Tawn}}\neq\Pi$ & $\gnearrow$~in $\theta$ & $\gnearrow$~in $\theta$ & 0 & $\left(\alpha_1 + \alpha_2\right.$ \\
            & & & & & & & $\left.- (\alpha_1^\theta + \alpha_2^{\theta})^{\frac1{\theta}}\right)$ \\
            & t-EV & $\guparrow$ & $\text{\xmark}^*$& $\gnearrow$~in $\rho$ & $\gnearrow$~in $\rho$ & $\1_{\setof{\rho = 1}}$ & $2(1-T_{\nu+1}(z_{1/2}))$ & \\
            \midrule 
            Ell. & Gauss & $\guparrow$~iff $\rho \geq 0$, & \cmark~iff $\rho \geq 0$ & $\gnearrow$ & $\gnearrow$~if $\rho \geq 0$, & $\1_{\setof{\abs{\rho} = 1}}$ & $\1_{\setof{\rho = 1 \vee \rho = -1}}$ \\
            & & $\gdownarrow$ iff $\rho \leq 0$ & & & $\gsearrow$~if $\rho \leq 0$ & & \\
            & Student-t & \xmark & \xmark & $\gnearrow$~in $\rho$ & \qmark & $2-2 t_{\nu+1}\of{c_{\rho,\nu}}$ & $2-2 t_{\nu+1}\of{c_{\rho,\nu}}$ \\
            & Laplace & \xmark~if $\rho \leq 0$ & \xmark & $\gnearrow$ & \qmark & \qmark & \qmark \\
            \bottomrule
            Uncl. & Fréchet & $\guparrow$~iff $\beta = 0$& \xmark~iff & $\gnearrow$ ~in & $\gnearrow$~for  & $\alpha$ & $\alpha$ \\
            & & $\gdownarrow$~iff $\alpha = 0$ & $(\alpha,\beta)\neq (1,0)$ & \phantom{\cmark}$\alpha$ and $\beta$ & \phantom{\cmark}$\alpha\wedge\beta=0$  & & \\
            & Mardia & $\guparrow$~iff $\theta = 1$ & \xmark~iff~$\theta <1$ & \xmark & $\gnearrow^*$~iff $\theta\geq 0$, & $\frac{\theta^{2} \left(\theta + 1\right)}{2}$ & $\frac{\theta^{2} \left(\theta + 1\right)}{2}$ \\
            & & $\gdownarrow$~iff $\theta = -1$ & & &$\gsearrow^*$~iff $\theta\leq 0$ &  & \\
            & FGM & $\guparrow$~iff $\theta \geq 0$, & \cmark~iff $\theta \geq 0$ & $\gnearrow$ & $\gnearrow$~if $\theta \geq 0$, & 0 & 0 \\
            & & $\gdownarrow$ iff $\theta \leq 0$ & & & $\gsearrow$~if $\theta \leq 0$ & & \\
            & Plackett &  $\guparrow$~iff $\theta \geq 1$, & \xmark~if $\theta > 2$, & $\gnearrow$ & $\gnearrow$~if $\theta \geq 1$, & 0 & 0 \\
            & & $\gdownarrow$ iff $\theta \leq 1$ &$\text{\cmark}^*$~if $\theta\in[1,2]$ & & $\gsearrow$~if $\theta \leq 0$ & & \\
            & Raftery & $\guparrow$ & \xmark~iff $\delta > 0$ & $\gnearrow$ & $\gnearrow$ & $2\frac{\delta}{\delta + 1}$ & 0 \\
        \end{tabular}
        }
    \end{center}
    \caption{
        Copula family properties for elliptical, extreme-value and unclassified copulas, where $c_{\rho,\nu}:= \sqrt{\frac{(\nu+1)(1-\rho)}{1+\rho}}$ for the tail-dependence coefficients of the Student-t copula family.
        Results marked with * are obtained from numerical checks and neither referenced nor proved.
    }
    \label{tab:non_arch_results}
\end{table}

The following theorem shows on the one hand the equivalence of the lower orthant order for bivariate extreme-value copulas and the reverse pointwise order of the associated Pickands dependence functions.
On the other hand, since bivariate extreme-value copulas are always CI, see \cite[Th\'{e}or\`{e}me 1]{Guillem-2000}, we also obtain the equivalence of the reverse pointwise order for the Pickands dependence functions with the Schur order for conditional distributions and the Schur order for copula derivatives.

\begin{theorem}[Ordering extreme-value copulas]\label{thm:ordering_ev_copulas}~\\
    Let \(D_1,D_2\in \cC_2\) be extreme-value copulas with Pickands dependence function \(A_1\) and \(A_2\,,\) respectively.
    Let \((U,V)\) and \((U',V')\) be bivariate random vectors with \(F_{U,V}=D_1\) and \(F_{U',V'}=D_2\,.\)
    Then, the following statements are equivalent:
    \begin{enumerate}[(i)]
        \item \(A_1(t)\geq A_2(t)\) for all \(t\in (0,1)\,,\)
        \item \(D_1\leq_{lo} D_2\,,\)
        \item \(D_1\leq_{\partial S} D_2\,,\)
        \item \((V|U)\leq_S (V'|U')\,,\)
        \item \((U|V)\leq_S (U'|V')\,.\)
    \end{enumerate}
\end{theorem}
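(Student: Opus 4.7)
The plan is to use (ii) as a pivot and run all equivalences through it. The crucial external input is that every bivariate extreme-value copula is CI, hence CIS in both coordinates, by \cite[Théorème 1]{Guillem-2000}. Once this is in hand, Lemma \ref{lem:3.16}(ii) converts lower orthant comparisons into Schur comparisons of copula derivatives (in each coordinate), and Lemma \ref{lemcharschur} converts those into Schur orderings of conditional distributions.

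I would first establish (i) $\Leftrightarrow$ (ii). The direction (i) $\Rightarrow$ (ii) is immediate from the representation \eqref{defEVC}: since $\ln(uv) \leq 0$ on $(0,1)^2$, pointwise domination $A_1(t) \geq A_2(t)$ gives $\ln(uv) A_1(\cdot) \leq \ln(uv) A_2(\cdot)$, whence $C_{A_1}(u,v) \leq C_{A_2}(u,v)$ on $(0,1)^2$; the inequality extends to $[0,1]^2$ by continuity and the boundary conditions on a copula. For the converse, substitute $u = e^{-(1-t)}$, $v = e^{-t}$ with $t \in (0,1)$ into \eqref{defEVC}; then $\ln v/(\ln u + \ln v) = t$ and $\ln(uv) = -1$, so $C_A(e^{-(1-t)}, e^{-t}) = e^{-A(t)}$. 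Hence $D_1 \leq_{lo} D_2$ forces $A_1(t) \geq A_2(t)$ for every $t \in (0,1)$.

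Next I would establish (ii) $\Leftrightarrow$ (iii). Since $D_1$ and $D_2$ are extreme-value copulas they are CI, and in particular CIS in each coordinate. Applying Lemma \ref{lem:3.16}(ii) separately with respect to the first and the second component yields the equivalences $D_1 \leq_{lo} D_2 \Leftrightarrow D_1 \leq_{\partial_1 S} D_2$ and $D_1 \leq_{lo} D_2 \Leftrightarrow D_1 \leq_{\partial_2 S} D_2$ (the second follows from the first applied to the copulas with arguments swapped, which are again CIS by the CI property). Combining, (ii) is equivalent to $D_1 \leq_{\partial S} D_2$, that is, to (iii). Finally, Lemma \ref{lemcharschur} identifies $D_1 \leq_{\partial_1 S} D_2$ with $(V|U) \leq_S (V'|U')$ and $D_1 \leq_{\partial_2 S} D_2$ with $(U|V) \leq_S (U'|V')$, so (iv) and (v) each become equivalent to (ii) as well. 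This closes the cycle.

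The only genuinely nontrivial step is the backward direction of (i) $\Leftrightarrow$ (ii), which hinges on the invertibility of the map $A \mapsto C_A$ along the one-parameter curve $(u,v) = (e^{-(1-t)}, e^{-t})$; all remaining steps amount to a clean application of the already established Lemmas \ref{lemcharschur} and \ref{lem:3.16}(ii), together with the CI property of bivariate extreme-value copulas.
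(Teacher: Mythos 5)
Your proof is correct and follows essentially the same route as the paper's: both pivot all equivalences through the lower orthant order, using the representation \eqref{defEVC} for (i)$\Leftrightarrow$(ii) and the CI property of extreme-value copulas together with Lemma \ref{lem:3.16} and Lemma \ref{lemcharschur} for the rest. The only difference is cosmetic: for (ii)$\Rightarrow$(i) you substitute $(u,v)=(e^{-(1-t)},e^{-t})$ so that $C_A(u,v)=e^{-A(t)}$, whereas the paper takes $(u,v)=(t^{1/t-1},t)$; both choices make the Pickands argument equal to $t$ and recover $A(t)$ from the pointwise inequality.
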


\begin{proof}
    "$(i)\Rightarrow(ii)$": Assume that $A_1(t) \geq A_2(t)$ for all \(t\in (0,1)\,.\)
    Then, for $u,v \in (0,1)$,
    it is $\ln(v)/\ln(uv)\in(0,1)$ and thus $A_1(\ln(v)/\ln(uv)) \geq A_2(\ln(v)/\ln(uv))$.
    Since $0\leq uv \leq 1$, it follows from \eqref{defEVC} that 
    \[
        D_1(u,v)
        = (uv)^{A_1\of{\ln(v)/\ln(uv)}}
        \leq (uv)^{A_2\of{\ln(v)/\ln(uv)}}
        = D_2(u,v)
    ,\]
    which shows $D_1 \leq_{lo} D_2$. \\
    "$(ii)\Rightarrow(i)$": Assume that $D_1 \leq_{lo} D_2$ and let $v:=t$ and $u:=t^{1/t-1}$ for $t\in(0, 1)$.
    This choice satisfies $u,v\in(0, 1)$ and $\ln(v)/\ln(uv)=t\,.$ It follows from \eqref{defEVC} that
    \[
        (uv)^{A_1\of{t}}
        = C_1(u,v)
        \leq C_2(u,v)
        = (uv)^{A_2\of{t}}
    ,\]
    and hence $A_1(t) \geq A_2(t)$. \\
    "$(ii)\Rightarrow(iii)$": Since bivariate extreme-value copulas are always CI, see \cite{Guillem-2000}, the statement follows from Lemma \ref{lem:3.16} (ii). \\
    "$(iii)\Rightarrow(iv), (v)$" is a consequence of the definition of $\leq_{\partial S}$ and Lemma \ref{lemcharschur}. \\
    "$(iv)\Rightarrow(ii)$" follows from first applying Lemma \ref{lemcharschur} and then Lemma \ref{lem:3.16} (i).
    \\
    "$(v)\Rightarrow(ii)$": Denote by $D^T_i(u, v) := D_i(v, u)\,,$ $u,v \in[0,1]^2\,,$ $i\in \{1,2\}\,,$ the transposed copula of \(D\,.\)
    Then, again from Lemma \ref{lemcharschur} and Lemma \ref{lem:3.16} (i), we get $D^T_1\leq_{lo} D^T_2$, which is equivalent to $D_1\leq_{lo} D_2$.
\end{proof}

\begin{remark}
For various well-known families of extreme-value copulas, it can easily be verified that the associated Pickands dependence functions are pointwise ordered.
Hence, Theorem \ref{thm:ordering_ev_copulas} provides a simple characterization for ordering extreme-value copulas with respect to the lower orthant order and the Schur orders, see Table \ref{tab:non_arch_results} and Figure \ref{fig:joe_ev_pickand}.
In particular, if the Pickands dependence functions are ordered, then Kendall's tau, Spearman's rho, the tail-dependence coefficients and Chatterjee's xi are reverse ordered, see Section \ref{secmoca}.
We refer to \cite{Cap-1997,Cap-2000} for a dependence ordering that is based on a probability transform and that is, for extreme-value copulas, strictly weaker than \(\leq_{lo}\) and equivalent to the ordering of Kendall's tau.
\end{remark}

\subsubsection{Elliptical copulas}\label{secpropellcop}

For the multivariate normal distribution, positive dependence concepts are characterized in terms of the correlation matrix, see \cite{Rueschendorf-1981b}.
More generally, for elliptical distributions, positive dependence properties also depend on the elliptical generator.
In the case of bivariate elliptical distributions the \(\TP\)-property of an elliptical copula \(C_\rho\,,\) \(\rho\in (-1,1)\,,\) with density generator $g$ is fulfilled if and only if

\begin{align}
    \begin{aligned}\label{eqcharell}
    -\frac{\rho}{1+\rho} \leq \inf _{t \in T} \frac{t \phi^{\prime \prime}(t)}{\phi^{\prime}(t)} \leq \sup _{t \in T} \frac{t \phi^{\prime \prime}(t)}{\phi^{\prime}(t)} \leq \frac{\rho}{1-\rho} \quad &\text{for } \phi'(t) \neq 0\,,\\
    \phi''(t)=0 \quad &\text{for } \phi'(t)=0\,,
    \end{aligned}
\end{align}
see \cite[Proposition 1.2]{Abdous-2005},
where $t\mapsto \phi(t):=\log(g(t))$ is assumed to be twice differentiable and where $T=\left\{t \in \R_+: \phi^{\prime}(t)<0\right\}$.
In particular, if an elliptically contoured distribution is \(\TP\) for $\rho=0$, then it is Gaussian.
Due to \eqref{implposdepcon}, the criterion in \eqref{eqcharell} is sufficient for an elliptical distribution being CI.
However, we are not aware of a necessary condition for CI based on the elliptical generator.
We obtain the following ordering result for elliptical families.

\begin{proposition}[Ordering elliptical copulas]\label{prop:ordering_elliptical_copulas}~\\
    Let \((C_\rho)_{\rho\in [-1,1]}\) be a family of elliptical copulas with density generator $g$.
    Then, the following statements hold true.
    \begin{enumerate}[(i)]
        \item \label{ord_ell_cop_1} \(\rho\leq \rho'\) if and only if \(C_\rho\leq_{lo} C_{\rho'}\,.\)
        \item If \(g\) satisfies \eqref{eqcharell} for \(|\rho|\in [0,1)\,,\) then \(|\rho| \leq |\rho'|\) implies \(C_{\rho}\leq_{\partial S} C_{\rho'}\,.\)
    \end{enumerate}
\end{proposition}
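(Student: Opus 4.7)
The plan is to prove (i) first and then reduce (ii) to (i) by combining a reflection identity specific to elliptical laws with Lemma~\ref{lem:3.16}.

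For (i), the forward direction is a Slepian-type monotonicity: for fixed density generator \(g\), the bivariate elliptical distribution function \(F_\rho(x_1,x_2)\) is nondecreasing in \(\rho\in[-1,1]\) at every \((x_1,x_2)\). I would establish this by differentiating in \(\rho\) (using the density when \(g\) is smooth, or otherwise the characteristic function or the stochastic representation \(X=RAU\) from Section~\ref{subsubsec:elliptical}). Since the univariate margins do not depend on \(\rho\), Sklar's theorem \eqref{eqsklar} transfers this monotonicity to \(C_\rho(u,v)\leq C_{\rho'}(u,v)\) whenever \(\rho\leq\rho'\). For the reverse direction I would use Lemma~\ref{lemmoc}: within any elliptical family with fixed generator, Kendall's tau is a strictly increasing function of \(\rho\) (for many standard generators even in the closed form \(\tau=(2/\pi)\arcsin\rho\)). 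Hence \(C_\rho\leq_{lo}C_{\rho'}\) forces \(\tau(C_\rho)\leq\tau(C_{\rho'})\) and, by strict monotonicity, \(\rho\leq\rho'\).

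The key observation for (ii) is a reflection identity. Since an elliptical distribution is centrally symmetric, the vector \((-X_1,X_2)\) is elliptical with correlation \(-\rho\) and with the same univariate margins as \((X_1,X_2)\). Reading this at the copula level yields
\[
C_{-\rho}(u,v)\;=\;v-C_\rho(1-u,v),\qquad(u,v)\in[0,1]^2,
\]
and hence \(\partial_1 C_{-\rho}(u,v)=\partial_1 C_\rho(1-u,v)\). Since the map \(u\mapsto 1-u\) preserves Lebesgue measure on \((0,1)\), both functions share the same decreasing rearrangement. From the definition \eqref{defschurfun} we conclude \(C_\rho=_{\partial_1 S}C_{-\rho}\); the symmetric computation for the second argument gives \(C_\rho=_{\partial_2 S}C_{-\rho}\).

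It therefore suffices to show \(C_\rho\leq_{\partial S}C_{\rho'}\) under the additional assumption \(0\leq\rho\leq\rho'<1\). Condition \eqref{eqcharell} makes \(C_\rho\) and \(C_{\rho'}\) both \(\TP\), hence CIS by the implication chain \eqref{implposdepcon}. Part~(i) then provides \(C_\rho\leq_{lo}C_{\rho'}\), and Lemma~\ref{lem:3.16}\,(ii) upgrades this to \(C_\rho\leq_{\partial_1 S}C_{\rho'}\). The coordinate-exchange symmetry of elliptical copulas gives the analogous inequality for \(\partial_2\). Chaining with the reflection step,
\[
C_\rho\;=_{\partial_1 S}\;C_{|\rho|}\;\leq_{\partial_1 S}\;C_{|\rho'|}\;=_{\partial_1 S}\;C_{\rho'},
\]
and the same chain for \(\partial_2\) yields \(C_\rho\leq_{\partial S}C_{\rho'}\).

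The main obstacle is the elliptical Slepian-type monotonicity underlying (i): while classical for the Gaussian case, its extension to general density generators requires care, particularly for generators producing singular or heavy-tailed radial parts. Once this input is in place, the remainder of the argument is structural, relying only on the reflection identity available for elliptical laws and on the bridge provided by Lemma~\ref{lem:3.16} between the lower orthant order and the Schur order for copula derivatives.
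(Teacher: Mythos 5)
Your proposal is correct in substance but structures the argument differently from the paper, and in one respect more cleanly. For (i), the paper simply cites \cite[Theorem 2.21]{Joe-1997}, whereas you sketch the Slepian-type monotonicity directly and supply the converse via strict monotonicity of Kendall's tau (\(\tau=\tfrac{2}{\pi}\arcsin\rho\) holds for all elliptical copulas with a density generator, so your hedge ``for many standard generators'' can be removed); this is a legitimate, if partly deferred, route. For (ii), the paper argues that \eqref{eqcharell} forces both copulas to be \(\TP\), hence CI, invokes the symmetry ``\(C_\rho\) is CIS iff \(C_{-\rho}\) is CDS'' to conclude that \(C_\rho\) and \(C_{\rho'}\) are both CI or both CD, and then applies Lemma \ref{lem:3.16}\,(ii) or Lemma \ref{lem:3.16_cds}\,(ii) together with (i). Your reflection identity \(C_{-\rho}(u,v)=v-C_\rho(1-u,v)\), giving \(C_\rho=_{\partial_1 S}C_{-\rho}\) because \(u\mapsto 1-u\) preserves Lebesgue measure, reduces everything to the nonnegative-correlation case and then needs only Lemma \ref{lem:3.16}\,(ii); this handles the mixed-sign case (\(\rho>0>\rho'\), say) uniformly, where the paper's ``both CI or both CD'' dichotomy does not literally apply, so your route is arguably tighter. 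The one genuine omission is the boundary case \(|\rho'|=1\): your reduction silently assumes \(\rho'<1\), but \eqref{eqcharell} is only hypothesized for \(|\rho|\in[0,1)\), so the \(\TP\)/CIS argument is unavailable there. The paper closes this by observing that \(C_{\rho'}\in\{M,W\}\) when \(|\rho'|=1\), and these are maximal elements with respect to \(\leq_{\partial S}\), so the conclusion is immediate; you should add this one line.
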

\begin{proof}
    The first statement is an immediate consequence of \cite[Theorem 2.21]{Joe-1997}.
    For the second statement, consider first the case where $ |\rho'|<1$ and note that if $C_{|\rho|}$ satisfies \eqref{eqcharell}, then $C_{|\rho'|}$ also satisfies \eqref{eqcharell}, as the bounds are less restrictive for $|\rho'|$.
    Since \eqref{eqcharell} characterizes $\TP$, it follows that $C_{|\rho|}$ and $C_{|\rho'|}$ are CI.
    Noticing that an elliptical copula is symmetric in the sense that $C_{\rho}$ is CIS if and only if $C_{-\rho}$ is CDS, it follows that $C_{\rho}$ and $C_{\rho'}$ are either both CI or both CD. 
    Hence, together with (i), it follows \(C_{\rho}\leq_{\partial S} C_{\rho'}\) from Lemma \ref{lem:3.16} (ii) or Lemma \ref{lem:3.16_cds} (ii), respectively.
    In the case where $\abs{\rho'}=1$, we have $C_{\rho'}\in\setof{M, W}$, which are extremal elements in the Schur order for copula derivatives, so $C_{\rho} \leq_{\partial S} C_{\rho'}$ also in this case.
\end{proof}

\subsection{Monotonicity properties of measures of association}\label{secmoa}

\begin{figure}[tbp]
    \centering
    \includegraphics[width=0.29\textwidth]{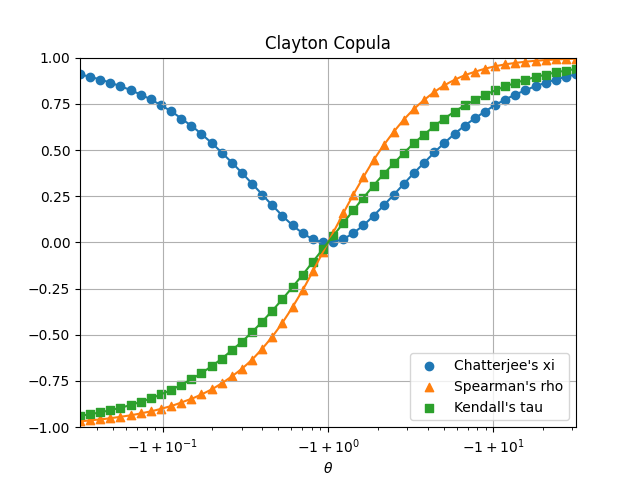}
    \includegraphics[width=0.29\textwidth]{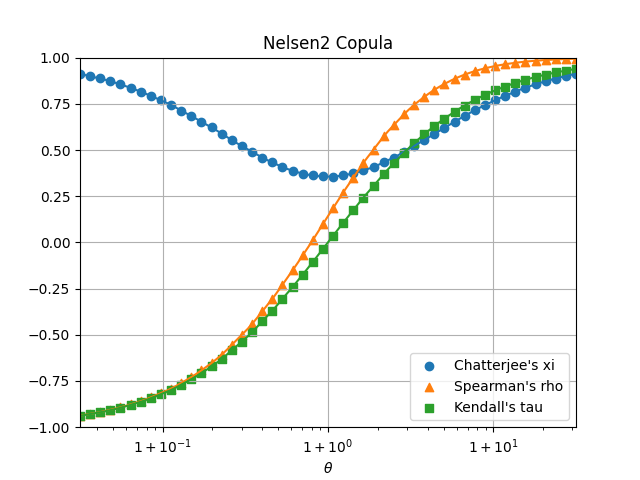}
    \includegraphics[width=0.29\textwidth]{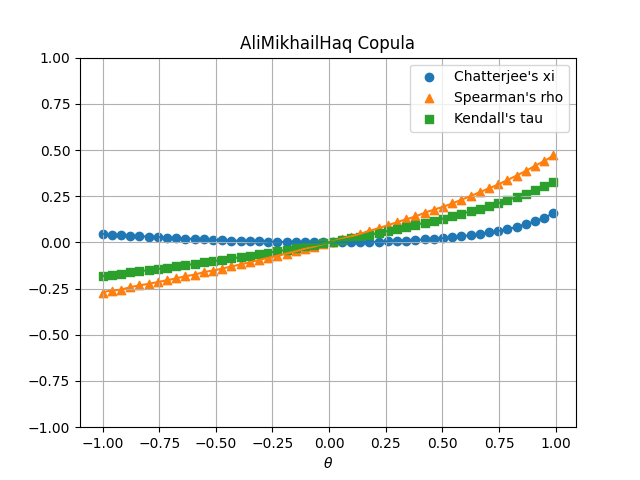} \allowdisplaybreaks\\[-0.6em]
    \includegraphics[width=0.29\textwidth]{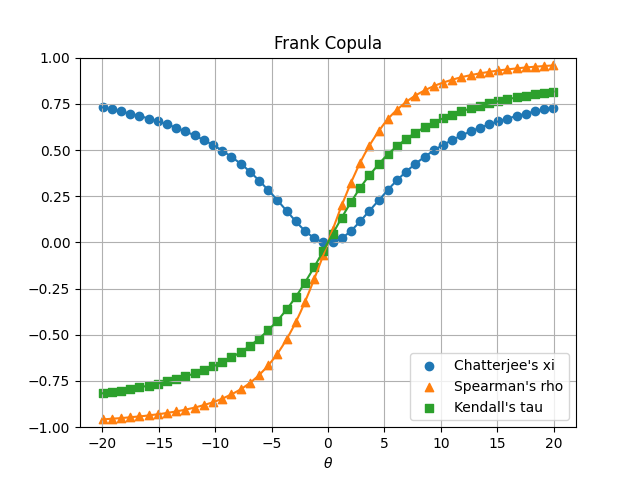}
    \includegraphics[width=0.29\textwidth]{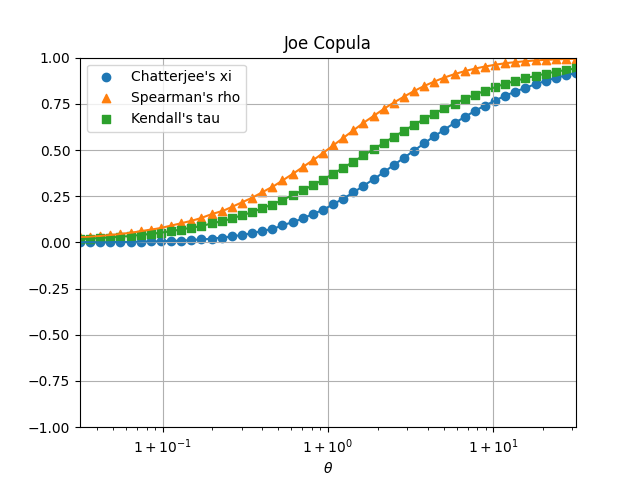} 
    \includegraphics[width=0.29\textwidth]{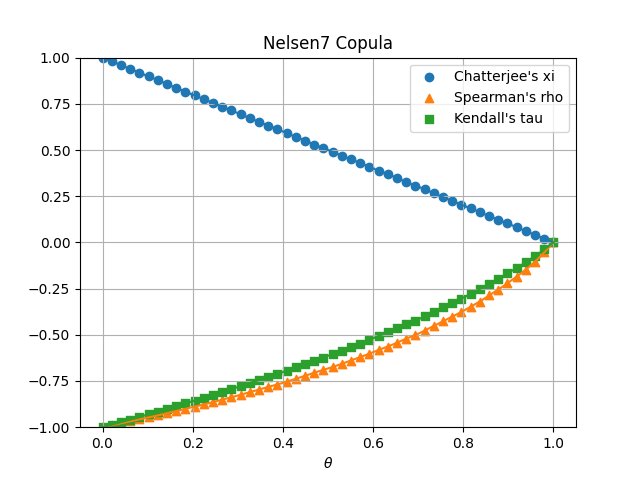}\allowdisplaybreaks\\[-0.6em]
    \includegraphics[width=0.29\textwidth]{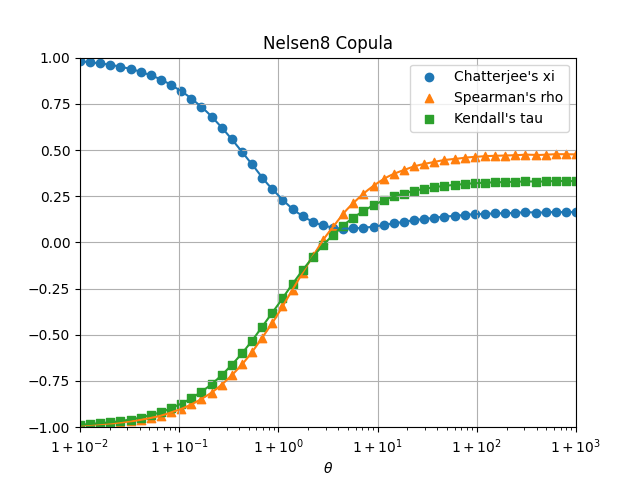} 
    \includegraphics[width=0.29\textwidth]{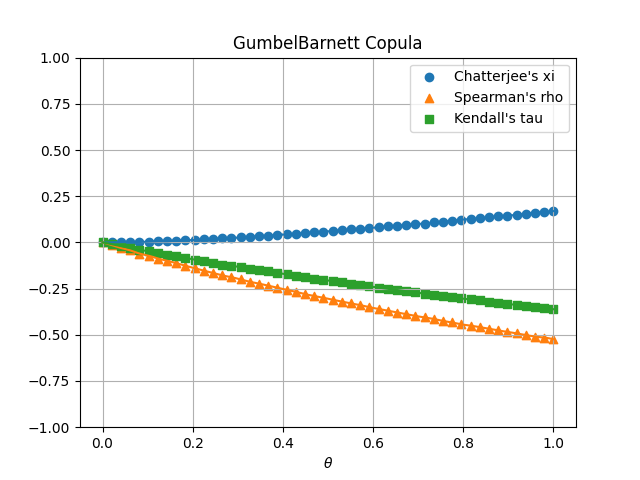} 
    \includegraphics[width=0.29\textwidth]{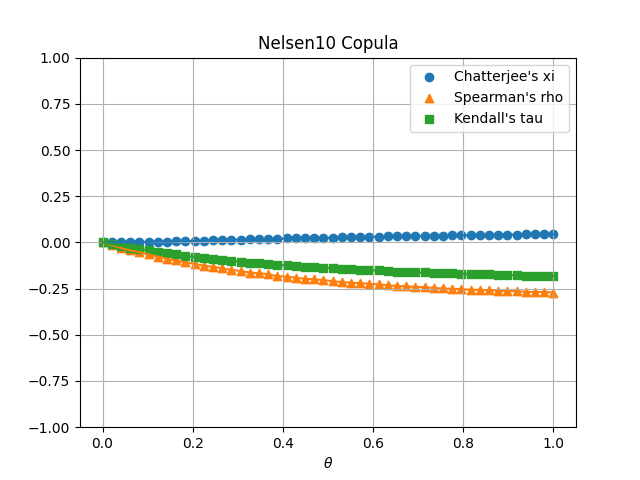}\allowdisplaybreaks\\[-0.6em]
    \includegraphics[width=0.29\textwidth]{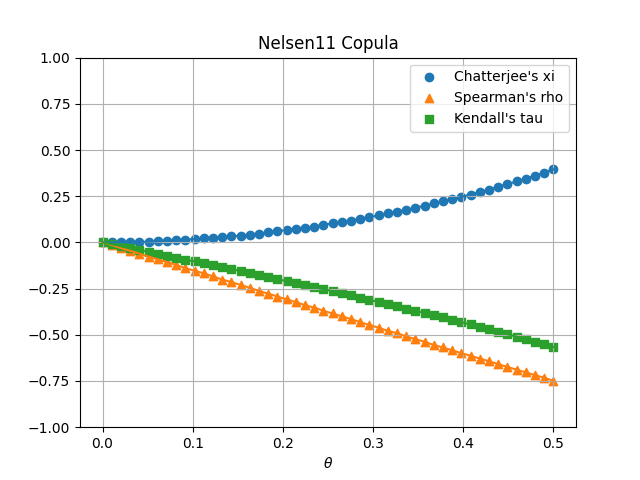} 
    \includegraphics[width=0.29\textwidth]{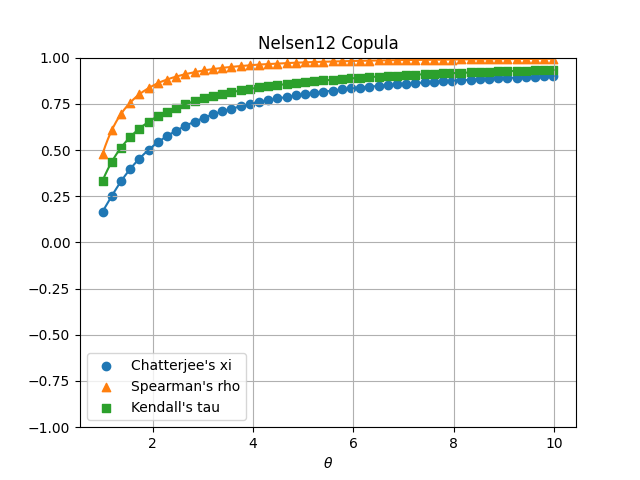}
    \includegraphics[width=0.29\textwidth]{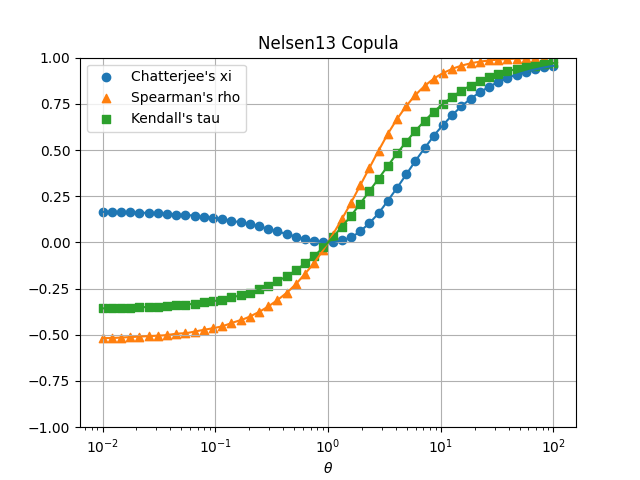}\allowdisplaybreaks\\[-0.6em]
    \includegraphics[width=0.29\textwidth]{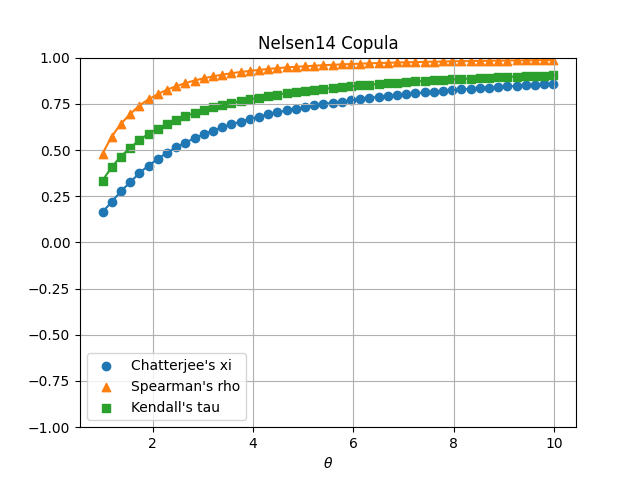}
    \includegraphics[width=0.29\textwidth]{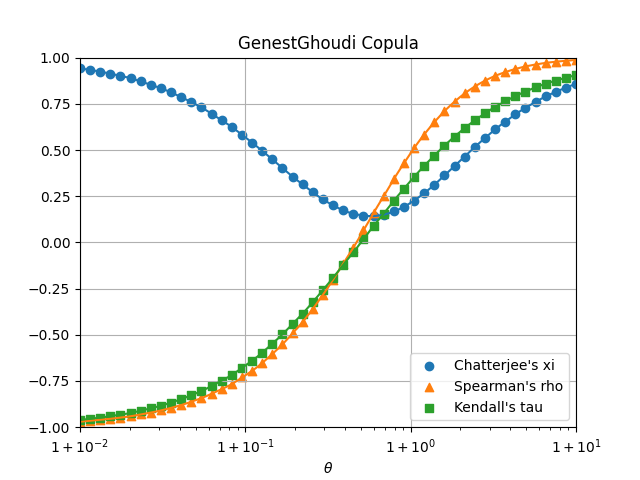} 
    \includegraphics[width=0.29\textwidth]{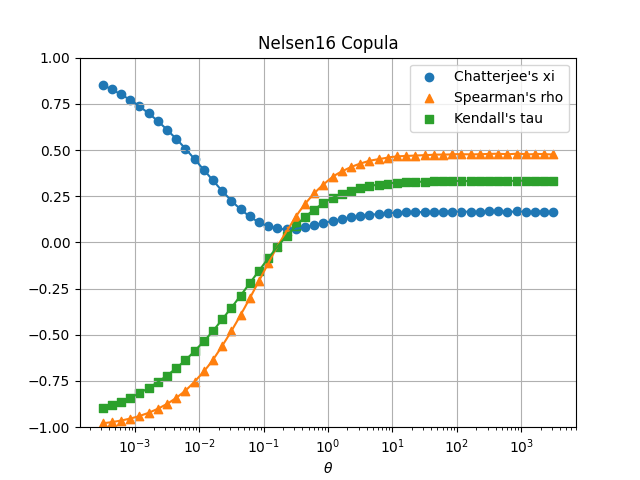}\allowdisplaybreaks\\[-0.6em]
    \includegraphics[width=0.29\textwidth]{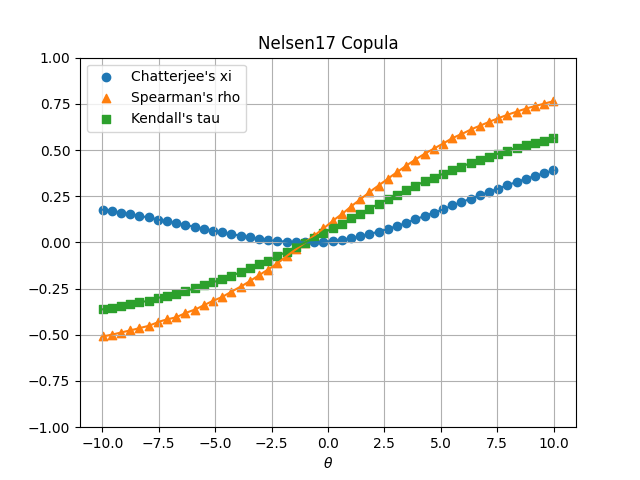}
    \includegraphics[width=0.29\textwidth]{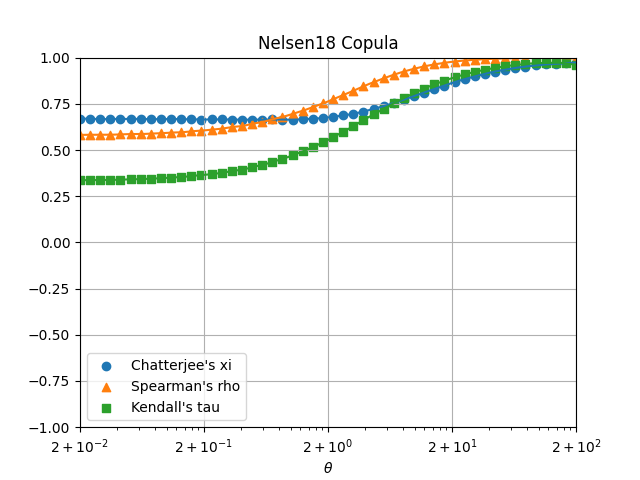} 
    \includegraphics[width=0.29\textwidth]{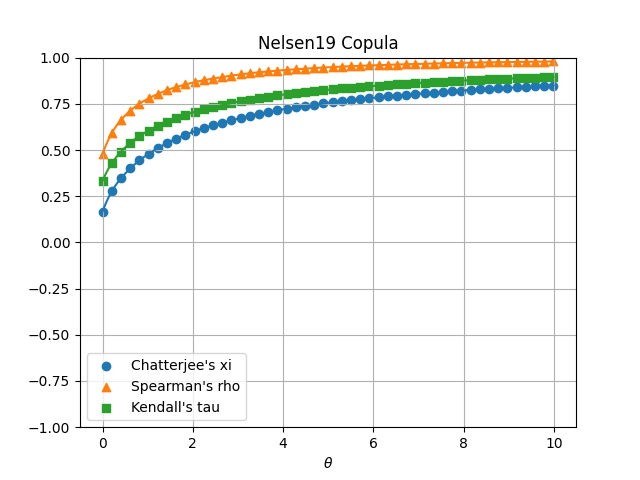}\allowdisplaybreaks\\[-0.6em]
    \includegraphics[width=0.29\textwidth]{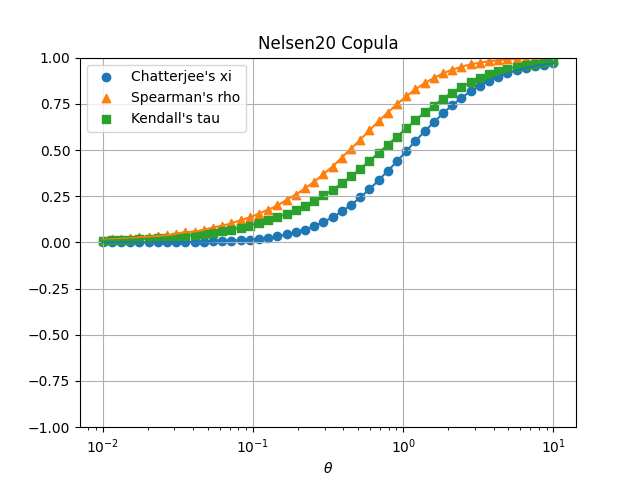}
    \includegraphics[width=0.29\textwidth]{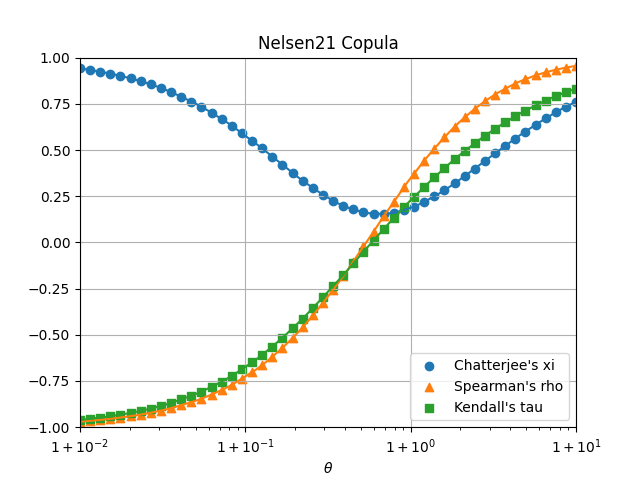} 
    \includegraphics[width=0.29\textwidth]{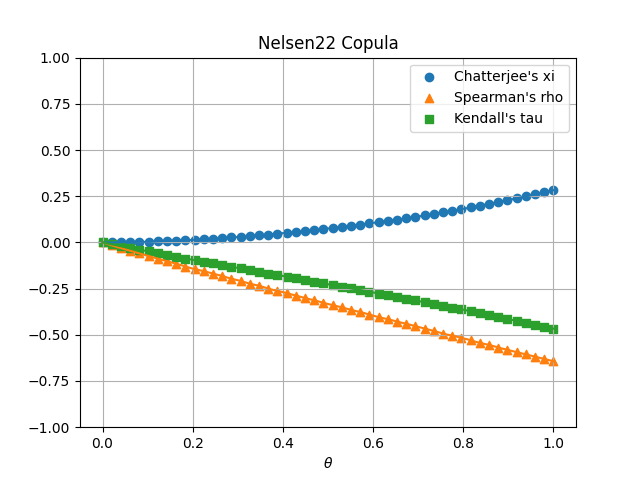}
    \caption{
        Chatterjee's xi, Spearman's rho and Kendall's tau in dependence of the parameter for the  Archimedean copula families in Table \ref{tab:copulas}.
        The graph for the Gumbel-Hougaard copula family is given in Figure \ref{fig:ev_chatterjee_rho}.
    }
    \label{fig:arch_chatterjee_rho_tau}
\end{figure}

\begin{figure}[tbp]
    \begin{centering}
        \includegraphics[width=0.29\textwidth]{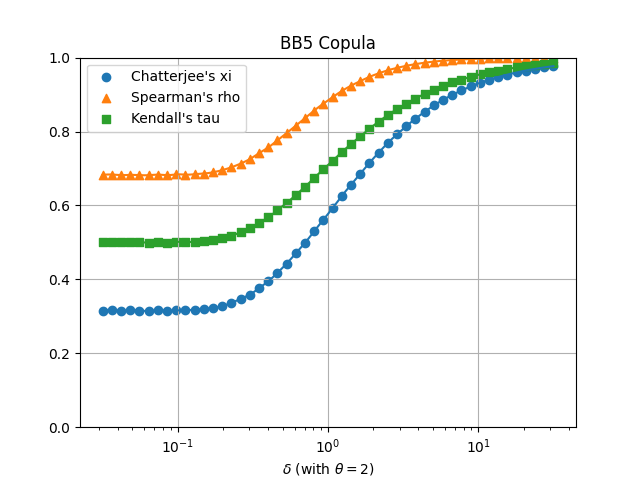}
        \includegraphics[width=0.29\textwidth]{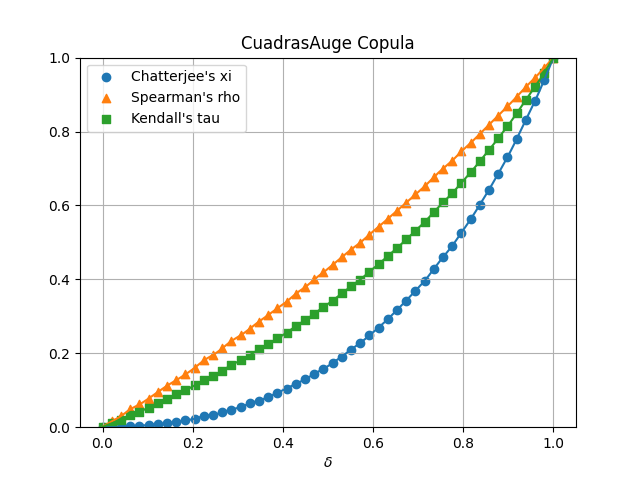}
        \includegraphics[width=0.29\textwidth]{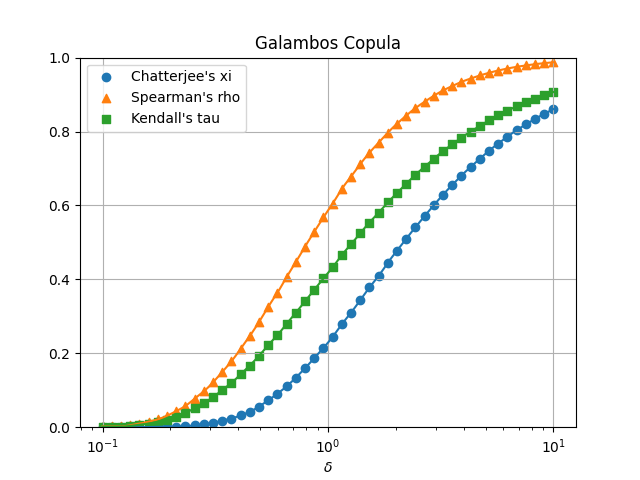} \\[-0.2em]
        \includegraphics[width=0.29\textwidth]{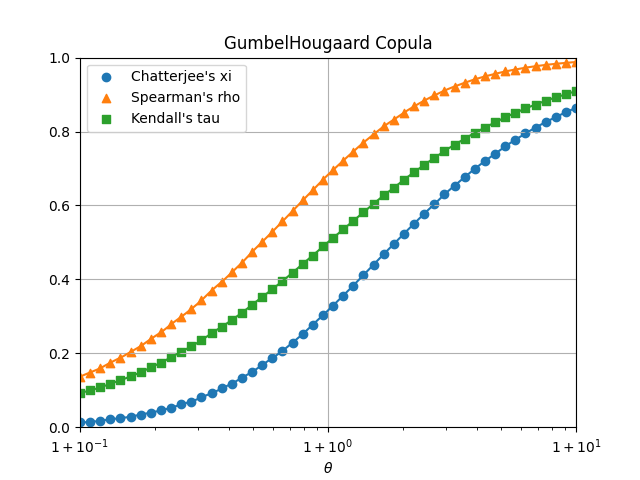}
        \includegraphics[width=0.29\textwidth]{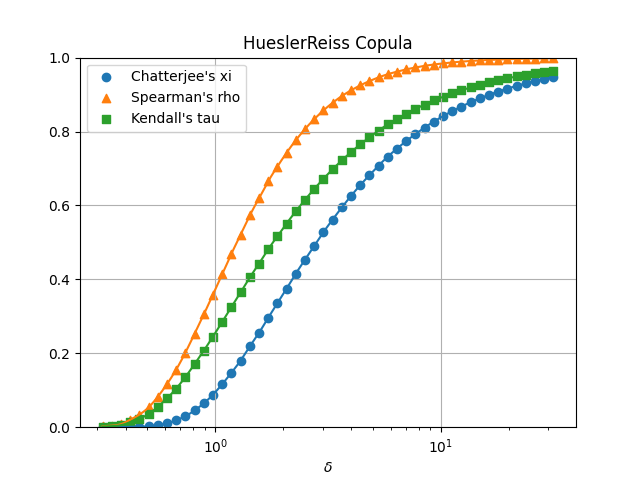}
        \includegraphics[width=0.29\textwidth]{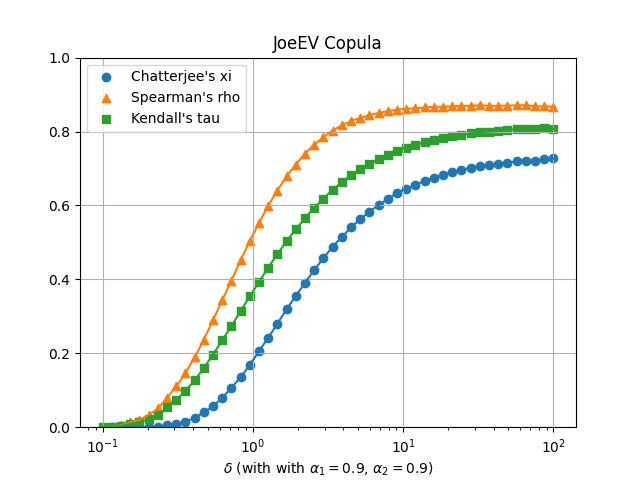}\\[-0.2em]
        \includegraphics[width=0.29\textwidth]{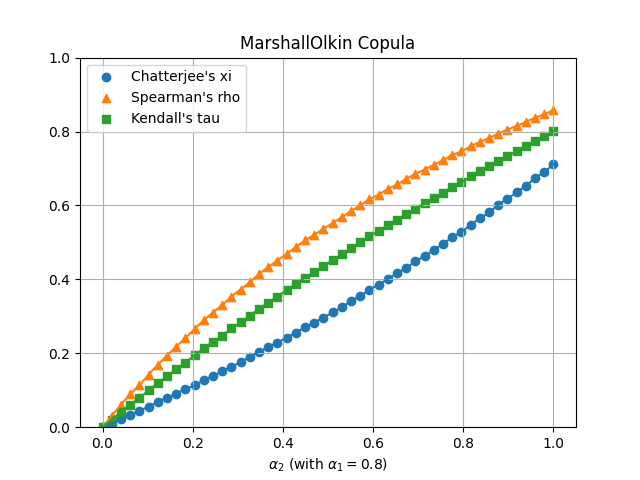}
        \includegraphics[width=0.29\textwidth]{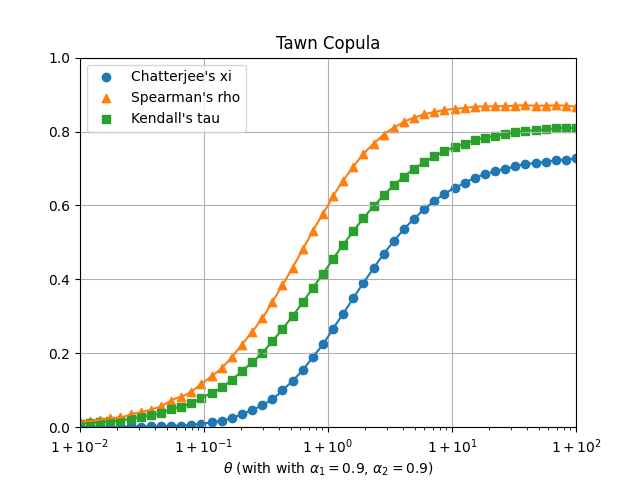} 
        \includegraphics[width=0.29\textwidth]{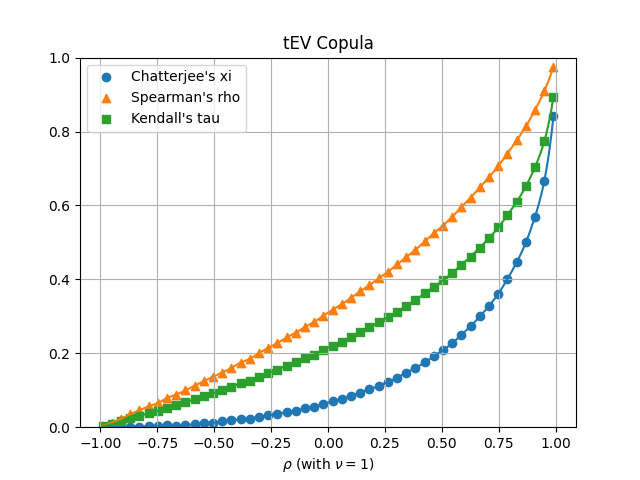} 
        \centering
        \caption{
            Chatterjee's xi, Spearman's rho and Kendall's tau for the extreme-value copula families in Table \ref{tab:copulas}.
            We consider special cases for multi-parameter families as stated in the $x$-axis labels.
        }
        \label{fig:ev_chatterjee_rho}
        ~\\[1em]

        \includegraphics[width=0.29\textwidth]{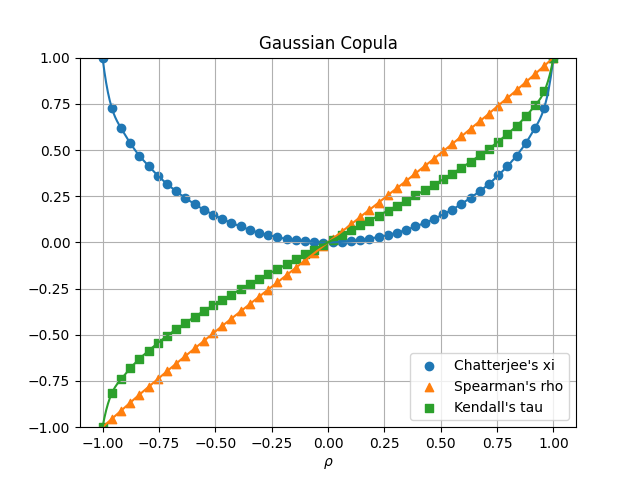}
        \includegraphics[width=0.29\textwidth]{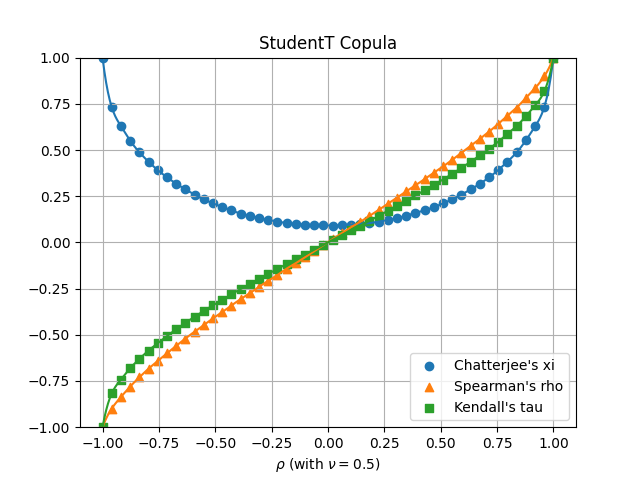}
        \includegraphics[width=0.29\textwidth]{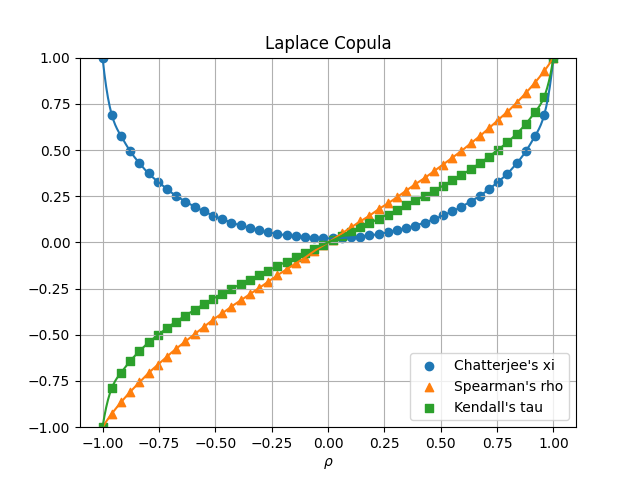}
        \centering
        \caption{
            Chatterjee's xi, Spearman's rho and Kendall's tau for the elliptical copula families in Table \ref{tab:copulas}.
            We consider $\nu=\frac12$ for the Student-t copula family.
        }
        \label{fig:elliptical_chatterjee_rho}
        ~\\[1em]
        \includegraphics[width=0.29\textwidth]{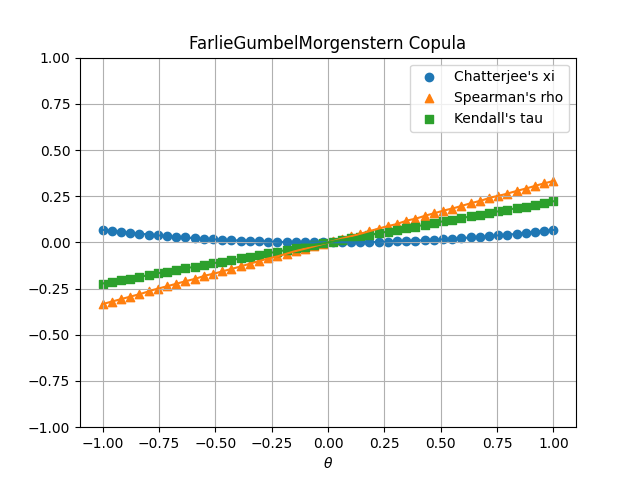}
        \includegraphics[width=0.29\textwidth]{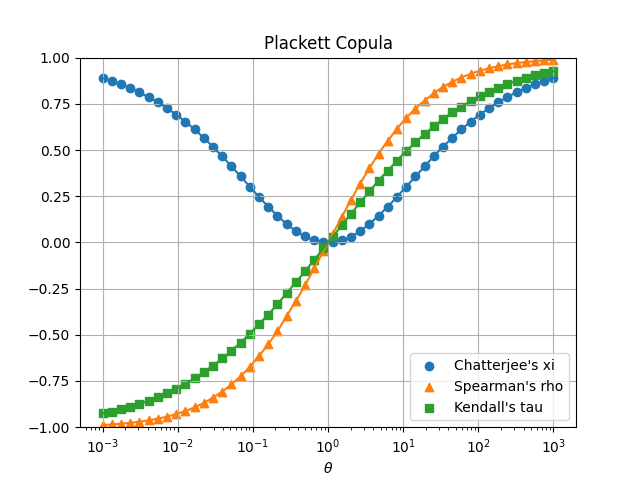} 
        \includegraphics[width=0.29\textwidth]{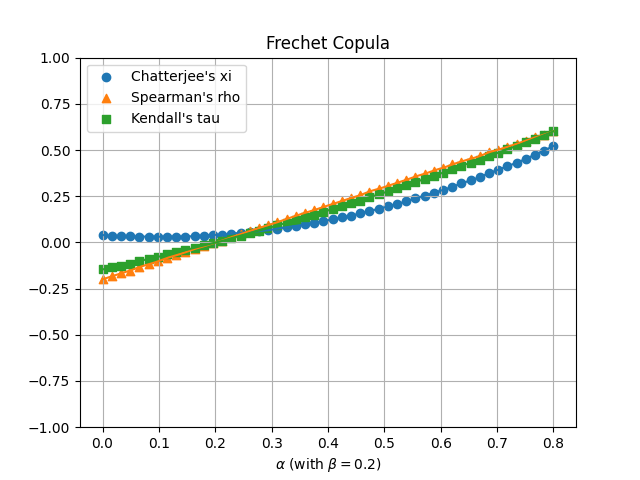} \\[-0.2em]
        \includegraphics[width=0.29\textwidth]{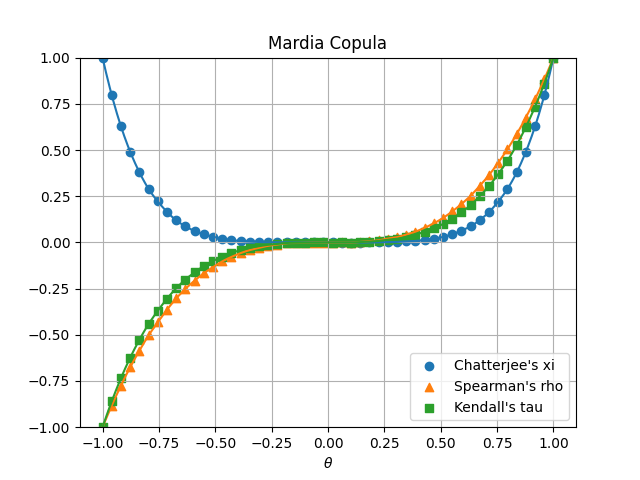}
        \includegraphics[width=0.29\textwidth]{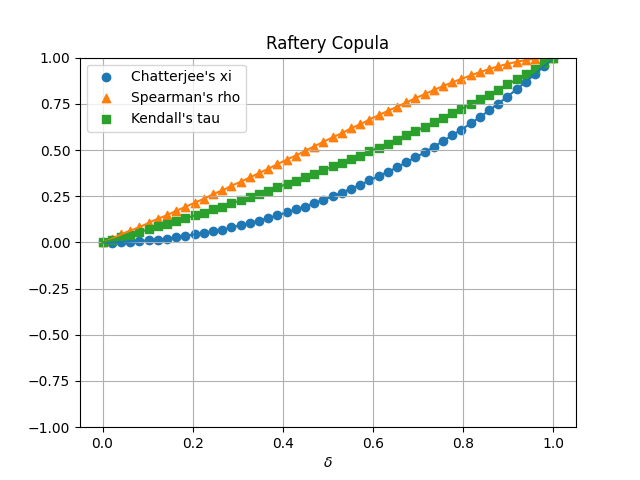}
        \centering
        \caption{
            Chatterjee's xi, Spearman's rho and Kendall's tau for the unclassified copula families  in Table \ref{tab:copulas}.
            We consider $\beta=0.2$ for the Fréchet copula family.
        }
        \label{fig:unclassified_chatterjee_rho_tau}
    \end{centering}
\end{figure}

\begin{table}[tbp]
    \begin{center}
        \scalebox{0.8}{
        \begin{tabular}{lllll}
            \toprule
            Type & Family Name & Chatterjee's xi & Spearman's rho & Kendall's tau \\
            \midrule
            Arch. & Clayton  & $\left(6 \int\limits_{0}^{1} {{}_{2}F_{1}\left(\begin{matrix} \frac{1}{\theta}, \frac{2\theta + 2}{\theta} \\ 1 + \frac{1}{\theta} \end{matrix}\middle| {1 - v^{- \theta}} \right)}\, \de v\right.$ & & $\frac{\theta}{\theta + 2}$ \\
            & & $\bigg.\quad-2\bigg)$ if $\theta > 0$ & & \\
            & AMH & $\left(\frac{3}{\theta} - \frac{\theta}{6} - \frac{2}{3} - \frac{2}{\theta^{2}}\right.$ &$\left(\frac{12(1+\theta)\int_1^{1-\theta} \frac{\ln(t)}{1-t}\de t}{\theta^2}\right.$ & $1 - \frac{2}{3\theta}- \frac{2(1-\theta)^2\ln(1-\theta)}{3\theta^2}$\\
            & & $\left.\quad - \frac{2 \left(\theta - 1\right)^{2} \log{\left(1 - \theta \right)}}{\theta^{3}}\right)$ &$\left.\quad - \frac{24(1-\theta)\ln(1-\theta)}{\theta^2} - \frac{3(\theta + 12)}{\theta}\right)$ & \\
            & Frank  & & $1 - \frac{12}{\theta}(D_1(\theta) - D_2(\theta))$ & $1 - \frac{4}\theta\of{1-D_1(\theta)}$ \\
            & Nelsen7  & $1-\theta$ & $\frac{9 \theta^{2} - 6 \theta - 6 \left(\theta - 1\right)^{2} \log{\left(1 - \theta \right)}}{\theta^{3}} - 3$ & $\frac{2\left(\theta^2 - \theta - \left(\theta - 1\right)^{2} \log{\left(1 - \theta \right)}\right)}{\theta^{2}}$ \\
            & Gumb.-Barn. & $\frac{3}{4 \theta} \mathrm{e}^{\frac{3}{2 \theta}} \mathrm{E}_1\left(\frac{3}{2 \theta}\right)+\frac{\theta}{3}-\frac{1}{2}$ & & \\
            \midrule
            EV & Cuadr.-Augé & $\frac{\delta^{2}}{2 - \delta}$ & $\frac{3\delta}{4-\delta}$ & $\frac{\delta}{2-\delta}$ \\
            & Gumb.-Houg.  & & $\frac{12}{\theta} \int_0^1 \frac{[t(1-t)]^{\frac{1}{\theta}-1}}{\left[1+t^{\frac{1}{\theta}}+(1-t)^{\frac{1}{\theta}}\right]^2} d t-3$ & $\frac{\theta-1}{\theta}$ \\
            & Marsh.-Olk. & $\frac{2 \alpha_{1}^{2} \alpha_{2}}{3 \alpha_{1} + \alpha_{2} - 2 \alpha_{1} \alpha_{2}}$ & $\frac{3 \alpha_1 \alpha_2}{2 \alpha_1-\alpha_1 \alpha_2+2 \alpha_2}$ & $\frac{\alpha_1 \alpha_2}{\alpha_1 - \alpha_1 \alpha_2+\alpha_2}$ \\
            \midrule
            Ellip. & Gaussian & $\frac{3}{\pi} \arcsin \left(\frac{1}{2}+\frac{\rho^2}{1+\rho}\right)-0.5$ & $\frac6{\pi}\arcsin(\rho/2)$ & $\frac2{\pi}\arcsin(\rho)$ \\
            & Student-t &  & $\frac{6}{\pi} \mathrm{E}_{\tilde{V_1}}\{\arcsin (\mathrm{r} \tilde{V_1})\}$ & $\frac2{\pi}\arcsin(\rho)$ \\
            & Laplace &  & $\frac{6}{\pi} \mathrm{E}_{\tilde{V_2}}\{\arcsin (\mathrm{r} \tilde{V_2})\}$ & $\frac2{\pi}\arcsin(\rho)$ \\
            \midrule
            Uncl. & Fréchet & $(\alpha - \beta)^2 + \alpha \beta$ & $\alpha-\beta$ & $\frac{(\alpha-\beta)(\alpha+\beta+2)}{3}$ \\
            & Mardia & $\frac{\theta^{4} \left(3 \theta^{2} + 1\right)}{4}$ & $\theta^3$ & $\frac{\theta^{3} \left(\theta^{2} + 2\right)}{3}$ \\
            & FGM & $\theta^2/15$ & $\theta/3$ & $2\theta/9$\\
            & Plackett &  & $\frac{\theta + 1}{\theta - 1} - 2\frac{2\theta}{(\theta-1)^2}\ln(\theta)$ & \\
            & Raftery & & $\delta\frac{4-3\delta}{(2-\delta)^2}$ & $\frac{2\delta}{3-\delta}$ \\
            \bottomrule
        \end{tabular}
        }
    \caption{
        Closed-form expressions if available for Chatterjee's xi, Spearman's rho and Kendall's tau for different copula families, where
        $D_k(x) := \frac{k}{x^k} \int_0^x \frac{t^k}{e^t-1}\de t$ and $\mathrm{E}_1(x):=\int_1^{\infty} \mathrm{e}^{-x s} / s \mathrm{~d} s$.
        Further, ${}_2F_1$ is the hypergeometric function defined in \eqref{frm:hypergeometric_function}, and $\tilde{V}_1$, $\tilde{V}_2$ are random variables with a mixing density given in \cite[Proposition 1]{heinen2020spearman}.
    }
    \label{tab:all_families_rho_and_tau}
\end{center}
\end{table}

It is well-known that Kendall's tau, Spearman's rho as well as the lower and upper tail dependence coefficients are increasing with respect to the lower orthant order, see Lemmas \ref{lemmoc} and \ref{lem:tail_consistent_with_lo}.
By Lemmas \ref{lemcharschur} and \ref{lemconS}, we know that Chatterjee's rank correlation is increasing with respect to the Schur order for conditional distributions/copula derivatives.
From Tables \ref{tab:arch_results} and \ref{tab:non_arch_results}, we see that many well-known bivariate copula families exhibit monotonicity properties with respect to the lower orthant order and the Schur order for copula derivatives.
Consequently, these ordering properties explain the monotonicity of Kendall's tau, Spearman's rho and Chatterjee's xi for many copula (sub-)families in Figures\footnote{Each plot has $50$ data points per measure of association, and each data point is estimated by sampling from the copula one million times.} \ref{fig:arch_chatterjee_rho_tau}--\ref{fig:unclassified_chatterjee_rho_tau}.
For example, we know from Table \ref{tab:arch_results} that the Clayton copulas \((C_{\theta}^{\text{Cl}})_{\theta\in [-1,\infty)}\) are increasing in their parameter with respect to \(\leq_{lo}\) on the entire parameter space \([-1,\infty)\) and increasing/decreasing with respect to \(\leq_{\partial S}\) whenever the parameter \(\theta\) is non-negative/non-positive.
Hence, Kendall's tau and Spearman's rho are both increasing in the Clayton copula parameter \(\theta\) while Chatterjee's xi is decreasing in \(\theta\) for \(\theta\leq 0\) and increasing in \(\theta\) for \(\theta\geq 0\,,\) see Figure \ref{fig:arch_chatterjee_rho_tau}. \\
We also see that Kendall's tau, Spearman's rho, and Chatterjee's xi are all continuous in the underlying copula family parameters, which is a consequence of continuity of the copulas in their parameter with respect to uniform convergence and weak conditional convergence, respectively, see, e.g., \cite{Kasper-2021}.
In particular, if the underlying copulas converge to the lower/upper Fr\'{e}chet copula, Kendall's tau and Spearman's rho converge to \(-1\)/\(+1\) while Chatterjee's rank correlation converges to \(+1\) in both cases.
In the case where the underlying copula is the independence copula, all three measures attain the value $0$.
For elliptical copulas, this can only be achieved by the Gaussian copula with parameter \(0\,,\) even though the plots in Figure \ref{fig:elliptical_chatterjee_rho} look very similar.
We further observe from the plots that Kendall's tau, Spearman's rho, and Chatterjee's xi are often ordered in the same way for fixed copula parameters.
In particular, Figure \ref{fig:ev_chatterjee_rho} suggests that \(\rho_S(C_\theta)\geq \tau(C_\theta)\geq \xi(C_\theta)\) for all extreme-value copulas and all parameters \(\theta\,.\)
The first inequality is generally correct for CI copulas, i.e.,
\begin{align*}
    C \quad \text{CI} \quad \Longrightarrow \quad \rho_S(C)\geq \tau(C)\geq 0\,,
\end{align*}
see \cite[Theorem 5.2.8]{Nelsen-2006}.
The plots and simulations also suggest that \(\tau(C)\geq \xi(C)\geq 0\) if the underlying copula is CIS.
However, we are not aware of a proof for this conjecture. \\
While closed-form expressions can easily be determined for the tail-dependence coefficients, see Tables \ref{tab:arch_results} and \ref{tab:non_arch_results}, closed-form formulas are generally not applicable for Chatterjee's rank correlation, Kendall's tau and Spearman's rho.
In Table \ref{tab:all_families_rho_and_tau}, we give some expressions that allow for a fast calculation of the respective measures.
Concerning Chatterjee's rank correlation, the expressions for the Clayton, Ali-Mikail-Haq, Nelsen7, Marshall-Olkin (see \cite[Example 4.2]{fuchs2021quantifying} for \(\alpha_1=1\)), and Cuadras-Augé families are up to our knowledge new, see Appendix \ref{subsec:computations_for_xi_rho_and_tau} for the calculations.

\section*{Conclusion}

In this paper, we have studied dependence properties of more than 35 well-known copula families with the focus on the Schur order for conditional distributions, which is a rearrangement-invariant dependence order that is consistent with Chatterjee's rank correlation.
In Section 3, we have provided a comprehensive overview of the copula families and their dependence properties.
Many of the considered copula families turn out to be Schur ordered either on the full parameter space or on a certain range of parameters, see Tables \ref{tab:arch_results} and \ref{tab:non_arch_results}.
Further, for some copula families, we have derived new closed-form expressions of Chatterjee's rank correlation.

\section*{Acknowledgements}
The first author gratefully acknowledges the support of the Austrian Science Fund (FWF) project
{P 36155-N} \emph{ReDim: Quantifying Dependence via Dimension Reduction}
and the support of the WISS 2025 project 'IDA-lab Salzburg' (20204-WISS/225/197-2019 and 20102-F1901166-KZP).
\appendix

\section{Appendix}
\label{sec:appendix}

In the sequel, we justify the properties in the Tables \ref{tab:arch_results}, \ref{tab:non_arch_results}, and \ref{tab:all_families_rho_and_tau} either by computation or by providing references.

\subsection{Computations for the Archimedean copula families in Table \ref{tab:arch_results}}
\label{subsec:archimedean}

The Gumbel-Hougaard copula family will be discussed in Subsection \ref{subsec:extreme_value}, as this family is not only Archimedean but also an extreme-value copula family.

\subsubsection{CI/CD and $\TP$}
\label{subsubsec:archimedean_ci_and_TP2}

For the CI/CD and $\TP$ properties, we repeatedly use a few classical observations.
First, recall that $\TP$ implies CI, see \eqref{implposdepcon}.
Further, since CI implies PLOD, the product copula $\Pi$ is at every point the smallest copula that is CI in the sense that for any other CI copula $C$ it holds $\Pi(u,v)\leq C(u,v)$ for all $(u,v)\in[0,1]^2$.
Likewise, it is the largest copula that is CD.
Consequently, it is clear that if the lower or upper tail dependence coefficient is strictly positive, then CD cannot hold.
$W$ is CD and $M$, noting that $M$ has no Lebesgue density, is CI but not $\TP$.
Lastly, $\frac{\Pi}{\Sigma - \Pi}$ is CI and $\TP$ as a special case of the Clayton copula with $\theta = 1$. \\
Recall from \eqref{lempdparch} that a bivariate Archimedean copula with inverse generator $\psi$ is CI/CD if and only if $-\psi'$ is log-convex/log-concave on the positive real line, and it is $\TP$ if and only if $\psi''$ is log-convex on the positive real line.
In a number of cases, the $\TP$ property follows from the generator of the copula family being completely monotone, see \cite[Theorem 2.14]{Mueller-2005}.
A list of copula families where this applies is given in \cite[Table 2]{hofert2008sampling} as well as in \cite[Exercise 4.24]{Nelsen-2006}.
These references however only verify $\TP$ for some parameter intervals.
If $\TP$ does not hold for all parameter choices, we show that their specified ranges in which $\TP$ holds cannot be extended.
In the following, we check one-by-one for which parameters the Archimedean copulas in Table \ref{tab:arch_overview} are CI/CD and $\TP$.

\begin{itemize}
    \item Clayton ($\theta \geq -1$):
    $C^{\text{Cl}}_0=\Pi$ is CD and for all $\theta < 0$, consider $0<y<-1/\theta$.
    Then, it is
    \[
        \psi'(y) ~=~ - \left(\theta y + 1\right)^{-1 - \frac1{\theta}},
        \quad (\log(-\psi'))'(y) ~=~ \frac{- \theta - 1}{\theta y + 1},
        \quad (\log(-\psi'))''(y) ~=~ \frac{\theta \left(\theta + 1\right)}{\left(\theta y + 1\right)^2}
    .\]
    As we assumed $\theta < 0$, $(\log(-\psi'))''$ is non-positive, so in this case the copula family is also CD.
    The $\TP$ property for $\theta \geq 0$ follows from \cite[Table 2]{hofert2008sampling}.
    \item Nelsen2 ($\theta \geq 1$):
    Let $\theta\geq 1$ be fixed and let $\varepsilon>0$ be sufficiently small.
    Then, it is
    \[ 
        C^{\text{N2}}_\theta\of{1 - \frac{1+\varepsilon}{2^{1/\theta}}, 1 - \frac{1+\varepsilon}{2^{1/\theta}}}
        = 0
    ,\]
    so that PLOD fails to hold for any choice of $\theta$.
    On the other hand, for $\theta>1$, the upper tail dependence coefficient is strictly positive, so that also CD fails to hold.
    \item Ali-Mikhail-Haq ($-1 \leq \theta \leq 1$):
    In the case $\theta\geq 0$, the $\TP$ property follows from \cite[Table 2]{hofert2008sampling}.
    It is immediate that PLOD does not hold as $C^{\text{AMH}}_1\nleq_{lo}\Pi$, so $C^{\text{AMH}}_1$ is not CD and for $\theta < 1$, it is
    \[
        \psi'(y) ~=~ \frac{\left(\theta - 1\right) e^{y}}{\left(\theta - e^{y}\right)^2},
        \quad (\log(-\psi'))'(y) ~=~ \frac{\theta + e^{y}}{\theta - e^{y}},
        \quad (\log(-\psi'))''(y) ~=~ \frac{2 \theta e^{y}}{\left(\theta - e^{y}\right)^2} 
    .\]
    $(\log(-\psi'))''$ is non-positive if and only if $\theta\leq 0$, so the Ali-Mikhail-Hak copula is CD if and only if $\theta \leq 0$.
    \item Frank ($\theta\in\R$):
    In the case $\theta\geq 0$, the $\TP$ property follows from \cite[Table 2]{hofert2008sampling}.
    For $\theta < 0$, note that
    \small
    \[
        \psi'(y) = \frac{1 - e^{\theta}}{\theta \left(1- e^{\theta} + e^{\theta + y} + 1\right)},
        ~~ (\log(-\psi'))'(y) = \frac{-e^{\theta + y}}{1 - e^{\theta} + e^{\theta + y}},
        ~~ (\log(-\psi'))''(y) = \frac{\left(e^{\theta} - 1\right) e^{\theta + y}}{\left(1 - e^{\theta} + e^{\theta + y} \right)^2}
    ,\]
    \normalsize
    and $(\log(-\psi'))''$ is non-positive if and only if $\theta < 0$.
    Since $C^{\text{Fr}}_{0}=\Pi$, this copula family is CD if and only if $\theta \leq 0$.
    \item Joe ($\theta \geq 1$):
    The generators of this copula family are completely monotone by \cite[Table 2]{hofert2008sampling}.
    The $\TP$ and CI properties hence follow for all $\theta$.
    \item Nelsen7 ($0 \leq \theta \leq 1$):
    Let $v\in[0,1]$ be given and observe that for $u\neq v$, it is
    \[
        \partial_2 C_{\delta}^{\text{N7}}(u,v)
        = \left(\theta v - \theta + 1\right) \1_{\setof{u \geq \frac{(\theta-1)(v-1)}{1+\theta(v-1)}}}
    ,\]
    which is trivially non-decreasing in $u$.
    Hence, by symmetry of the copula family, CD follows on the full parameter space.
    \item Nelsen8 ($1 \leq \theta$): 
    At $\theta=1$, it is $C^{\text{N8}}_1=W$, which is CD.
    For $u=v=t\in(0,1)$, it is
    \[
        C^{\text{N8}}_{\theta}(t,t) = \left(\frac{\theta^2 t^2-(1-t)^2}{\theta^2-(\theta-1)^2(1-t)^2}\right)_+
    .\]
    In the case of $\theta>1$, when $t>0$ is small enough, this expression become exactly zero.
    This shows that PLOD, and hence CI, fails not hold.
    Furthermore, it is
    \[
        \frac{C^{\text{N8}}_{\theta}(t,t)}{t^2}
        \geq \frac{\theta^2 - \frac1{t^2}(1-t)^2}{\theta^2-(\theta-1)^2(1-t)^2}
        >1
    \]
    when $\theta>2$ and $t$ is sufficiently close to $1$, showing that CD does not hold in this case.
    \item Gumbel-Barnett ($0 \leq \theta \leq 1$): $C_{0}=\Pi$ is $\TP$ and CD, and for all other $\theta$, it is
    \[
        \psi'(y) = - \frac{e^{y - \frac{e^{y} - 1}{\theta}}}{\theta},
        \quad (\log(-\psi'))'(y) = \frac{\theta - e^{y}}{\theta},
        \quad (\log(-\psi'))''(y) = - \frac{e^{y}}{\theta}
    .\]
    $(\log(-\psi'))''$ is non-positive, so this copula family is CD for all parameters.
    \item Nelsen10 ($0 \leq \theta \leq 1$): $C_{0}=\Pi$ is $\TP$ and CD, and for all other $\theta$, it is
    \small
    \[
        \psi'(y) = - \frac{2^{\frac1{\theta}} \left(e^{y} + 1\right)^{-1 - \frac1{\theta}} e^{y}}{\theta},
        \quad (\log(-\psi'))'(y) = \frac{\theta - e^{y}}{\theta \left(e^{y} + 1\right)},
        \quad (\log(-\psi'))''(y) = - \frac{\theta + 1}{4 \theta \cosh^2{\left(\frac{y}2 \right)}}
    .\]
    \normalsize
    $(\log(-\psi'))''$ is non-positive, so this copula is again CD for all parameters.
    \item Nelsen11 ($0 \leq \theta \leq 1/2$):
    $C_{0}=\Pi$ is $\TP$ and CD.
    For $\theta>0$ and $y\in(0,\log(2))$, it is
    \[
        \psi'(y) = - \frac{\left(2 - e^{y}\right)^{- \frac{\theta - 1}{\theta}} e^{y}}{\theta},
        \quad (\log(-\psi'))'(y) = \frac{- 2 \theta + e^{y}}{\theta \left(e^{y} - 2\right)},
        \quad (\log(-\psi'))''(y) = \frac{2 \left(\theta - 1\right) e^{y}}{\theta \left(e^{y} - 2\right)^2}
    .\]
    Since $\theta\leq 1/2$, $(\log(-\psi'))''$ is non-positive and hence the copula is CD.
    \item Nelsen12 ($1 \leq \theta$):
    The generators of this copula family are completely monotone by \cite[Table 2]{hofert2008sampling}.
    The $\TP$ and CI properties hence follow for all $\theta$.
    \item Nelsen13 ($0 < \theta$):
    We have
    \begin{align*}
        \psi'(y) ~=~ & - \frac{\left(y + 1\right)^{- \frac{\theta - 1}{\theta}} e^{1 - \left(y + 1\right)^{\frac1{\theta}}}}{\theta}, \allowdisplaybreaks\\
        (\log(-\psi'))'(y) ~=~ & \frac{- \theta - \left(y + 1\right)^{\frac1{\theta}} + 1}{\theta \left(y + 1\right)}, \allowdisplaybreaks\\
        (\log(-\psi'))''(y) ~=~ & \frac{\theta^2 + \theta \left(y + 1\right)^{\frac1{\theta}} - \theta - \left(y + 1\right)^{\frac1{\theta}}}{\theta^2 \left(y^2 + 2 y + 1\right)}
    .\end{align*}
    When $\theta < 1$, then $(\log(-\psi'))''$ is non-positive, so in this case the copula is CD in this case.
    Also, since $C^{\text{N13}}_{1} =\Pi $, it is CD at $\theta=1$.
    For $\theta\geq 1$ the $\TP$ and CI properties follow from the generators of this copula family being completely monotone by \cite[Table 2]{hofert2008sampling}.
    \item Nelsen14 ($1 \leq \theta$):
    The generators of this copula family are completely monotone by \cite[Table 2]{hofert2008sampling}.
    The $\TP$ and CI properties hence follow for all $\theta$.
    \item Genest-Ghoudi ($1 \leq \theta$):
    For $y\in(0,1)$, we have
    \begin{align*}
        \psi'(y) ~=~ & - y^{- \frac{\theta - 1}{\theta}} \left(1 - y^{\frac1{\theta}}\right)^{\theta - 1}, \\
        (\log(-\psi'))'(y) ~=~ & \frac{\theta - 1}{\theta y \left(y^{\frac1{\theta}} - 1\right)}, \\
        (\log(-\psi'))''(y) ~=~ & \frac{- \theta^2 y^{\frac1{\theta}} + \theta^2 - \theta + y^{\frac1{\theta}}}{\theta^2 y^2 \left(y^{\frac2{\theta}} - 2 y^{\frac1{\theta}} + 1\right)}
    .\end{align*}
    Substituting $z=y^{1/\theta}$, it is immediate that the denominator cannot become negative, and the numerator reads as $\theta^2(1-z) + z - \theta$, which for $\theta>1$ becomes negative as $y\rightarrow 1$, and positive as $y\rightarrow 0$.
    Consequently, this copula family is neither CI nor CD in this case.
    At $\theta=1$, it is $C^{\text{GG}}_1=W$, which is CD.
    \item Nelsen16 ($0 \leq \theta$):
    CD cannot hold for any choice of $\theta$ as the lower tail dependence coefficient is strictly positive.
    Further, for $C^{\text{N16}}_{0} = W$ and for $\theta > 0$, it is
    \begin{align}
        \psi'(y) ~=~ & \frac{1- \theta - y}{2 \sqrt{4 \theta + \left(\theta + y - 1\right)^{2}}} - \frac12 , \nonumber\allowdisplaybreaks\\
        (\log(-\psi'))'(y) ~=~ & \frac{4 \theta}{\left(4 \theta + \left(\theta + y - 1\right)^2\right) \left(\theta + y - \sqrt{4 \theta + \left(\theta + y - 1\right)^{2}} - 1\right)}, \label{frm:n16_ci_2} \\
        \psi''(y) ~=~ & \frac{2 \theta}{\left(4 \theta + \left(\theta + y - 1\right)^{2}\right)^{\frac{3}{2}}}, \nonumber\allowdisplaybreaks\\
        (\log(\psi''))'(y) ~=~ & \frac{3 \left(- \theta - y + 1\right)}{4 \theta + \left(\theta + y - 1\right)^{2}}, \nonumber \\
        (\log(\psi''))''(y) ~=~ & \frac{3 \left(\left(\theta + y - 1\right)^{2} - 4 \theta\right)}{\left(4 \theta + \left(\theta + y - 1\right)^{2}\right)^{2}} \label{frm:nelsen16_log3}
    .\end{align}
    \eqref{frm:nelsen16_log3} has roots for $y = 1- \theta \pm 2\sqrt{\theta}$, and the larger root will only be non-positive for $\theta \geq 3 + 2\sqrt{2}$, so that $\TP$ only holds under this condition.
    Regarding CI, let $f_\theta(y)$ denote the denominator of \eqref{frm:n16_ci_2}.
    Then, it is 
    \begin{align*}
        f'_\theta(y)
        = & \inbrackets{\sqrt{4 \theta + \left(\theta + y - 1\right)^{2}} - 2 \left(\theta + y - 1\right)} \left(\sqrt{4 \theta + \left(\theta + y - 1\right)^{2}} - (\theta + y - 1)\right)
    .\end{align*}
    Both factors of $f'_\theta(y)$ are non-increasing in $y$.
    Furthermore, it is $f'_0(0)>0$, $f'_\theta(0)=0$ if and only if $\theta=3$, and, e.g., $f'_5(0) < 0$.
    Consequently, \eqref{frm:n16_ci_2} is non-decreasing in $y$ if and only if $\theta\geq 3$, and thus CI holds in this range.
    \item Nelsen17 ($\theta \neq 0$): We have
    \begin{align}
        \psi'(y) ~=~ & \frac{\left(\frac{2^{\theta} e^{y}}{2^{\theta} e^{y} - 2^{\theta} + 1}\right)^{\frac{1}{\theta}} \left(1 - 2^{\theta}\right)}{\theta \left(2^{\theta} e^{y} - 2^{\theta} + 1\right)}, \nonumber\allowdisplaybreaks\\
        (\log(-\psi'))'(y) ~=~ & \frac{2^{- \theta} \left(2^{\theta} \left(1 - 2^{\theta}\right) - 4^{\theta} \theta e^{y}\right)}{\theta \left(2^{\theta} e^{y} - 2^{\theta} + 1\right)}, \nonumber\allowdisplaybreaks\\
        (\log(-\psi'))''(y) ~=~ & \frac{\left(  (2^{\theta} - 1 +  \theta \left(2^{\theta} - 1\right)\right) 2^{\theta}e^{y}}{\theta \left(2^{\theta} e^{y} - 2^{\theta} + 1\right)^{2}}, \label{frm:nelsen17log3}\\
        \psi''(y) ~=~ & \frac{\left(\frac{2^{\theta} e^{y}}{2^{\theta} e^{y} - 2^{\theta} + 1}\right)^{\frac{1}{\theta}} \left(2^{\theta} - 1\right) \left(2^{\theta} \theta e^{y} + 2^{\theta} - 1\right)}{\theta^{2} \left(2^{\theta} e^{y} - 2^{\theta} + 1\right)^{2}}, \nonumber\allowdisplaybreaks\\
        (\log(\psi''))'(y) ~=~ & \frac{2^{\theta} \theta^{2} \left(2^{\theta} e^{y} - 2^{\theta} + 1\right) e^{y} - 2^{\theta + 1} \theta (2^{\theta} \theta e^{y} + 2^{\theta} - 1) e^{y} + \left(1 - 2^{\theta}\right) \left(2^{\theta} \theta e^{y} + 2^{\theta} - 1\right)}{\theta \left(2^{\theta} e^{y} - 2^{\theta} + 1\right) \left(2^{\theta} \theta e^{y} + 2^{\theta} - 1\right)}, \nonumber\allowdisplaybreaks\\
        (\log(\psi''))''(y) ~=~ & 2^{\theta}e^{y}\left(\frac{2^{\theta} \theta^{2} \left(2^{\theta} e^{y} - 2^{\theta} + 1\right)^{2} + 2^{\theta} \left(2^{\theta} \theta e^{y} + 2^{\theta} - 1\right)^{2} + 2^{\theta + 1} \theta (2^{\theta} \theta e^{y} + 2^{\theta} - 1)^{2}}{\theta \left(2^{\theta} e^{y} - 2^{\theta} + 1\right)^{2} \left(2^{\theta} \theta e^{y} + 2^{\theta} - 1\right)^{2}} \right. \label{frm:nelsen17log4}, \\
        & \left.\frac{- \theta^{2} \left(2^{\theta} e^{y} - 2^{\theta} + 1\right)^{2} - 2 \theta \left(2^{\theta} \theta e^{y} + 2^{\theta} - 1\right)^{2} - \left(2^{\theta} \theta e^{y} + 2^{\theta} - 1\right)^{2}}{\theta \left(2^{\theta} e^{y} - 2^{\theta} + 1\right)^{2} \left(2^{\theta} \theta e^{y} + 2^{\theta} - 1\right)^{2}} \right)\nonumber.
    \end{align}
    At $\theta=-1$, it is $C^{N7}_{-1}=\Pi$, which is $\TP$, so consider $\theta\neq -1$.
    The numerator in \eqref{frm:nelsen17log3} is non-negative if and only if $\theta > 0$ or $\theta \leq -1$.
    Since the denominator flips signs if and only if $\theta < 0$, the whole expression is non-negative if and only if $\theta \geq -1$, and non-positive otherwise.
    Thus, this copula family is CI if and only if $\theta \geq -1$, and CD otherwise.
    In order to characterize the $\TP$ property, let's denote $a:=2^\theta e^y - 2^\theta + 1$ and $b:=2^\theta \theta e^y + 2^\theta - 1$.
    Then, the numerator in \eqref{frm:nelsen17log4} writes as 
    \[
        2^{\theta} a^{2} \theta^{2} + 2^{\theta} b^{2} + 2^{\theta + 1} b^{2} \theta - a^{2} \theta^{2} - 2 b^{2} \theta - b^{2}
        = \left(2^{\theta} - 1\right) \left(a^{2} \theta^{2} + (1+2 \theta)b^{2}\right)
    ,\]
    so for $\theta > 0$ numerator and denominator are clearly non-negative for any $y$, and thus the copula family is $\TP$ for $\theta > 0$.
    For $\theta \in (-1, 0)$, the denominator and $2^{\theta}-1$ are negative, so non-negativity of $(\log(\psi''))''$ is driven by $a^{2} \theta^{2} + (1+2 \theta)b^{2}$.
    This expression can be rewritten as 
    \[
        a^{2} \theta^{2} + (1+2 \theta)b^{2}
        = 2^{\theta + 1} (1+\theta)\theta\left(2^{\theta} \theta e^{y} + 2^{\theta} - 1\right) e^{y} + (\theta +1)^2 (2^\theta-1)^2
    ,\]
    and the right-hand side is indeed non-negative for $\theta \in (-1, 0)$, so the $\TP$-property follows also here.
    \item Nelsen18 ($2 \leq \theta$):
    The characterization for CI via \eqref{lempdparch} doesn't apply as the inverse generator is not differentiable at $y=e^{-\theta}$.
    Notice however that the copula is exactly zero on the set
    \[
        \setof{1+\frac{\theta}{\ln \left(e^{\theta /(u-1)}+e^{\theta /(v-1)}\right)} \leq 0}
        = \setof{v \leq 1 + \frac{\theta}{\ln\of{e^{-\theta} - e^{\theta/(u-1)}}}}
    .\]
    Note also that for $0<u<1$ it is
    \[
        1 + \frac{\theta}{\ln\of{e^{-\theta} - e^{\theta/(u-1)}}}
        > 1 + \frac{\theta}{\ln\of{e^{-\theta}}}
        = 0
    ,\]
    which shows that PLOD fails to hold.
    On the other hand, the upper tail dependence coefficient is strictly positive, so that also CD fails to hold. 
    \item Nelsen19 and Nelsen20 ($0 \leq \theta$):
    The generators of these copula families are completely monotone by \cite[Table 2]{hofert2008sampling}.
    The $\TP$ and CI properties hence follow for all $\theta$.
    \item Nelsen21 ($1 \leq \theta$): For $\varepsilon>0$, let
    \[
        u, v := 1 - \inbrackets{1-\varepsilon^\theta}^{\frac1{\theta}}
    .\]
    Then, for $\varepsilon$ sufficiently small, $0<u,v<1$ and $C^{\text{N21}}_\theta(u,v) = 0$, so that PLOD fails to hold for any choice of $\theta$.
    On the other hand, the upper tail dependence coefficient is strictly positive for $\theta>1$, so that also CD fails to hold in this case.
    \item Nelsen22 ($1 \leq \theta$): $C_{0}=\Pi$ is $\TP$ and CD.
    For all other $\theta$ and for $y\in(0,\pi/2)$, it is
    \begin{align*}
        \psi'(y) ~=~ & - \frac{\left(1 - \sin{\left(y \right)}\right)^{-1 + \frac1{\theta}} \cos{\left(y \right)}}{\theta}, \\
        (\log(-\psi'))'(y) ~=~ & \frac{\frac{\theta}{\cos{\left(y \right)}} - \tan{\left(y \right)} - \frac1{\cos{\left(y \right)}}}{\theta}, \\
        (\log(-\psi'))''(y) ~=~ & \frac{\theta \sin{\left(y \right)} - \sin{\left(y \right)} - 1}{\theta \cos^2{\left(y \right)}}
    .\end{align*}
    $(\log(-\psi'))''$ is non-positive as $\theta$ varies between $0$ and $1$, and $y$ can become at most $\pi/2$.
    Hence, this copula is CD for all parameters.
\end{itemize}
Note that all considered Archimedean copula families turn out to be CI if and only if they are $\TP$, see \cite[Remark 2.13]{Mueller-2005} for an Archimedean copula that is CI but not $\TP$.

\subsubsection{Lower orthant ordering and Schur ordering}
\label{subsubsec:archimedean_lo_order}
The lower orthant (or, equivalently, concordance) order properties of the Archimedean copulas can be found in \cite{Nelsen-2006}.
The families Ali-Mikhail-Haq, Frank, Joe, Nelsen7-8, Nelsen12-18, Nelsen20 and Nelsen21 are positively ordered by \cite[Exercise 4.18 (a)]{Nelsen-2006}, Clayton by \cite[Exercise 4.14]{Nelsen-2006}, Nelsen2 by \cite[Exercise 4.23]{Nelsen-2006}, and Nelsen19 by \cite[Exercise 4.15]{Nelsen-2006}.
Nelsen11 and Nelsen22 are negatively ordered by \cite[Exercise 4.18 (b)]{Nelsen-2006} and Gumbel-Barnett by \cite[Example 4.10]{Nelsen-2006}.
Nelson10 is unordered by \cite[Exercise 4.16]{Nelsen-2006}. \\
Recall from Lemma \ref{lem:3.16} that when a copula is CI, then lower orthant ordering and Schur order are equivalent.
Hence, the column on the Schur order in Table \ref{tab:arch_results} is a direct consequence of the columns on CI and the lower orthant order.
For those Archimedean copulas that are not CI/CD, we check the Schur ordering numerically by approximating the copulas with $40\times 40$ checkerboard copulas and rearranging the approximations with \cite[Algorithm1]{strothmann2022rearranged}.
The rearranged checkerboard copulas then need to be pointwise ordered in $\theta$ for the Schur order to hold, see Proposition \ref{proprearcop}.

\subsubsection{Tail dependencies}
\label{subsubsec:archimedean_tails}
Concerning the Archimedean copula families, the formulas for the lower and upper tail dependence coefficients $\lambda_L$ and $\lambda_U$ are given in \cite[Example 5.22]{Nelsen-2006}.

\subsection{Computations for the extreme-value copula families in Table \ref{tab:non_arch_results}}
\label{subsec:extreme_value}

For the extreme-value copulas in Table \ref{tab:all_families_rho_and_tau}, the associated Pickands dependence functions are given in Table \ref{tab:non_arch_overview}.
Note that the Marshall-Olkin extreme-value copula family yields the \textit{Cuadras-Augé} copula family in the special case $\alpha_1 = \alpha_2$.

\subsubsection{CI/CD and $\TP$}
\label{subsubsec:extreme_value_ci_and_TP2}

Extreme-value copulas are always CI, see \cite[Th\'{e}or\`{e}me 1]{Guillem-2000}.
The Marshall-Olkin copula is $\TP$ if and only if $\alpha_1 \wedge \alpha_2 = 0$, see \cite[Example 3.6]{fuchs2023total}.
In particular, the Cuadras-Augé copulas are $\TP$ only for $\delta=0$.
The Gumbel-Hougaard copula family is Archimedean with a completely monotone generator, see \cite[Table 2]{hofert2008sampling}.
The $\TP$ property hence follows for all $\theta$.
Our numerical checks for the log-supermodularity of the copula's density function on a $40\times40$ grid indicate that the Hüsler-Reiss copula family is also $\TP$, but not the Joe-EV, Marshall-Olkin, Tawn and t-EV copula families.

\subsubsection{Lower orthant ordering and Schur ordering}
\label{subsubsec:extreme_value_lo_ordering}

Due to Theorem \ref{thm:ordering_ev_copulas}, we only need to check whether Pickands dependence functions are on the interval \((0,1)\) pointwise decreasing/increasing in the parameter to obtain that the associated extreme-value copulas are increasing/decreasing with respect to the lower orthant order and Schur order for copula derivatives.

\begin{itemize}
    \item Galambos ($0<\delta$), Joe ($0\leq\alpha_1,\alpha_2\leq 1,~0<\delta$), BB5 ($1\leq\theta,~0<\delta$) and Tawn ($0\leq\alpha_1,\alpha_2\leq 1,~1\leq\theta$): Notice that Galambos Pickands function satisfies
    \[
        1 - \inbrackets{t^{-\delta} + (1-t)^{-\delta}}^{-1/\delta}
        = 1 - \frac{1}{\lVert (1/t, 1/(1-t))\rVert_{\delta}}
    ,\]
    where \(\lVert \cdot \rVert_p\) denotes the \(p\)-norm on \(\R\,.\)
    Since $p$-norms are decreasing in \(p\), the Galambos Pickands function is non-increasing in its parameter, and it follows that the copula family is positively ordered.
    In the same way, the Joe extreme-value copula family is positively ordered for fixed $\alpha_1$ and $\alpha_2$, and the BB5 copula family is for fixed $\theta$.
    Also, we can write Tawn's Pickands function as 
    \small
    \[
        (1-\alpha_1)(1-t) + (1-\alpha_2)t + \inbrackets{(\alpha_1(1-t))^{\theta} + (\alpha_2 t)^\theta}^{1/\theta}
        = (1-\alpha_1)(1-t) + (1-\alpha_2)t + \lVert (\alpha_1(1-t), \alpha_2 t)\rVert_{\theta}
    ,\]
    \normalsize
    and hence the same argument works here, showing that the Tawn copula family is positively ordered.
    \item Gumbel-Hougaard ($1\leq \theta$): This copula family is lower orthant ordered, see, e.g., \cite[Example 4.12]{Nelsen-2006}
    \item Hüsler-Reiss ($0\leq\delta$): The lower orthant ordering is mentioned in \cite[Chapter 5]{Joe-1997}.
    \item Marshall-Olkin ($0\leq\alpha_1,\alpha_2\leq1$) and Cuadras-Augé ($0\leq\delta\leq1$): The lower orthant ordering for Cuadras-Augé is shown in, e.g., \cite[Example 2.19]{Nelsen-2006}, which corresponds to the special case of $\alpha_1=\alpha_2$ for the Marshall-Olkin copula family.
    \item t-EV ($-1<\rho< 1,~0<\nu$): The t-EV Pickands function is symmetric in $t$ and decreases trivially for $t=1/2$ in $\rho$, so we may assume without loss of generality $t>1/2$.
    First, notice that 
    \[
        \frac{d}{d\rho} z_t
        = \frac{d}{d\rho} \sqrt{\frac{1+\nu}{1-\rho^2}}\inbrackets{\of{\frac{t}{1-t}}^{1/\nu}-\rho}
        = \frac{\rho \sqrt{\nu + 1} \left(- \rho + \left(\frac{t}{1 - t}\right)^{\frac{1}{\nu}}\right)}{\left(1 - \rho^{2}\right)^{1.5}} - \frac{\sqrt{\nu + 1}}{\sqrt{1-\rho^2}}
    .\]
    Secondly, the probability density function of the Student-t distribution with $\nu+1$ degrees of freedom is given by 
    \[
        T'_{\nu + 1}(x)
        = \frac{\Gamma\left(\frac{\nu}{2} + 1\right)}{\sqrt{(\nu + 1) \pi} \Gamma\left(\frac{\nu + 1}{2}\right)}\left(1+\frac{x^2}{\nu + 1}\right)^{-\frac{\nu}{2} - 1}
    .\]
    Together, we get for the Pickands function that
    \small
    \begin{align}
        & \frac{d}{d\rho} A_{\nu, \rho}(t) \nonumber\allowdisplaybreaks\\
        = ~ & \frac{d}{d\rho}(1-t)T_{\nu + 1}(z_{1-t}) + t T_{\nu + 1}(z_t) \nonumber\allowdisplaybreaks\\
        = ~ & (1-t)T'_{\nu + 1}(z_{1-t})\frac{d}{d\rho} z_{1-t} + tT'_{\nu + 1}(z_t)\frac{d}{d\rho} z_t \nonumber\allowdisplaybreaks\\
        = ~ & \frac{(1-t)\Gamma\left(\frac{\nu}{2} + 1\right)}{\sqrt{(\nu + 1) \pi} \Gamma\left(\frac{\nu + 1}{2}\right)}\left(1+\frac{\frac{1+\nu}{1-\rho^2}\left(\left(\frac{1-t}{t}\right)^{1/\nu}-\rho\right)^2}{\nu + 1}\right)^{-\frac{\nu}{2} - 1}\left(\frac{\rho \sqrt{\nu + 1} \left(\left(\frac{1-t}{t}\right)^{\frac{1}{\nu}}- \rho\right)}{\left(1 - \rho^{2}\right)^{1.5}} - \frac{\sqrt{\nu + 1}}{\sqrt{1-\rho^2}}\right) \nonumber\allowdisplaybreaks\\
        & + \frac{t\Gamma(\frac{\nu}{2} + 1)}{\sqrt{(\nu + 1) \pi} \Gamma(\frac{\nu + 1}{2})}\left(1+\frac{\frac{1+\nu}{1-\rho^2}\left(\left(\frac{t}{1-t}\right)^{1/\nu}-\rho\right)^2}{\nu + 1}\right)^{-\frac{\nu}{2} - 1}\left(\frac{\rho \sqrt{\nu + 1} \left(\left(\frac{t}{1 - t}\right)^{\frac{1}{\nu}}- \rho\right)}{\left(1 - \rho^{2}\right)^{1.5}} - \frac{\sqrt{\nu + 1}}{\sqrt{1-\rho^2}}\right) \nonumber\allowdisplaybreaks\\
        = ~ & \frac{(1-t)\Gamma(\frac{\nu}{2} + 1)}{\sqrt{\pi} \Gamma(\frac{\nu + 1}{2})\left(1-\rho^2\right)^{(\nu+5)/2}}\left(1 - \rho^2 + \left(\rho - \left(\frac{1-t}{t}\right)^{\frac{1}{\nu}}\right)^{2}\right)^{-\frac{\nu}{2} - 1}\left(\rho \left(\frac{1-t}{t}\right)^{\frac{1}{\nu}}- 1\right) \label{frm:first_line} \\
        & + \frac{t\Gamma(\frac{\nu}{2} + 1)}{\sqrt{\pi} \Gamma(\frac{\nu + 1}{2})\left(1-\rho^2\right)^{(\nu+5)/2}}\left(1 - \rho^2 + \left(\rho - \left(\frac{t}{1-t}\right)^{\frac{1}{\nu}}\right)^{2}\right)^{-\frac{\nu}{2} - 1}\left(\rho \left(\frac{t}{1 - t}\right)^{\frac{1}{\nu}}- 1\right)\,.\label{frm:second_line}
    \end{align}
    \normalsize
    \eqref{frm:first_line} is certainly non-positive as we assumed $t>1/2$.
    \eqref{frm:second_line} can only become positive when $\rho > \left(\frac{1-t}{t}\right)^{\frac{1}{\nu}}$.
    In this case, we get with $a:= (t/(1-t))^{1/\nu}\in(1,\infty)$ that
    \begin{align*}
        \eqref{frm:second_line}/\abs{\eqref{frm:first_line}}
        \quad = \quad & \frac{t}{1-t}\left(\frac{1 - \rho^2 + \left(\rho - \left(\frac{1-t}{t}\right)^{\frac{1}{\nu}}\right)^{2}}{1 - \rho^2 + \left(\rho - \left(\frac{t}{1-t}\right)^{\frac{1}{\nu}}\right)^{2}}\right)^{\frac{\nu}{2} + 1}\frac{\rho \left(\frac{t}{1 - t}\right)^{\frac{1}{\nu}}- 1}{1-\rho \left(\frac{1-t}{t}\right)^{\frac{1}{\nu}}} \allowdisplaybreaks\\
        = \quad & a^{\nu}\left(\frac{1 - \rho^2 + \left(\rho - a^{-1}\right)^{2}}{1 - \rho^2 + \left(\rho - a\right)^{2}}\right)^{\frac{\nu}{2} + 1}\frac{\rho a- 1}{1-\rho a^{-1}} \allowdisplaybreaks\\
        = \quad & \left(a^2\frac{1 - \rho^2 + \left(\rho - a^{-1}\right)^{2}}{1 - \rho^2 + \left(\rho - a\right)^{2}}\right)^{\frac{\nu}{2} + 1}\frac{\rho a- 1}{a(a-\rho)} \allowdisplaybreaks\\
        = \quad & \frac{\rho a- 1}{a(a-\rho)}
        \quad = \quad 1 - \frac{1+a^2}{\rho a}
        \quad < \quad 1
    .\end{align*}
    Consequently, $\frac{d}{d\rho} A_{\nu, \rho}(t) \leq 0$, and it follows that the t-EV extreme-value copula is positively ordered.
\end{itemize}

\subsubsection{Tail dependencies}
\label{subsubsec:extreme_value_tail_dependencies}

The formulas for the upper tail dependence coefficients are taken from \cite[Table 3.2]{eschenburg2013properties}. \\
Lower tail-dependence coefficients are generally trivially given by $\lambda_L = \1_{\setof{A(1/2)=1/2}}$ with the intuition that lower tail dependence exists for extreme-value copulas when there is complete dependence, compare \cite[Section 6.4]{jaworski2010copula}.
For the Marshall-Olkin extreme-value copula, $A(1/2)=1/2$ holds if and only if $\alpha_1=\alpha_2=1$.
In particular, the Cuadras-Augé copula has lower tail dependence if and only if $\delta=1$.
For the Hüsler-Reiss copula family, this requires $\Phi(z_{1/2}) = 1/2 \Leftrightarrow z_{1/2} = 0$, which holds only in the limiting case $\delta = \infty$.
Furthermore, for the t-EV extreme-value copula, it must also be $z_{1/2} = 0$, but again this only holds in the limit, here as $\rho\rightarrow 1$.
For the Tawn extreme-value copula, note that
\[
    A(1/2)
    = 0.5\inbrackets{2 - \alpha_1 - \alpha_2 + \inbrackets{\alpha_1^\theta + \alpha_2^\theta}^{1/\theta}}
    \geq 0.5\inbrackets{2 - \alpha_2}
,\]
and the inequality is strict when $\alpha_2>0$.
The last expression however is louwer bounded by $1/2$ with eqaulity only for $\alpha_2 = 1$, so that $A(1/2)>1/2$ for all parameter choices.
Likewise, for Galambos Pickands function, it is $A(1/2) = 2^{(1 - \delta)/\delta} > 1/2$ for any choice of $\delta$.
The Joe Pickands function is always at least as large as a corresponding Galambos Pickands function, so also in this case $A(1/2) = 1/2$ cannot hold.
For the BB5 Pickands function, it is $A(1/2) = \frac12(2 - 2^{-1/\delta}) > 1/2$, so also here the lower tail dependence coefficient is zero.
Lastly, the tail dependence parameters for the Gumbel-Hougaard copula family are evaluated in, e.g., \cite[Example 4.12]{Nelsen-2006}

\subsection{Computations for the elliptical copula families in Table \ref{tab:non_arch_results}}
\label{subsec:elliptical_copula_families}

\subsubsection{CI/CD and $\TP$}
\label{subsubsec:elliptical_ci_and_TP2}

The Gauss copula is $\TP$ if and only if it is CI if and only if $\rho\geq 0$, see \cite[Theorem 2 and 3]{Rueschendorf-1981b} or \cite[Theorem 3.6]{Mueller-Scarsini-2001}.
The Student-t copula family is not CI, see \cite[Proposition 4.3]{rossell2021dependence}, and in particular not $\TP$.
Lastly, the Laplace distribution is not $\TP$, see \cite[Theorem 4.9 and below]{rossell2021dependence}.
Further, for \(\rho\leq 0\,,\) the Laplace family is not CI, see \cite[Remark 4.1 and Theorem 4.2]{rossell2021dependence}.

\subsubsection{Lower orthant ordering and Schur ordering}
\label{subsubsec:elliptical_lo_ordering}

If the radial variable admits a Lebesgue density, then the copulas associated with a family of elliptical distributions are uniquely determined and by Proposition \ref{prop:ordering_elliptical_copulas}\,(i) \(\leq_{lo}\)-increasing in \(\rho\,.\)
The Gaussian copula family is increasing in \(\rho\) with respect to the Schur order if \(\rho\geq 0\) as a consequence of the CI property.
When all other variables are fixed for an elliptical distribution, positive lower orthant ordering always holds with respect to the correlation parameter $\rho$.
This is a consequence of \cite[Theorem 5.1]{gupta1971inequalities}.
Hence, elliptical copulas are \(\leq_{lo}\)-increasing in the parameter $\rho$ by Proposition \ref{prop:ordering_elliptical_copulas}(i).
Schur order results follow for the Gauss copula when $\rho\geq 0$ as in this case CI holds.

\subsubsection{Tail dependencies}
\label{subsubsec:elliptical_tail_dependencies}
When $\rho\in\setof{-1, 1}$, the upper and lower tail dependence coefficients of the Gauss copula are trivially $1$.
In all other cases, they vanish, see for example \cite[Corollary 1]{furman2016tail}.
For Student-t distributions, it is 
\[
    \lambda_L
    = \lambda_U
    = 2-2 t_{\nu+1}\of{\sqrt{\nu+1} \frac{\sqrt{1-\rho}}{\sqrt{1+\rho}}}
,\]
see \cite[Section 5.3]{embrechts2001modelling}.

\subsection{Computations for the unclassified copula families in Table \ref{tab:non_arch_results}}
\label{subsec:unclassified_copula_families}

Here, we discuss the Fréchet, Mardia, Farlie-Gumbel-Morgenstern, Plackett and Raftery copula families, which are important examples for copula families that don't fit into the elliptical, Archmimedean, or extreme-value case.

\subsubsection{CI/CD and $\TP$}\label{subsubsec:unclassified_ci_and_TP2}

The CI/CD and $\TP$ columns for the unclassified copula families in Table \ref{tab:non_arch_results} are justified by the following references and computations:
\begin{itemize}
    \item Fréchet ($0\leq \alpha,\beta\leq 1$, $\alpha+\beta\geq 1$): CI holds if and only if $\beta = 0$, and it is $\TP$ if and only if $\alpha=1$ and $\beta=0$, see \cite[Example 3.5]{fuchs2023total}.
    Similarly, CD holds if and only if $\alpha = 0$.
    \item Mardia ($-1\leq \theta\leq 1$): CI and $\TP$ hold if and only if $\theta =1$ as the Mardia copula family is a special case of the Fréchet copula family with the correspondence
    \begin{align}\label{frm:mardia_frechet_correspondence}
        \alpha = \frac{\theta^2(\theta+1)}{2}, \quad \beta=\frac{\theta^2(\theta-1)}{2}
    .\end{align}
    Likewise, CD holds if and only if $\theta = -1$.
    \item Farlie-Gumbel-Morgenstern ($-1\leq \theta\leq 1$):
    The CI property holds if and only if $\theta\geq 0$, as 
    \[
        \frac{\partial}{\partial v} C(u, v)
        = u + \theta u (1-u)(1-2v)
    \]
    is clearly non-increasing in $v$ for any choice of $u$ if and only if $\theta\geq 0$.
    Likewise, this copula family is CD if and only if $\theta\leq 0$.
    \cite[Theorem 3]{amblard2002symmetry} applied for the concave function $\psi(x) = x(1-x)$ yields the $\TP$ property for $\theta > 0$.
    For $\theta = 0$ we obtain the product copula $\Pi$, which is also $\TP$.
    \item Plackett ($0 < \theta$):
    The Plackett copula is CI if and only if $\theta \geq 1$, and it is CD if and only if $\theta\leq 1$, compare \cite[Example 5.16]{Nelsen-2006}.
    Regarding $\TP$, note that if $c^{\text{Pl}}_\theta$ denotes the density of $C^{\text{Pl}}_\theta$, it then is
    \begin{align*}
        \log c^{\text{Pl}}_\theta(u,v)
        \quad = \quad & \phantom{}\log{\left(\theta \right)} + \log{\left(2 (1-\theta) u v + (\theta-1) (u + v) + 1 \right)} \\
        & - \frac32 \log{\left(- 4 \theta u v \left(\theta - 1\right) + \left(\left(\theta - 1\right) \left(u + v\right) + 1\right)^{2} \right)}
    \end{align*}
    on $(0,1)^2$.
    The $\TP$-property states that $\log c$ is 2-increasing on $(0,1)^2$.
    Since $\log c$ is twice differentiable and by the Topkis Characterization Theorem (compare \cite{milgrom1990rationalizability}), this holds if and only if $\frac{\partial^2\log c}{\partial u \partial v} \geq 0$ on $(0,1)^2$.
    The latter evaluates to
    \small
    \begin{equation}
        \label{frm:TP2_for_plackett_explicit}
        \frac{\partial^2}{\partial u \partial v}\log c(u,v)
        = \frac{f(u,v, \theta)}{\left((\theta-1) (u + v - 2uv) + 1\right)^{2} \left(- 4 \theta u v \left(\theta - 1\right) + \left(\left(\theta - 1\right) \left(u + v\right) + 1\right)^{2}\right)^{2}} 
    ,\end{equation}
    \normalsize
    with 
    \small
    \begin{align*}
        f(u,v, \theta) ~:=~ & (\theta-1)(2 v - 1) \left(- 4 \theta u v \left(\theta - 1\right) + \left(\left(\theta - 1\right) \left(u + v\right) + 1\right)^{2}\right) \\
        & + 2 (\theta-1)(2 u - 1) \left(- 4 \theta u v \left(\theta - 1\right) + \left(\left(\theta - 1\right) \left(u + v\right) + 1\right)^{2}\right) \\
        & + 6 \left(- \theta^{2} u + (\theta-1)^{2} v + \theta + u - 1\right) \left((\theta-1) (u + v - 2uv) + 1\right) \\
        & \left((\theta-1)^{2} u - \theta^{2} v + \theta + v - 1\right) \left((\theta-1) (u + v - 2uv) + 1\right) \\
        & + \left[
            3 u^2 (\theta - 1)^2 + 2 u (\theta - 1) ((v - 2) \theta + v + 1) - v (\theta - 1) (2 v (\theta - 1) - \theta + 3) - \theta - 1 \right] \\
        &(-2) ( \theta - 1)\left((\theta-1) (u + v - 2uv) + 1\right)
        \left(- 4 \theta u v \left(\theta - 1\right) + \left(\left(\theta - 1\right) \left(u + v\right) + 1\right)^{2}\right) \\
        & - (\theta-1)(2 u - 1)\left(- 4 \theta u v \left(\theta - 1\right) + \left(\left(\theta - 1\right) \left(u + v\right) + 1\right)^{2}\right)  \\
        & - 3 \left(- \theta^{2} u + (\theta-1)^{2} v + \theta + u - 1\right) \left((\theta-1) (u + v - 2uv) + 1\right)
    .\end{align*}
    \normalsize
    The denominator in \eqref{frm:TP2_for_plackett_explicit} is always positive, so we can focus on the positiveness of $f$.
    At $(u, v) = (0, 1)$, the expression for $f$ simplifies to $f(u,v, \theta) = -4 + 12/\theta - 8/\theta^2$, which is negative for $\theta > 2$.
    By continuity, if we let $v<1$ sufficiently close to $1$ and $u>0$ sufficiently close to $0$, the expression will also be negative for $\theta > 2$, and such a pair qualifies for contradicting the $\TP$ property.
    Hence, the $\TP$ property fails to hold for $\theta \in (2, \infty)$.
    For $\theta\in[1, 2]$, our numerical checks indicate that the copula family is $\TP$.
    \item Raftery ($0\leq \delta\leq 1$):
    A density does not exist for any choice of $\delta$, so the $\TP$ property fails to hold.
    CI certainly holds for $C_{0}^{\text{Ra}}(u,v)= \Pi$ and $C_{1}^{\text{Ra}}(u,v)=M$.
    For all other $\delta$, note that due to symmetry of the Raftery copulas, it is sufficient to check CIS.
    For that, let $(U,V)\sim C_{\delta}^{\text{Ra}}$, and observe that for $v>0$ and $u\neq v$, it is
    \begin{align}
        \partial_2 C_{\delta}^{\text{Ra}}(u,v)
        \quad = \quad & \frac{\left(u v\right)^{\frac{1}{1 - \delta}} \1_{\setof{u\leq v}}}{\max\left(u, v\right)^{\frac{2}{1 - \delta}}} + \1_{\setof{u \geq v}} + \frac{u^{\frac{1}{1 - \delta}} v^{\frac{\delta}{1 - \delta}} \left(1 - \max\left(u, v\right)^{\frac{- \delta - 1}{1 - \delta}}\right)}{\delta + 1} \nonumber\allowdisplaybreaks\\
        = \quad & \begin{cases}
            \frac1{1+\delta}u^{\frac{1}{1 - \delta}}\inbrackets{\delta v^{-\frac{1}{1 - \delta}} + v^{\frac{\delta}{1 - \delta}}} & \text{ if } u < v \\
            1 + \frac{u^{\frac{1}{1-\delta}} - u^{-\frac{\delta}{1-\delta}}}{\delta + 1}v^{\frac{\delta}{1-\delta}}& \text{ if } u > v
        \end{cases} \label{frm:raf}
    .\end{align}
    The first expression in \eqref{frm:raf} is non-increasing in $v$, because
    \[
        \frac{\partial}{\partial v} \inbrackets{\delta v^{-\frac{1}{1 - \delta}} + v^{\frac{\delta}{1 - \delta}}}
        = \frac{\delta}{(1-\delta)v} \inbrackets{v^{\frac{\delta}{1 - \delta}}- v^{-\frac{1}{1 - \delta} }}
    \]
    is negative for $v\in(0,1)$ and $\delta\in(0,1)$.
    The second expression is also non-increasing in $v$, because $u^{\frac{1}{1-\delta}} \leq u^{-\frac{\delta}{1-\delta}}$.
    Also note that both, the first expression of \eqref{frm:raf} as $u\nearrow v$ and the second expression of \eqref{frm:raf} as $u\searrow v$, converge to $\frac{\delta}{1+\delta} + \frac{u^{\frac{1+\delta}{1-\delta}}}{1+\delta}$, which shows that $P(U\leq u|V=v)$ is non-increasing in $v$ on the interval $(0, 1]$.
    Since $u\in[0, 1]$ was arbitrary, the CI property follows.
\end{itemize}

\subsubsection{Lower orthant ordering and Schur ordering}\label{subsubsec:unclassified_lo_ordering}

The Fréchet copula family is trivially lower orthant increasing when fixing either $\alpha$ or $\beta$.
The Farlie-Gumbel-Morgenstern, Plackett and Raftery copula families are all lower orthant increasing, see \cite[Exercise 3.22]{Nelsen-2006}, \cite[Exercise 3.37]{Nelsen-2006} and \cite[Chapter 5.1]{Joe-1997}, respectively.
Lastly, the Mardia copula family is not lower orthant ordered, see \cite[Example 2.8]{pfeifer2004modeling}.

\subsubsection{Tail dependencies}\label{subsubsec:unclassified_tail_dependencies}

The tail dependence coefficients for the Fréchet copula family are found in, e.g., \cite[Exercise 2.4]{Nelsen-2006}.
From that, one obtains the tail dependence coefficients for the Mardia copula family via \eqref{frm:mardia_frechet_correspondence}.
The tail dependence coefficients for the Plackett and Raftery copula families are found in, e.g., \cite[Exercise 5.21]{Nelsen-2006}.
Lastly, the tail dependence coefficients for the Farlie-Gumbel-Morgenstern copula family directly evaluate to 
\[
    \lambda_L
    = \lim_{t\rightarrow 0} \frac{C(t,t)}{t}
    = \lim_{t\rightarrow 0} t + \theta t (1-t)^2
    = 0
,\]
and likewise $\lambda_U = 0$ for all $\theta$.

\subsection{Computations for Chatterjee's xi, Spearman's rho and Kendall's tau in Table \ref{tab:all_families_rho_and_tau}}
\label{subsec:computations_for_xi_rho_and_tau}

In the sequel, we justify all entries in Table \ref{tab:all_families_rho_and_tau}, either by reference or by computation.
For the computations, we leverage the integral formulas given in  \eqref{frm_rho_integral}, \eqref{frm_tau_integral}, and \eqref{frm_xi_integral} above.

\subsubsection{Archimedean copulas}
\label{subsubsec:archimedean_spearman_and_kendall}

In this subsection, we cover dependence measures for a number of Archimedean copula families, namely the Clayton, Ali-Mikhail-Haq, Gumbel-Hougaard, Frank, Nelsen7 and Gumbel-Barnett copula families.

\begin{itemize}
    \item Clayton ($0<\theta$): 
    The formula for Spearman's rho can be found in \cite[Example 5.4]{Nelsen-2006}.
    Regarding Chatterjee's xi, notice that the first partial derivative is 
    \[
        \partial_1 C^{\text{Cl}}_\theta(u, v)
        = v^{\theta + 1} \left(- u^{\theta} v^{\theta} + u^{\theta} + v^{\theta}\right)^{- \frac{\theta + 1}{\theta}}
    ,\]
    which satisfies
    \small
    \[
        \int(\partial_1 C^{\text{Cl}}_\theta(u, v))^2\de u
        = u v^{2} \left(u^{\theta} \left(v^{- \theta}-1\right) + 1\right)^{\frac{2}{\theta}} \left(v^{\theta} - u^{\theta} \left(v^{\theta} - 1\right) \right)^{- \frac{2}{\theta}} {{}_{2}F_{1}\left(\begin{matrix} 2 + \frac{2}{\theta}, \frac{1}{\theta} \\ 1 + \frac{1}{\theta} \end{matrix}\middle| {u^{\theta} v^{- \theta} (v^{\theta} - 1)} \right)}
    .\]
    \normalsize
    From this, one obtains
    \small
    \begin{align*}
        \xi\of{C^{\text{Cl}}_{\theta}}
        = 6 \int_0^1\int_0^1 v^{2 \theta + 2} \left(- u^{\theta} v^{\theta} + u^{\theta} + v^{\theta}\right)^{-2 - \frac{2}{\theta}} \de u \de v - 2
        = 6 \int\limits_{0}^{1} {{}_{2}F_{1}\left(\begin{matrix} \frac{1}{\theta}, 2 + \frac{2}{\theta} \\ 1 + \frac{1}{\theta} \end{matrix}\middle| {1 - v^{- \theta}} \right)}\, \de v - 2
    ,\end{align*}
    \normalsize
    where ${}_{2}F_{1}$ is the hypergeometric function given by
    \begin{align}\label{frm:hypergeometric_function}
        { }_2 F_1(a, b ; c, z)
        :=\frac{\Gamma(c)}{\Gamma(a) \Gamma(c-a)} \int_0^1 v^{a-1}(1-v)^{c-a-1}(1-v z)^{-b} d v
    ,\end{align}
    where $\Gamma$ is the gamma function.
    \item Ali-Mikhail-Haq ($-1\leq \theta < 1$):
    The formulas for Spearman's rho and Kendall's tau are given in \cite[Exercise 5.10]{Nelsen-2006}.
    For Chatterjee's xi, observe that
    \[
        \partial_1 C^{\text{AMH}}_{\theta}(u,v) = \frac{v \left(\theta u \left(v - 1\right) - \theta \left(u - 1\right) \left(v - 1\right) + 1\right)}{\left(\theta \left(u - 1\right) \left(v - 1\right) - 1\right)^{2}}
    ,\]
    from which we get for $\theta\neq 0$
    \begin{align*}
        \xi\of{C^{\text{AMH}}_{\theta}}
        \quad = \quad & 6 \int_0^1\int_0^1 \inbrackets{\frac{v \left(\theta u \left(v - 1\right) - \theta \left(u - 1\right) \left(v - 1\right) + 1\right)}{\left(\theta \left(u - 1\right) \left(v - 1\right) - 1\right)^{2}}}^2 \de u \de v - 2 \\
        \quad = \quad & 6 \int_0^1 \frac{v^{2} \left(\frac{\theta^{2} v^{2}}{3} - \frac{2 \theta^{2} v}{3} + \frac{\theta^{2}}{3} + \theta v - \theta + 1\right)}{\theta v - \theta + 1}
        \de v - 2\\
        \quad = \quad & 6\left(\frac{\theta^{3} \left(8 - \theta\right) + 18 \theta^{2} - 12 \theta - 12 \left(\theta - 1\right)^{2} \log{\left(1 - \theta \right)}}{36 \theta^{3}}
        \right) - 2 \\
        = \quad & - \frac{\theta}{6} - \frac{2}{3} + \frac{3}{\theta} - \frac{2}{\theta^{2}} - \frac{2 \left(\theta - 1\right)^{2} \log{\left(1 - \theta \right)}}{\theta^{3}}
    .\end{align*}
    As $\theta\rightarrow 0$, the computed formula converges to $\xi\of{C^{\text{AMH}}_{0}} = 0$.
    \item Frank ($\theta\in\R$): The formulas for Spearman's rho and Kendall's tau are given in \cite[Exercise 5.9]{Nelsen-2006}.
    \item Nelsen7 ($0\leq\theta\leq 1$): The cases $\theta=0$ and $\theta=1$ are immediate.
    For $\theta\in(0, 1)$, we have
    \begin{align*}
        \rho_S\of{C^{\text{N7}}_{\theta}}
        \quad = \quad & 12 \int_0^1\int_0^1 \left(\theta u v - \left(\theta - 1\right) \left(u + v - 1\right)\right)_+ \de u \de v - 3 \\
        = \quad & 12 \int_0^1 \frac{v^{2}}{2 \left(\theta v - \theta + 1\right)} \de v - 3 \\
        = \quad & 12 \frac{3 \theta^{2} - 2 \theta - 2 \left(\theta - 1\right)^{2} \log{\left(1 - \theta \right)}}{4 \theta^{3}} - 3 \\
        = \quad & -3 + \frac{9}{\theta} - \frac{6}{\theta^{2}} - \frac{6 \left(\theta - 1\right)^{2} \log{\left(1 - \theta \right)}}{\theta^{3}}
    \end{align*}
    and
    \[
        \tau\of{C^{\text{N7}}_{\theta}}
        = 1 + 4\int_0^1 \frac{\left(t \theta - \theta + 1\right) \log{\left(t \theta - \theta + 1 \right)}}{\theta} \de t
        = 2 - \frac{2}{\theta} - \frac{2 \left(\theta - 1\right)^{2} \log{\left(1 - \theta \right)}}{\theta^{2}}
    .\]
    In the limiting cases, $\rho_S\of{C^{\text{N7}}_{0}} = \tau\of{C^{\text{N7}}_{0}} = -1$ and $\rho_S\of{C^{\text{N7}}_{1}} = \tau\of{C^{\text{N7}}_{1}} = 0$.
    The graph of Chatterjee's rank correlation for the Nelsen7 family obtained by simulations shows perfect negative linear relationship dependence between the rank correlation and the copula parameter $\theta$.
    Indeed, the theoretical rank correlation evaluates to
    \begin{align*}
        \xi\of{C^{\text{N7}}_{\theta}}
        \quad = \quad & 6 \int_0^1\int_0^1 (\theta v + 1 - \theta)^2(\theta u v + (1-\theta)(u+v-1))_+ \de u \de v - 2 \allowdisplaybreaks\\
        = \quad & 6 \int_0^1 (\theta v + 1 - \theta)^2\inbrackets{1 - \frac{(1-\theta)(1-v)}{1-\theta + v}} \de v - 2 \allowdisplaybreaks\\
        = \quad & 6 \int_0^1 \theta v^2 + v - \theta v \de v - 2 \\
        = \quad & 1 - \theta
    .\end{align*}
    \item Gumbel-Barnett ($0\leq \theta\leq 1$): The formula for Chatterjee's xi is given in \cite[Example 1]{dette2013copula}.
\end{itemize}

\subsubsection{Extreme-value copula families}
\label{subsubsec:ev_spearman_and_kendall}

The formulas for Spearman's rho and Kendall's tau for the Gumbell-Hougaard copula family are found in \cite[Example 4.2]{hurlimann2004properties} and \cite[Example 5.4]{Nelsen-2006}.
For the Marshall-Olkin copula family, the formulas for Spearman's rho and Kendall's tau can be found, e.g., in \cite[Example 5.7 a), Example 5.9 c)]{Nelsen-2006}.
Chatterjee's rank correlation $\xi$ with $\alpha_1 \neq 1/2$ evaluates to
\begin{align*}
    \xi\of{C^{\text{MO}}_{\delta}}
    \quad = \quad & 6 \int_0^1\int_0^1 \frac{\left(u v^{1 - \alpha_{2}} \1_{\setof{u v^{1 - \alpha_{2}} \leq u^{1 - \alpha_{1}} v}} - u^{1 - \alpha_{1}} v \left(\alpha_{1} - 1\right) \1_{\setof{u v^{1 - \alpha_{2}} \geq u^{1 - \alpha_{1}} v}}\right)^{2}}{u^{2}} \de u \de v - 2 \allowdisplaybreaks\\
    = \quad & 6\int_0^1\int_0^{v^{\frac{\alpha_2}{\alpha_1}}} \frac{\left(u v^{1 - \alpha_{2}}\right)^{2}}{u^{2}} \de u \de v
    + \int_0^1\int_{v^{\frac{\alpha_2}{\alpha_1}}}^1 \frac{\left(u^{1 - \alpha_{1}} v \left(\alpha_{1} - 1\right)\right)^{2}}{u^{2}} \de u \de v - 2 \allowdisplaybreaks\\
    = \quad & 6 \int\limits_{0}^{1}  v^{- 2 \alpha_{2} + 2 + \frac{\alpha_{2}}{\alpha_{1}}} \de v
    + 6 \int\limits_{0}^{1}\frac{v^2\left(\alpha_{1} - 1\right)^{2} \left(v^{\frac{\alpha_{2}}{\alpha_{1}}-2\alpha_{2}}-1 \right)}{2 \alpha_{1} - 1}
    \de v - 2 \\
    = \quad & 6 \int_0^1 \frac{\alpha_{1}^{2} v^{2+\frac{\alpha_{2}}{\alpha_{1}}- 2 \alpha_{2}} - v^2(\alpha_1-1)^2}{2 \alpha_{1} - 1}
    \de v - 2 \\
    = \quad & \frac{2 \alpha_{1}^{2} \alpha_{2}}{3 \alpha_{1} + \alpha_{2} - 2 \alpha_{1} \alpha_{2}}
\end{align*}
and for $\alpha_1=1/2$ to
\begin{align*}
    \xi\of{C^{\text{MO}}_{\delta}}
    \quad = \quad & 6 \int_0^1\int_0^1 \frac{\left(\frac12 \sqrt{u} v \1_{\setof{\sqrt{u} v \leq u v^{1 - \alpha_{2}}}} + u v^{1 - \alpha_{2}} \1_{\setof{\sqrt{u} v \geq u v^{1 - \alpha_{2}}}}\right)^{2}}{u^{2}} \de u \de v - 2 \\
    = \quad & 6\int_0^1\int_0^{v^{\frac{\alpha_2}{2}}} v^{2 - 2 \alpha_{2}} \de u \de v
    + \frac32\int_0^1\int_{v^{\frac{\alpha_2}{2}}}^1 \frac{v^{2}}{u} \de u \de v - 2 \allowdisplaybreaks\\
    = \quad & 6 \int\limits_{0}^{1}  v^2 \de v
    - \frac32 \int\limits_{0}^{1} v^{2} \log{\left(v^{2\alpha_{2}} \right)}
    \de v - 2 \\
    = \quad & \frac{\alpha_2}3
.\end{align*}
Hence,
\[
    \xi\of{C^{\text{MO}}_{\alpha_1, \alpha_2}} = \frac{2 \alpha_{1}^{2} \alpha_{2}}{3 \alpha_{1} + \alpha_{2} - 2 \alpha_{1} \alpha_{2}}
\] 
for all $\alpha_1,\alpha_2\in[0, 1]$.
This generalizes the formula $\xi\of{C^{\text{MO}}_{1, \alpha_2}} = \frac{2 \alpha_{2}}{3 - \alpha_{2}}$ given in \cite[Example 4]{fuchs2021quantifying}.
Furthermore, letting $\alpha_1=\alpha_2$, we also obtain the closed-form formula
\[
    \xi\of{C^{\text{CA}}_{\delta}} = \frac{\delta^2}{2-\delta}
\]
for the Cuadras-Augé copula family as a special case.

\subsubsection{Elliptical copula families}
\label{subsubsec:ellipticals_spearman_rho_and_kendalls_tau}

A general formula for Kendall's tau of elliptical copulas is given in \cite[Theorem 5.4]{embrechts2001modelling}.
The formula for Spearman's rho for the Gaussian copula family can be found in \cite[Theorem 5.36]{Embrechts-2015} or \cite{heinen2020spearman} and for the Student-t and Laplace copula families in \cite[Proposition 1]{heinen2020spearman}.
Note that \cite[Proposition 4 and Remark 2]{heinen2020spearman} also give longer, more explicit formulas for Spearman's rho of the Student-t and the Laplace copula families.
The formula for Chatterjee's xi in the Gassian case is given e.g. in \cite[Example 4]{fuchs2021quantifying}.

\subsubsection{Unclassified copula families}
\label{subsubsec:unclassified_spearman_rho_and_kendalls_tau}

The in Table \ref{tab:all_families_rho_and_tau} stated formulas for Spearman's rho and Kendall's tau of unclassified copula families are found in \cite[Example 5.2 and 5.7 a)]{Nelsen-2006} for the Farlie-Gumbel-Morgenstern, in \cite[Example 5.3 and 5.6]{Nelsen-2006} for the Fréchet (and the Mardia), in \cite[Exercise 5.8]{Nelsen-2006} for the Plackett, and in \cite[Excercise 5.11]{Nelsen-2006} for the Raftery copula family. \\
The in Table \ref{tab:all_families_rho_and_tau} stated formulas for Chatterjee's xi of unclassified copula families are found in \cite[Example 4]{fuchs2021quantifying} for the Farlie-Gumbel-Morgenstern and the Fréchet coupla family, and from the latter one directly obtains the formula for the Mardia copula family.

\normalsize

\end{document}